\definecolor{yqyqyq}{rgb}{0.5019607843137255,0.5019607843137255,0.5019607843137255}\definecolor{uuuuuu}{rgb}{0.26666666666666666,0.26666666666666666,0.26666666666666666}
\definecolor{uququq}{rgb}{0.25098039215686274,0.25098039215686274,0.25098039215686274}
\definecolor{wwwwww}{rgb}{0.4,0.4,0.4}
\definecolor{uuuuuu}{rgb}{0.26666666666666666,0.26666666666666666,0.26666666666666666}
\setlist[itemize]{leftmargin=6mm}
\renewcommand{\P}{\mathbb P}
\DeclareMathOperator{\codim}{codim}
\newcommand{\Aut}{\operatorname{Aut}}
\newcommand{\PsAut}{\operatorname{PsAut}}
\DeclareMathOperator{\Cl}{Cl}
\DeclareMathOperator{\lin}{lin}
\DeclareMathOperator{\Hilb}{Hilb}
\DeclareMathOperator{\mult}{mult}
\DeclareMathOperator{\Hom}{Hom}
\DeclareMathOperator{\Exc}{Exc}
\DeclareMathOperator{\Sing}{Sing}
\DeclareMathOperator{\Eff}{Eff}
\DeclareMathOperator{\Nef}{Nef}
\DeclareMathOperator{\Mov}{Mov}
\DeclareMathOperator{\Pic}{Pic}
\DeclareMathOperator{\rank}{rank}
\renewcommand{\sec}{\mathbb{S}ec}
\DeclareMathOperator{\Sym}{Sym}
\DeclareMathOperator{\Cox}{Cox}
\DeclareMathOperator{\diag}{diag}
\renewcommand{\P}{\mathbb{P}}
\newcommand{\xn}{X_{2r}}
\newcommand{\xnb}{\mathcal{S}_{2r}}
\newcommand{\spn}{Sp(2r)}
\newcommand{\vr}{\mathcal{V}^{2r-1}_2}
\newcommand{\mm}{M_{2r,2r}(K)}
\newcommand{\glnm}{GL(r)}
\newcommand{\sod}{SO(2)}
\newcommand{\son}{SO(2r)}
\newcommand{\gln}{GL(2r)}
\newcommand{\mmle}{\overline{M}_{0,0}(LG(2,4),2)}
\newcommand{\mml}{\overline{M}_{0,0}(LG(r,2r),2)}
\newtheorem{thm}{Theorem}[section]
\newtheorem{Lemma}[thm]{Lemma}
\newtheorem{Proposition}[thm]{Proposition}
\newtheorem{Corollary}[thm]{Corollary}
\theoremstyle{definition}
\newtheorem{Definition}[thm]{Definition}
\newtheorem{Remark}[thm]{Remark}
\newtheorem{Example}[thm]{Example}
\newtheorem{Notation}[thm]{Notation}
\newtheorem{Construction}[thm]{Construction}
\pgfplotsset{compat=1.15}
\begin{document}

\title[\resizebox{6.1in}{!}{Complete symplectic quadrics and Kontsevich spaces of conics in Lagrangian Grassmannians}]{Complete symplectic quadrics and Kontsevich spaces of conics in Lagrangian Grassmannians}

\author[Elsa Corniani]{Elsa Corniani}
\address{\sc Elsa Corniani\\ Dipartimento di Matematica e Informatica, Universit\`a di Ferrara, Via Machiavelli 30, 44121 Ferrara, Italy}
\email{elsa.corniani@unife.it}

\author[Alex Massarenti]{Alex Massarenti}
\address{\sc Alex Massarenti\\ Dipartimento di Matematica e Informatica, Universit\`a di Ferrara, Via Machiavelli 30, 44121 Ferrara, Italy}
\email{alex.massarenti@unife.it}

\date{\today}
\subjclass[2010]{Primary 14M27, 14E30; Secondary 14J45, 14N05, 14E07}
\keywords{Wonderful compactifications; Mori dream spaces; Cox rings; Spherical varieties; Stable maps}

\maketitle

\begin{abstract}
A wonderful compactification of an orbit under the action of a semi-simple and simply connected group is a smooth projective variety containing the orbit as a dense open subset, and where the added boundary divisor is simple normal crossing. We construct the wonderful compactification of the space of symmetric and symplectic matrices, and investigate its geometry. As an application, we describe the birational geometry of the Kontsevich spaces parametrizing conics in Lagrangian Grassmannians. 
\end{abstract}

\setcounter{tocdepth}{1}


\section{Introduction}
The \textit{wonderful compactification} of a symmetric space was introduced by C. De Concini and C. Procesi in \cite{DP83}. Later on, D. Luna gave a more general definition of wonderful variety and then he proved that, according to his definition, all wonderful varieties are spherical \cite{Lu96}. 

Let $\mathscr{G}$ be a reductive group, and $\mathscr{B}\subset\mathscr{G}$ a Borel subgroup. A \textit{spherical variety} is a variety admitting an action of $\mathscr{G}$ with an open dense $\mathscr{B}$-orbit. For \textit{wonderful varieties} we require in addition the existence of an open orbit whose complementary set is a simple normal crossing divisor $D_1\cup\dots\cup D_r$, where the $D_i$ are the $\mathscr{G}$-invariant prime divisors in $X$. The number $r$ is called the rank of $X$. Note that $\mathscr{G}$ has $2^r$ orbits in $X$ given by all the possible intersections among the $D_i$. The unique closed orbit is $\bigcap_{i=1}^rD_i$. 

Apart from their role in group theory, wonderful varieties proved themselves important in enumerative geometry and recently also in birational geometry. We refer to \cite{BL11}, \cite{Pe14}, \cite{Pe18} for comprehensive treatments of these topics.

Classical examples of wonderful varieties are the spaces of complete quadrics and of complete collineations. These spaces have been studied both from the geometrical and enumerative point of view \cite{Se48}, \cite{Se51}, \cite{Se52}, \cite{Ty56}, \cite{Va82}, \cite{Va84}, \cite{TK88}, \cite{LLT89}, \cite{Tha99}. An aspect that will be fundamental in this paper is that spaces of complete quadrics and collineations play a role in the study of other moduli spaces such as Hilbert schemes and Kontsevich spaces of stable maps \cite{Al56}, \cite{Pi81}, \cite{Ca16}. The birational geometry of the spaces of complete quadrics and collineations, mostly from the point of view of Mori theory, has recently been studied in \cite{Ce15}, \cite{Ma18a}, \cite{Ma18b}.

The spaces of complete collineations and quadrics have been constructed, as a sequence of blow-ups, by I. Vainsencher in \cite{Va84}, \cite{Va82}, and a similar construction for complete skew-forms has been carried out by M. Thaddeus in \cite{Tha99}. In this paper we construct the wonderful compactification of the space of symmetric and symplectic matrices. More precisely, we summarize our main results in Propositions \ref{orb_dim}, \ref{fun}, and Theorem \ref{main1} as follows:

\begin{thm}\label{A}
Let $\mathbb{P}^N$ be the projective space parametrizing $2r\times 2r$ symmetric matrices modulo scalar, consider the following $\spn$-action: 
$$
\begin{array}{ccc}
\spn\times \mathbb{P}^{N} & \longrightarrow & \mathbb{P}^{N}\\
(M,Z) & \longmapsto & MZM^{t}
\end{array}
$$
and denote by $X_{2r}\subset\mathbb{P}^N$ the closure of the $\spn$-orbit of the identity. Then $X_{2r}$ admits a stratification 
$$Y_1\subset Y_2\subset\dots Y_r\subset X_{2r}$$
where the variety $Y_k$ parametrizes matrices in $X_{2r}$ of rank at most $k$, $\dim(Y_k) = 2rk+k-k^2-1$ for $k = 1,\dots,r$, and $\dim(X_{2r}) = r(r+1)$.

Furthermore, consider the following sequence of blow-ups 
$$\mathcal{S}_{2r}:=X_{2r}^{(r-1)}\rightarrow X_{2r}^{(r-2)}\rightarrow X_{2r}^{(r-3)}\rightarrow\dots\rightarrow X_{2r}^{(1)}\rightarrow X_{2r}^{(0)}:=X_{2r}$$
where $X_{2r}^{(k)}\rightarrow X_{2r}^{(k-1)}$ is the blow-up of the strict transform of $Y_k$ in $X_{2r}^{(k-1)}$ for $k = 1,\dots, r-1$. Denote by $E_k\subset\mathcal{S}_{2r}$ the exceptional divisor over $Y_k$ for $k=1,\dots,r-1$, and by $S_r^{(r-1)}(\mathcal{V}_2^{2r-1})$ the strict transform of the divisor $Y_r\subset X_{2r}$. Then $E_1,\dots,E_{r-1},S_r^{(r-1)}(\mathcal{V}_2^{2r-1})$ are smooth and intersect transversally. Furthermore, the closures of the orbits of the $Sp(2r)$-action on $\mathcal{S}_{2r}$ induced by the $\spn$-action above are given by all the possible intersections among $E_1,\dots,E_{r-1},S_r^{(r-1)}(\mathcal{V}_2^{2r-1})$ and $\mathcal{S}_{2r}$ itself. Therefore $\mathcal{S}_{2r}$ is wonderful. 
\end{thm}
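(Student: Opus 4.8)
The plan is to prove the three assertions of the theorem—smoothness and transversality of the boundary divisors, the orbit–stratum correspondence, and the conclusion that $\mathcal{S}_{2r}$ is wonderful—by a simultaneous induction on the blow-up sequence, using $\spn$-equivariance to reduce every local question to a single standard affine chart. The starting observation is that each center $Y_k$ is $\spn$-invariant, so every map $X_{2r}^{(k)}\to X_{2r}^{(k-1)}$ is equivariant; hence the $\spn$-action lifts through the whole tower and the induced action on $\mathcal{S}_{2r}$ in the statement is well defined. I would also record at the outset that $\xn$ is a \emph{proper} subvariety of the projective space of symmetric matrices: comparing $\dim\xn=r(r+1)$ with the dimension $2r^2+r-1$ of all symmetric matrices modulo scalar shows $\xn$ is cut out by nontrivial symplectic relations, and this is the source of the only real difficulty.

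For smoothness and transversality I would argue as follows. Locally near a point of the closed stratum, $\xn$ is modelled on the determinantal variety of symmetric $2r\times 2r$ matrices of bounded rank, intersected with the symplectic relations defining the orbit of the identity, and $Y_k$ corresponds to the rank-$\le k$ locus, of dimension $2rk+k-k^2-1$ as in Proposition \ref{orb_dim}. The inductive heart is the claim that, after blowing up $Y_1,\dots,Y_{k-1}$, the strict transform of $Y_k$ is smooth and meets the accumulated exceptional divisors transversally. I would verify this by writing a symmetric matrix in block form adapted to the rank filtration and passing to the standard affine charts of each successive blow-up: in these charts the strict transform of the next stratum, and each exceptional divisor $E_j$, are cut out by the vanishing of distinct coordinate functions, so every relevant locus is smooth and the boundary is visibly simple normal crossing. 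This is precisely the mechanism—familiar from the classical complete quadrics—by which blowing up the rank-$\le(k-1)$ locus separates the local branches of the rank-$\le k$ locus.

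Granting the smooth $\spn$-stable boundary $E_1,\dots,E_{r-1},\,S_r^{(r-1)}(\mathcal{V}_2^{2r-1})$, I would identify the $\spn$-orbits on $\mathcal{S}_{2r}$ with the strata cut out by these divisors. The point is that a point of $\mathcal{S}_{2r}$ remembers not merely a degenerate quadric $Q\in\xn$ but the full chain of successive directions of degeneration recorded by the exceptional data—a \emph{complete symplectic quadric}. Its combinatorial type is exactly the subset of boundary divisors through the point, and I would show that two complete symplectic quadrics of the same type are $\spn$-equivalent by exhibiting explicit group elements carrying one chart representative to another, matching the dimension of each intersection stratum against the orbit dimensions of Proposition \ref{orb_dim}. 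Wonderfulness then follows formally: the open orbit is the dense complement of the boundary; the boundary is simple normal crossing by the previous step; the $2^r$ orbits are in bijection with the intersection patterns of the $r$ invariant divisors $E_1,\dots,E_{r-1},S_r^{(r-1)}(\mathcal{V}_2^{2r-1})$; and the unique closed orbit is their common intersection, of minimal dimension. These are exactly Luna's axioms, so $\mathcal{S}_{2r}$ is wonderful of rank $r$.

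The genuine obstacle I expect is the inductive smoothness in the second step. Because $\xn$ is the symplectically constrained subvariety and not the full space of symmetric matrices, each stratum $Y_k$ inherits a local structure that is a constrained version of the determinantal one, and one must check that the extra symplectic relations are compatible with the blow-up charts so that the expected separation of branches persists. Controlling the interaction between the symmetric-matrix coordinates and the symplectic relations chart by chart—and confirming that no unexpected components or singularities appear in the strict transforms—is the technical core on which the orbit analysis and the verification of Luna's axioms both rest.
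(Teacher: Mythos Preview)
Your outline has the right architecture---induction along the blow-up tower, $\spn$-equivariance, and a local chart analysis---and you correctly identify that the singular locus of $X_{2r}$ (which is genuinely singular for $r\ge 3$) is where the real work lies. But the step you flag as ``the genuine obstacle'' is never discharged, and the mechanism you sketch for it does not quite work as stated.

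The claim that in blow-up charts ``the strict transform of the next stratum, and each exceptional divisor $E_j$, are cut out by the vanishing of distinct coordinate functions'' is exactly what needs proof, and for $X_{2r}$ it is not automatic: you are blowing up a \emph{singular} variety along a smooth center, so you cannot appeal to the standard normal-bundle description. The paper supplies the missing ingredient via an explicit tangent-cone computation (Proposition~\ref{symplcones}): at a point of $S_k(\mathcal{V}_2^{2r-1})\setminus S_{k-1}(\mathcal{V}_2^{2r-1})$, the tangent cone of $X_{2r}$ is a cone over $X_{2(r-k)}$, and the tangent cone of $S_h(\mathcal{V}_2^{2r-1})$ is a cone over $S_{h-k}(\mathcal{V}_2^{2(r-k)-1})$. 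This recursive self-similarity is the engine of the whole argument. It implies that the exceptional divisor $E_k$ fibers over its center with fibers $\mathcal{S}_{2(r-k)}$, which are smooth by induction on $r$; a separate lemma on fibrations with smooth equidimensional fibers over a smooth base (Proposition~\ref{smooth_fib}) then forces the total space to be smooth. The same fibration picture identifies the $\spn$-action on the exceptional fibers with the $Sp(2(r-k))$-action on the smaller model, which is how the orbit analysis is reduced to the inductive hypothesis rather than produced by ad~hoc group elements. The paper further embeds the whole tower into the complete-quadrics tower $\mathcal{Q}(2r-1)_{\bullet}$ of Construction~\ref{ccq} and leverages the already-established smoothness and transversality there (Proposition~\ref{Qn_won}). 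Without either the tangent-cone recursion or this ambient embedding, your chart-by-chart separation of branches is an assertion, not an argument.
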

We will call $\mathcal{S}_{2r}$ the space of \textit{complete symplectic quadrics} of dimension $2r-2$. By Proposition \ref{fun} $Y_k$ is the intersection of $X_{2r}$ with the secant variety $\sec_k(\mathcal{V}_2^{2r-1})$ that is the closure of the union of the $(k-1)$-planes generated by $k$ general points on the Veronese variety $\mathcal{V}_2^{2r-1}$ of degree two and dimension $2r-1$. 

Note that the formula for the dimension of $Y_k$ in Theorem \ref{A} yields that $\mathcal{V}_2^{2r-1}$ is entirely contained in $X_{2r}$, while for $r\geq 2$ the orbit closure $X_{2r}$ intersects $\sec_k(\mathcal{V}_2^{2r-1})$ in a proper subvariety. Furthermore, by Proposition \ref{sec} we have that set-theoretically $\sec_k(\mathcal{V}_2^{2r-1})\cap X_{2r} = \sec_r(\mathcal{V}_2^{2r-1})\cap X_{2r}$ for $k\geq r$. Interestingly, this means that if $M$ is a symmetric $2r\times 2r$ matrix that is a limit of a family of symplectic matrices then either $1\leq \rank(M)\leq r$ or $\rank(M) = 2r$.

For instance, by Proposition \ref{x4g14} $X_4$ is the Grassmannian $\mathbb{G}(1,4)$ of lines in $\mathbb{P}^4$. In this case by Theorem \ref{A} we have that $\mathcal{S}_4$ is the blow-up of $\mathbb{G}(1,4)$ along the Veronese $3$-fold $\mathcal{V}_2^3\subset\mathbb{G}(1,4)$. This is a wonderful variety of rank two. As remarked in \cite{Wa96} wonderful varieties of rank two are a building block in the theory of spherical varieties. The wonderful compactification $\mathcal{S}_4$ is the sixth variety in \cite[Table C]{Wa96}, and will be a central character throughout the whole paper.

\begin{Remark}\label{Rem}
The use of wonderful compactifications in enumerative geometry dates back to the solution of M. Chasles to a problem posed by J. Steiner asking how many conics in the plane are tangent to five given general conics \cite{KL80}. Steiner's answer, which then turned out to be wrong, was $6^5 = 7776$. Later on Chasles computed the right number which is $3264$.

Although enumerative problems are not within the scope of this paper, we give a simple application of our construction in enumerative geometry. It is well known that there are $92$ quadric surfaces in $\mathbb{P}^3$ that are tangent to nine general lines \cite[Remark 4.3]{BFS20}. The points of $\mathcal{S}_4$ in a divisor of class $2H-E_1$, where $H$ is the pull-back of the hyperplane class of $X_4$, correspond to the symplectic quadrics in $\mathbb{P}^3$ that are tangent to a general line. We have that $(2H-E_1)^6 = 40$. From the enumerative point of view this means that there are exactly $40$ symplectic quadrics in $\mathbb{P}^3$ that are tangent to six general lines.    
\end{Remark}

The variety $X_{2r}$ is singular for $r\geq 3$. The wonderful variety $\mathcal{S}_{2r}$ may be seen as an incarnation, in the singular setting, of the process producing a wonderful compactification from a conical one in \cite{MP98}. Furthermore, by Proposition \ref{symplcones} $\mathcal{S}_{2r}$ provides a resolution of a variety with conical singularities as remarked in \cite[Section 3.3]{MP98}.  

In Section \ref{divS2r} and \ref{birS2r} we take advantage of the spherical structure of $\mathcal{S}_{2r}$ to study its birational geometry from the point of view of Mori theory. Roughly speaking, a \textit{Mori dream space} is a projective variety $X$ whose cone of effective divisors $\Eff(X)$ admits a well-behaved decomposition into convex sets, called Mori chamber decomposition, and these chambers are the nef cones of birational models of $X$. These varieties, introduced by Y. Hu and S. Keel in \cite{HK00}, are named so because they behave in the best possible way from the point of view of the minimal model program. In general, to determine whether or not a variety is a Mori dream space, and in case to study in detail its Mori chamber decomposition is a hard problem. This has been done for instance when $X$ is obtained by blowing-up points in a projective space \cite{Mu01}, \cite{CT06}, \cite{AM16}, \cite{AC17}, \cite{BM17}, \cite{LP17}.

Spherical varieties are Mori dream spaces, we refer to \cite{Pe14} for a comprehensive treatment of these topics. Cox rings were first introduced by D. A. Cox for toric varieties \cite{Cox95}, and then his construction was generalized to projective varieties in \cite{HK00}. These algebraic objects are basically universal homogeneous coordinate rings of projective varieties, defined as the direct sum of the spaces of sections of all isomorphism classes of line bundles on them. We have that a normal $\mathbb{Q}$-factorial projective variety $X$, over an algebraically closed field, with finitely generated Picard group is a Mori dream space if and only if its Cox ring is finitely generated \cite[Proposition 2.9]{HK00}. Summing-up the results in Propositions \ref{pic_x2r}, \ref{eff_nef}, \ref{mcd_4} and Theorem \ref{dec_S6} we have the following: 

\begin{thm}\label{B}
Fix homogeneous coordinates $[z_{0,0}:\dots:z_{n,n}]$ on $\mathbb{P}^N$, and consider the blow-up $f:\mathcal{S}_{2r}\rightarrow X_{2r}\subset\mathbb{P}^N$ with exceptional divisors $E_1,\dots,E_{r-1}$ in Theorem \ref{A}. For $i=1,\dots,r$ we define the divisors $D_i$ as the strict transforms in $\mathcal{S}_{2r}$ of the divisor given by the intersection of  
$$\det \begin{pmatrix}
z_{0,0} & \dots & z_{0,i-1}\\
\vdots & \ddots & \vdots \\
z_{0,i-1} & \dots & z_{i-1,i-1}\\
\end{pmatrix}=0$$
with $X_{2r}$, and let $H$ be the pull-back of the hyperplane section of $X_{2r}\subset\mathbb{P}^N$ to $\mathcal{S}_{2r}$. 

The Picard rank of $\mathcal{S}_{2r}$ is $\rho(\mathcal{S}_{2r}) = r$ and $\Pic(\mathcal{S}_{2r})$ is generated by $H,E_1,\dots,E_{r-1}$. Furthermore, the effective cone $\Eff(\mathcal{S}_{2r})$ is generated by $E_1,\dots,E_{r-1},S_r^{(r-1)}(\mathcal{V}_2^{2r-1})$, the nef cone $\Nef(\mathcal{S}_{2r})$ is generated by $D_1,\dots,D_r$, and the Cox ring of $\mathcal{S}_{2r}$ is generated by the sections of $E_1,\dots,E_{r-1},S_r^{(r-1)}(\mathcal{V}_2^{2r-1}), D_1,\dots,D_r$.

Finally, the Mori chamber decomposition of the $\Eff(\mathcal{S}_{4})$ has three chambers, and the Mori chamber decomposition of the $\Eff(\mathcal{S}_{6})$ has nine chambers. 
\end{thm}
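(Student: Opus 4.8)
The plan is to combine the wonderful, hence spherical, structure of $\mathcal{S}_{2r}$ established in Theorem \ref{A} with the explicit blow-up description, treating the five assertions in turn. The organizing principle is that spherical varieties are Mori dream spaces, so that every cone in sight is rational polyhedral and the Cox ring is finitely generated; the work lies in identifying explicit generators and in the low-rank chamber count.

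First I would compute the Picard group. Since $X_{2r}$ is a spherical orbit closure whose Picard group is generated by the hyperplane class $H$ — transparent for $r=2$, where $X_4=\mathbb{G}(1,4)$ has $\rho=1$, and in general a consequence of the spherical structure — each blow-up $X_{2r}^{(k)}\to X_{2r}^{(k-1)}$ is performed along a smooth center of codimension at least two and therefore contributes exactly one new generator $E_k$. Iterating over the $r-1$ blow-ups of Theorem \ref{A} gives $\rho(\mathcal{S}_{2r})=1+(r-1)=r$ with basis $H,E_1,\dots,E_{r-1}$.

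For the effective and nef cones I would exploit the dictionary for spherical varieties. The effective cone is generated by the classes of the $B$-stable prime divisors, namely the $\spn$-invariant boundary divisors together with the colors; Theorem \ref{A} identifies the boundary divisors as $E_1,\dots,E_{r-1},S_r^{(r-1)}(\mathcal{V}_2^{2r-1})$, and these are $r$ classes spanning $\Pic(\mathcal{S}_{2r})_{\mathbb{R}}$. It then suffices to check that the colors lie in the simplicial cone on the boundary divisors, which I would do by expressing them through the determinantal divisors and the pullback relations; this forces $\Eff(\mathcal{S}_{2r})$ to be exactly that simplicial cone. Dually, the divisors $D_1,\dots,D_r$ are the strict transforms of the leading-minor loci of the statement, hence globally generated and nef; writing each $D_i$ in the basis $H,E_1,\dots,E_{r-1}$ and pairing against the curve classes contracted by the blow-downs $X_{2r}^{(k)}\to X_{2r}^{(k-1)}$ shows they span precisely the dual of the Mori cone, so $D_1,\dots,D_r$ generate $\Nef(\mathcal{S}_{2r})$. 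Finite generation of $\Cox(\mathcal{S}_{2r})$ is then automatic, and using Brion's description of the Cox ring of a spherical variety I would verify that the sections cutting out the $B$-stable prime divisors — the boundary coordinates attached to $E_1,\dots,E_{r-1},S_r^{(r-1)}(\mathcal{V}_2^{2r-1})$ together with the colors realized by $D_1,\dots,D_r$ — form a generating set.

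Finally, for the Mori chamber decompositions of $\mathcal{S}_4$ and $\mathcal{S}_6$ I would pass from the generators of $\Eff$ to an explicit wall-and-chamber analysis, each chamber being the nef cone of a birational model. In the rank-two case $\Eff(\mathcal{S}_4)$ is a two-dimensional cone, and locating the single interior divisorial or small contraction splits it into the predicted three chambers. In the rank-three case $\Eff(\mathcal{S}_6)$ is three-dimensional, and the nine chambers should emerge as the GIT chambers attached to the Cox ring, enumerated by running all admissible steps of the minimal model program visible from the generators and their relations. \emph{The main obstacle} is exactly this last step for $\mathcal{S}_6$: to pin down precisely nine chambers one needs either a full presentation of $\Cox(\mathcal{S}_6)$ with its relations, so that the variation-of-GIT wall-crossing can be computed directly, or explicit geometric models for all nine birational modifications together with the maps relating them. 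Extracting those relations and certifying that the count is exactly nine — neither more nor fewer — is the technically heaviest part of the argument.
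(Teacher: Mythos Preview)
Your outline captures the spherical-variety philosophy correctly, but there are two genuine gaps.

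\textbf{The Picard rank computation via blow-ups does not go through.} For $r\geq 3$ the variety $X_{2r}$ is singular precisely along the Veronese $\mathcal{V}_2^{2r-1}$ (this is the content of Proposition~\ref{symplcones}: the tangent cone at a Veronese point is a cone over $X_{2(r-1)}$, which is not linear). So the first blow-up $X_{2r}^{(1)}\to X_{2r}$ is not a blow-up of a smooth variety along a smooth center, and the naive ``Picard rank jumps by one'' fails; nor are the intermediate $X_{2r}^{(i)}$ known to be smooth. Your assertion that $\rho(X_{2r})=1$ ``as a consequence of the spherical structure'' is also unjustified: sphericity alone does not force Picard rank one. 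The paper avoids all of this by using Brion's exact sequence $0\to\mathbb{Z}^r\to\Pic(\mathcal{S}_{2r})\to\Pic(G/H)\to 0$ for a wonderful variety with $r$ boundary divisors, and then computing directly that $\Pic(G/H)\cong\mathbb{X}(H)\cong\mathbb{Z}/2\mathbb{Z}$ via an explicit analysis of the stabilizer $H$ and its commutator (Propositions~\ref{pic_G/H} and~\ref{pic_x2r}). This character computation is the actual substance here.

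\textbf{The $\mathcal{S}_6$ chamber count needs a concrete mechanism, and the paper provides one you did not identify.} The paper does not run the MMP or compute a full presentation of $\Cox(\mathcal{S}_6)$ with relations. Instead it first computes the classes $D_1\sim H$, $D_2\sim 2H-E_1$, $D_3\sim 3H-2E_1-E_2$, $S_3^{(2)}(\mathcal{V}_2^5)\sim 2H-2E_1-2E_2$ by careful intersection theory on the singular first blow-up (Lemma~\ref{rel_S6}); then it uses the embedding of any Mori dream space into a toric variety sharing its Picard group (Remark~\ref{toric}) so that the GKZ fan of the toric model, determined solely by the degrees of the six Cox-ring generators, refines the Mori chamber decomposition of $\mathcal{S}_6$ and gives nine as an \emph{upper} bound. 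The matching \emph{lower} bound comes from distinguishing stable base loci in each candidate chamber outside the movable cone, together with the fact that the nef cone (already identified as the cone on $D_1,D_2,D_3$) must be a single chamber. This two-sided squeeze is what pins down exactly nine; your proposal to enumerate all birational models directly, or to extract the Cox-ring relations, would be substantially harder and is not what the paper does.
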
        
We refer to Proposition \ref{mcd_4} and Theorem \ref{dec_S6} for a detailed description of the Mori chamber decompositions. 

In Section \ref{K_Grass} we investigate the birational geometry of Kontsevich moduli spaces of conics in Lagrangian Grassmannians. These spaces are denoted by $\overline{M}_{g,n}(X,\beta)$ where $X$ is a projective scheme and $\beta\in H_2(X,\mathbb{Z})$ is the homology class of a curve in $X$. A point in $\overline{M}_{g,n}(X,\beta)$ corresponds to a holomorphic map $\alpha$ from an $n$-pointed genus $g$ curve $C$ to $X$ such that $\alpha_{*}([C])=\beta$. If $X$ is a homogeneous variety then there exists a smooth, irreducible Deligne-Mumford stack $\overline{\mathcal{M}}_{0,n}(X,\beta)$ whose coarse moduli space is $\overline{M}_{0,n}(X,\beta)$ \cite{FP}. When $X$ is a Lagrangian Grassmannian the class $\beta$ is then completely determined by its degree and we will write $\beta = d[L]$, where $[L]$ is the class of a line in the Pl\"ucker embedding. The Mori theory of the spaces $\overline{M}_{0,n}(X,\beta)$, especially when the target variety is a projective space or a Grassmannian, has been widely investigated in a series of papers \cite{CS06}, \cite{Ch08}, \cite{CHS08}, \cite{CHS09}, \cite{CC10}, \cite{CC11}, \cite{CM17}.

On the Kontsevich space $\overline{M}_{0,0}(LG(r,2r),2)$ of conics in the Lagrangian Grassmannian, $LG(r,2r)$ parametrizing Lagrangian subspaces of a $2r$-dimensional symplectic vector space, we consider the divisor classes: $\Delta^r$ of maps with reducible domain, $T^r$ of conics tangent to a fixed hyperplane section of $LG(r,2r)$, $H^r_{\sigma_2}$ of conics intersecting a fixed codimension two Schubert variety $\Sigma_2^r\subset LG(r,2r)$, and $D_{unb}^r$ which we now define. A stable map $\alpha:\mathbb{P}^1\rightarrow LG(r,2r)$ induces a rank two subbundle $\mathcal{E}_{\alpha}\subset \mathcal{O}_{\mathbb{P}^1}\otimes K^{2r}$. If $r = 2$ we define $D_{unb}$ as the closure of the locus of maps $[\mathbb{P}^1,\alpha]\in \overline{M}_{0,0}(LG(2,4),2)$ such that $\mathcal{E}_{\alpha}\neq \mathcal{O}_{\mathbb{P}^1}(-1)^{\oplus 2}$. If $r\geq 3$ there is a trivial subbundle $\mathcal{O}_{\mathbb{P}^1}^{\oplus r-2}\subset\mathcal{E}_{\alpha}$ which induces a $(r-2)$-dimensional subspace $H_{\alpha}\subset\mathbb{P}^{2r-1}$. We define $D_{unb}^r$ as the closure of the locus of maps $[\mathbb{P}^1,\alpha]\in\overline{M}_{0,0}(LG(r,2r),2)$ such that $H_{\alpha}$ intersects a fixed $(r+1)$-dimensional subspace of $\mathbb{P}^{2r-1}$.

The main results in Lemma \ref{sphKLG}, Proposition \ref{effneflg}, Theorem \ref{mcd_lg}, Remark \ref{contr4} and Corollary \ref{Fano} can be summarized in the following statement: 

\begin{thm}\label{C}
Let $\overline{M}_{0,0}(LG(r,2r),2)$ be the Kontsevich space of conics in the Lagrangian Grassmannian $LG(r,2r)$, parametrizing Lagrangian subspaces of a $2r$-dimensional symplectic vector space, with $r\geq 2$. 

The effective cone $\Eff(\overline{M}_{0,0}(LG(r,2r),2))$ is generated by $\Delta^r$ and $D_{unb}^r$, and the nef cone $\Nef(\overline{M}_{0,0}(LG(r,2r),2))$ is generated by $H_{\sigma_2}^r$ and $T^r$. 

The Mori chamber decomposition of $\Eff(\mml)$ has three chambers as displayed in the following picture:
$$
\begin{tikzpicture}[line cap=round,line join=round,>=triangle 45,x=1.0cm,y=1.0cm]
xmin=2.5,
xmax=7.5,
ymin=1.7032313370047312,
ymax=5.301934941656083,
xtick={2.5,3.0,...,5.5},
ytick={2.0,2.5,...,5.0},]
\clip(2.1,2.0) rectangle (7.5,5.4);
\draw [->,line width=0.1pt] (3.,4.) -- (3.,5.); 
\draw [->,line width=0.1pt] (3.,4.) -- (4.,4.); 
\draw [->,line width=0.1pt] (3.,4.) -- (5.,3.); 
\draw [->,line width=0.1pt] (3.,4.) -- (5.,2.); 
\draw [shift={(3.,4.)},line width=0.4pt,fill=black,fill opacity=0.15000000596046448]  (0,0) --  plot[domain=-0.46364760900080615:0.,variable=\t]({1.*0.6965067581669063*cos(\t r)+0.*0.6965067581669063*sin(\t r)},{0.*0.6965067581669063*cos(\t r)+1.*0.6965067581669063*sin(\t r)}) -- cycle ;
\begin{scriptsize}
\draw[color=black] (3.085808915158281,5.2) node {$D_{unb}^r$};
\draw[color=black] (4.4,4) node {$H_{\sigma_2}^r$};
\draw[color=black] (5.25,3) node {$T^r$};
\draw[color=black] (5.3,2.1) node {$\Delta^r$};
\end{scriptsize}
\end{tikzpicture}
$$
where $H_{\sigma_2}^{r}\sim \frac{1}{2}(\Delta^r+2D_{unb}^r)$ and $T^r\sim\Delta^r+D_{unb}^r$. Furthermore, if $r\geq 2$ then $\Mov(\overline{M}_{0,0}(LG(r,2r),2))$ is generated by $T^r$ and $D_{unb}^r$ while $\Mov(\overline{M}_{0,0}(LG(2,4),2))$ is generated by $T^r$ and $H_{\sigma_2}^r$.

The divisor $H_{\sigma_2}^r$ induces a birational morphism 
$$f_{H_{\sigma_2}^r}:\mml\rightarrow \widetilde{Chow}(LG(r,2r),2)$$ 
which is an isomorphism away form the locus $Q^r(1)$ of double covers of a line in $LG(r,2r)$, and contracts $Q^r(1)$ so that the locus of double covers with the same image maps to
a point, where $\widetilde{Chow}(LG(r,2r),2)$ is the normalization of the Chow variety of conics in $LG(r,2r)$.

The divisor $T^r$ induces a morphism 
$$f_{T^r}:\mml\rightarrow \overline{M}_{0,0}(LG(r,2r),2,1)$$ 
which is an isomorphism away from $\Delta^r$ and contracts the locus of maps with reducible domain $[C_1\cup C_2,\alpha]$ to $\alpha(C_1\cap C_2)$, where $\overline{M}_{0,0}(LG(r,2r),2,1)$ is the moduli space of weighted stable maps to $LG(r,2r)$.

The birational model $X_r$ corresponding to the chamber delimited by $H_{\sigma_2}^r$ and $D_{unb}^r$ is a fibration $X_r\rightarrow SG(r-2,2r)$ with fibers isomorphic to the Grassmannian $\mathbb{G}(2,4)$ parametrizing plane in $\mathbb{P}^4$, where $SG(r-2,2r)$ is the symplectic Grassmannian parametrizing isotropic subspaces of dimension $r-2$. Moreover, $D_{unb}^r$ contracts $\overline{M}_{0,0}(LG(r,2r),2)$ onto $SG(r-2,2r)$.

Finally, $\mml$ is Fano for $2\leq r\leq 6$, weak Fano, that is $-K_{\mml}$ is nef and big, for $r = 7$, and $-K_{\mml}$ is not ample for $r\geq 8$.
\end{thm}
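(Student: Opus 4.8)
The plan is to reduce the claim to a single computation of the anticanonical class in the rank-two Picard group and then to locate it relative to the cones already determined. Since $\rho(\mml)=2$ and, by the first parts of the theorem, $\Eff(\mml)=\langle\Delta^r,D_{unb}^r\rangle$ while $\Nef(\mml)=\langle H_{\sigma_2}^r,T^r\rangle$ with $H_{\sigma_2}^r\sim\tfrac12\Delta^r+D_{unb}^r$ and $T^r\sim\Delta^r+D_{unb}^r$, the cone of curves $\NEbar(\mml)$ is the two-dimensional cone dual to $\Nef(\mml)$. Its two extremal rays are spanned by the classes contracted by the two morphisms in the theorem: the class $C_Q$ of a pencil of double covers of a fixed line in $LG(r,2r)$, contracted by $f_{H_{\sigma_2}^r}$ so that $H_{\sigma_2}^r\cdot C_Q=0$, and the class $C_\Delta$ of a pencil of reducible conics whose node has a fixed image, contracted by $f_{T^r}$ so that $T^r\cdot C_\Delta=0$. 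Writing $-K_{\mml}=aH_{\sigma_2}^r+bT^r$, one has $a=\frac{-K_{\mml}\cdot C_\Delta}{H_{\sigma_2}^r\cdot C_\Delta}$ and $b=\frac{-K_{\mml}\cdot C_Q}{T^r\cdot C_Q}$, with both denominators positive; hence $-K_{\mml}$ is ample (respectively nef) exactly when both $-K_{\mml}\cdot C_\Delta$ and $-K_{\mml}\cdot C_Q$ are positive (respectively non-negative).

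The core step is therefore to evaluate $-K_{\mml}\cdot C_\Delta$ and $-K_{\mml}\cdot C_Q$ as functions of $r$. Because $LG(r,2r)$ is homogeneous, $\mml$ is smooth of the expected dimension, so its canonical class is computed by the honest virtual canonical bundle formula applied to the universal conic $\pi:\mathcal C\to\mml$ with evaluation $ev:\mathcal C\to LG(r,2r)$, i.e. from $R\pi_*ev^*T_{LG(r,2r)}$ corrected by the relative dualizing sheaf along the boundary. The only geometric input required is the splitting type of the tangent bundle along a line, which, since the Fano index is $r+1$ and $\dim LG(r,2r)=\tfrac{r(r+1)}2$, must be
$$T_{LG(r,2r)}|_{\ell}\iso\O_{\ell}(2)\oplus\O_{\ell}(1)^{\oplus(r-1)}\oplus\O_{\ell}^{\oplus r(r-1)/2}.$$
Pulling this back under the degree-two cover and running the Euler-characteristic bookkeeping (accounting for the automorphism of the double cover and the relative dualizing sheaf) yields $-K_{\mml}\cdot C_Q$, and the reducible test family yields $-K_{\mml}\cdot C_\Delta$, both as affine-linear expressions in $r$. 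I expect $-K_{\mml}\cdot C_\Delta$ to remain positive for all $r\ge2$, while $-K_{\mml}\cdot C_Q$ is the quantity that decreases with $r$, is positive for $2\le r\le6$, vanishes at $r=7$, and becomes negative for $r\ge8$.

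The conclusion then follows formally. For $2\le r\le6$ both intersection numbers are positive, so $-K_{\mml}$ lies in the interior of $\Nef(\mml)$ and is ample, i.e. $\mml$ is Fano. At $r=7$ the vanishing $-K_{\mml}\cdot C_Q=0$ places $-K_{\mml}$ on the extremal ray of $\Nef(\mml)$ spanned by $H_{\sigma_2}^r$, so $-K_{\mml}$ is nef but not ample; it is moreover big, because $H_{\sigma_2}^r$ is big as the pullback of an ample class under the birational morphism $f_{H_{\sigma_2}^r}$, whence $\mml$ is weak Fano. For $r\ge8$ one has $-K_{\mml}\cdot C_Q<0$ with $C_Q$ effective, so $-K_{\mml}$ leaves the nef cone and is not ample. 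The main obstacle is the explicit deformation-theoretic evaluation of $-K_{\mml}\cdot C_Q$ along the double-cover family, where the stacky $\mathbb{Z}/2$-automorphism and the relative dualizing sheaf must be handled carefully to pin down the exact affine-linear dependence on $r$, and hence the precise thresholds at $r=6,7,8$; confirming the bigness of $-K_{\mml}$ at the boundary case $r=7$ is the secondary point, settled by the birationality of $f_{H_{\sigma_2}^r}$.
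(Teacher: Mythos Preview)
Your proposal addresses only the final Fano assertion, taking the cone structure and linear relations as given; this is reasonable since the theorem is a compilation, and the paper proves the Fano claim separately as well (its Corollary on Fano-ness). However, your route diverges genuinely from the paper's. The paper does not intersect $-K$ against test curves at all: instead it computes the canonical class globally by applying the de Jong--Starr formula for $K_{\overline{\mathcal M}_{0,0}(X,2)}$ in terms of $c_1(T_X)$ and $c_2(T_X)$, after first working out $c_1(T_{LG(r,2r)})=(r+1)\sigma_1^r$ and $c_2(T_{LG(r,2r)})=(r^2+2r)\sigma_2^r$ via the splitting principle applied to $T_{LG(r,2r)}\cong\Sym^2 S^\vee$. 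This yields directly $-K_{\mml}=5H_{\sigma_2}^r+\tfrac{r-7}{2}D_{unb}^r$ for $r>2$ (and a modified formula for $r=2$), from which the thresholds $r=6,7,8$ are read off by inspection against the nef cone. Your test-curve method would in principle recover the same numbers, and your splitting type $T_{LG}|_\ell\cong\O(2)\oplus\O(1)^{r-1}\oplus\O^{r(r-1)/2}$ is correct, but the proposal stops short of the actual computation: the phrases ``running the Euler-characteristic bookkeeping'' and ``I expect'' leave the decisive step undone, so as written this is a plan rather than a proof.

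There is also a genuine technical point you flag but do not resolve: the coarse space $\mml$ is not smooth along the double-cover locus (it has $\mathbb Z/2$ quotient singularities there), so your assertion that it ``is smooth of the expected dimension'' applies only to the stack. The paper handles this explicitly by computing $K$ on the stack, then passing to the coarse space via $\pi^*D_{unb}^2=2\overline D_{unb}^2$ and the ramification correction $K_{\overline{\mathcal M}}=\pi^*K_{\overline M}+\overline D_{unb}^2$; for $r>2$ the double-cover locus has codimension $\ge 2$ so no correction is needed. In your approach the curve $C_Q$ sits entirely inside this stacky locus, so the discrepancy between stack and coarse canonical classes would enter your intersection number $-K\cdot C_Q$ directly and must be accounted for to get the correct threshold.
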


Moreover, Proposition \ref{isor2}, Remarks \ref{Rem}, \ref{contr4} and Corollary \ref{aut} provide additional information for the case $r = 2$.
\begin{thm}\label{D}
The following $Sp(4)$-action  
$$
\begin{array}{cll}
Sp(4) \times \overline{M}_{0,0}(LG(2,4),2) & \longrightarrow & \overline{M}_{0,0}(LG(2,4),2) \\ 
(M, [C, \alpha]) & \longmapsto & [C, \wedge^{2}M\circ\alpha] 
\end{array} 
$$
induces on $\overline{M}_{0,0}(LG(2,4),2)$ a structure of spherical variety. Furthermore, there exists an isomorphism 
$$\varphi : \mmle \rightarrow \mathcal{S}_4$$
where $\mathcal{S}_4$ is the wonderful compactification of the space of symplectic quadrics of $\mathbb{P}^3$, mapping a smooth conic $C\subset LG(2,4)$ to the quadric $\bigcup_{[L]\in C}L\subset\mathbb{P}^3$. The Cox ring $\Cox(\overline{M}_{0,0}(LG(2,4),2))$ is generated by the sections of $\Delta^2,D_{unb}^2,H_{\sigma_2}^2,T^2$. 

The moduli space $\mmle$ identifies with the blow-up of $\mathbb{G}(1,4)$ along the Veronese $\mathcal{V}_2^{3}$. With this identification the morphism associated to $H_{\sigma_2}^2$ is the blow-down and $\widetilde{Chow}(LG(2,4),2)\cong \mathbb{G}(1,4)$, while the morphism associated to $T^2$ is induced by the strict transform on $\mathcal{S}_4$ of the linear system of quadrics containing $\mathcal{V}_2^{3}$, and its image is a $6$-fold of degree $40$ in $\mathbb{P}^{14}$ isomorphic to $\overline{M}_{0,0}(LG(2,4),2,1)$.  

Finally, $\PsAut(\mmle)\cong\Aut(\mmle) \cong PSp(4)$ where $PSp(4)$ is the projective symplectic group, and $\PsAut(\mmle)$ is the group of birational self-maps of $\mmle$ inducing automorphisms in codimension one. 
\end{thm}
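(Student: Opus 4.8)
The plan is to prove Theorem \ref{D} by assembling the pieces established earlier and then treating the automorphism group computation as the genuinely new content. The statement naturally splits into four assertions: (i) the $Sp(4)$-action gives a spherical structure, (ii) the isomorphism $\varphi:\mmle\to\mathcal{S}_4$ together with its geometric description, (iii) the identification of the two extremal morphisms and their images, and (iv) the computation $\PsAut(\mmle)\cong\Aut(\mmle)\cong PSp(4)$. Items (i)–(iii) should follow by citing Proposition \ref{isor2}, the $r=2$ case of Theorem \ref{C}, and Remark \ref{contr4}, so I would state them briefly and spend the real effort on (iv).

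\textbf{Sphericity and the isomorphism.} First I would invoke Proposition \ref{isor2} for the spherical structure: the $Sp(4)$-action on $\overline{M}_{0,0}(LG(2,4),2)$ is the one induced by $\wedge^2 M$ acting on Plücker coordinates, and a smooth conic $C\subset LG(2,4)$ sweeps out a quadric surface $\bigcup_{[L]\in C}L\subset\mathbb{P}^3$, yielding the equivariant map $\varphi$ to $\mathcal{S}_4$. By Proposition \ref{x4g14} we have $X_4=\mathbb{G}(1,4)$, so by Theorem \ref{A} (case $r=2$) $\mathcal{S}_4$ is the blow-up of $\mathbb{G}(1,4)$ along $\mathcal{V}_2^3$; hence $\varphi$ identifies $\mmle$ with this blow-up and makes the spherical structures match. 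The generation of $\Cox(\overline{M}_{0,0}(LG(2,4),2))$ by the sections of $\Delta^2, D_{unb}^2, H_{\sigma_2}^2, T^2$ is then the $r=2$ instance of Theorem \ref{B}/\ref{C} transported along $\varphi$.

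\textbf{The two morphisms.} For the $H_{\sigma_2}^2$-morphism I would note that, under $\varphi$, $H_{\sigma_2}^2$ corresponds to the class contracting the exceptional divisor $E_1$ (the general fact from Theorem \ref{C} that $f_{H_{\sigma_2}^r}$ contracts the double-cover locus $Q^r(1)$, which in the $r=2$ picture is exactly $E_1$); thus it is the blow-down $\mathcal{S}_4\to\mathbb{G}(1,4)$ and $\widetilde{Chow}(LG(2,4),2)\cong\mathbb{G}(1,4)$. For $T^2\sim\Delta^2+D_{unb}^2$, I would identify its linear system on $\mathcal{S}_4$ with the strict transform of the system of quadrics in $\P^{14}$ through $\mathcal{V}_2^3$; its image is the target $\overline{M}_{0,0}(LG(2,4),2,1)$ of $f_{T^2}$, and the degree-$40$ claim is precisely the self-intersection $(2H-E_1)^6=40$ recorded in Remark \ref{Rem}, so no new computation is needed.

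\textbf{The automorphism group.} This is where the main obstacle lies, so I would argue it carefully. The action of $Sp(4)$ descends to $PSp(4)$ since the center $\{\pm I\}$ acts trivially (because $\wedge^2(-I)=\mathrm{id}$), giving an inclusion $PSp(4)\hookrightarrow\Aut(\mmle)$. For the reverse inclusion I would work on the model $\mathcal{S}_4=\Bl_{\mathcal{V}_2^3}\mathbb{G}(1,4)$. Any automorphism of $\mathcal{S}_4$ permutes the finitely many $Sp(4)$-orbit closures and in particular must preserve the unique exceptional divisor $E_1$ (it is the only one of its numerical type), hence descends to an automorphism of $\mathbb{G}(1,4)$ preserving $\mathcal{V}_2^3$; conversely every such descends back up. So it suffices to show $\{g\in\Aut(\mathbb{G}(1,4)):g(\mathcal{V}_2^3)=\mathcal{V}_2^3\}\cong PSp(4)$. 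Since $\Aut(\mathbb{G}(1,4))\cong PGL(5)$ acts via $PGL(5)=\Aut(\P^4)$ and $\mathcal{V}_2^3\subset\P^4$ is the Veronese image of $\P^3$ under the complete linear system $|\O_{\P^3}(2)|$ lying in the hyperplane of the symplectic form, the subgroup preserving $\mathcal{V}_2^3$ is the image of $PGL(4)$-automorphisms of $\P^3$ that further preserve the symplectic polarity distinguishing the embedding $\P^4\subset\P(\wedge^2 K^4)$; this is exactly $PSp(4)$. Finally, for $\PsAut$ I would use that $\mmle$ is a Mori dream space (Theorem \ref{C}) with a rigid boundary: a pseudo-automorphism is an isomorphism in codimension one, it must fix the (two-dimensional) effective cone and permute its extremal rays $\Delta^2, D_{unb}^2$; since these have distinct intersection-theoretic invariants neither ray can be swapped, so every pseudo-automorphism preserves each Mori chamber and therefore is a genuine automorphism, giving $\PsAut(\mmle)=\Aut(\mmle)=PSp(4)$. \textbf{The hardest step} is pinning down the stabilizer of $\mathcal{V}_2^3$ inside $PGL(5)$ as precisely $PSp(4)$ rather than something larger; I would handle this by exhibiting the symplectic form as the unique (up to scalar) $PGL(5)$-invariant that cuts out the hyperplane $\P^4\subset\P^5$ containing $\mathcal{V}_2^3$ and whose stabilizer therefore pins the group down to the symplectic one.
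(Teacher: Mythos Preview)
Your handling of parts (i)--(iii) is fine and matches the paper's approach (Lemma \ref{sphKLG}, Proposition \ref{isor2}, Theorem \ref{mcd_lg}, Remarks \ref{Rem} and \ref{contr4}). The gap is in part (iv), the automorphism computation, precisely at the step you flag as hardest. Your sentence ``$\mathcal{V}_2^3\subset\P^4$ is the Veronese image of $\P^3$ under the complete linear system $|\O_{\P^3}(2)|$'' is simply false: $|\O_{\P^3}(2)|$ embeds $\P^3$ into $\P^9$, not $\P^4$, and the Veronese sits inside $X_4\cong\mathbb{G}(1,4)\subset\P^9$. As a result your identification of the stabilizer of $\mathcal{V}_2^3$ in $PGL(5)$ with $PSp(4)$ is asserted rather than proved; the phrase ``the image of $PGL(4)$-automorphisms of $\P^3$ that further preserve the symplectic polarity'' does not by itself rule out a larger stabilizer.

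The paper's argument (Corollary \ref{aut}) avoids this confusion by working in $\P^9$: every automorphism of $\mathbb{G}(1,4)$ is induced by one of the ambient $\P^9$ (cited as \cite{Co89}), hence an automorphism fixing $\mathcal{V}_2^3$ restricts to an element of $\Aut(\mathcal{V}_2^3)\cong PGL(4)$; since it also preserves $X_4$, which is the closure of the locus of symmetric \emph{symplectic} matrices, the induced element of $PGL(4)$ must send symplectic matrices to symplectic matrices and therefore lies in $PSp(4)$. Injectivity of the resulting map $\Aut(\mmle)\to PSp(4)$ holds because $\mathcal{V}_2^3$ spans $\P^9$. A route closer to what you seem to intend would be to identify $\mathcal{V}_2^3\subset\mathbb{G}(2,4)$ with the locus of planes tangent to the smooth quadric $LG(2,4)\subset\P^4$, so that its $PGL(5)$-stabilizer is the stabilizer of the quadric, namely $SO(5)\cong PSp(4)$ via the exceptional isomorphism; but this is not what you wrote. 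Finally, for $\PsAut=\Aut$ the paper uses that $\mmle$ is smooth Fano (Corollary \ref{Fano}) together with \cite[Proposition 7.2]{Ma18a}; your Mori-chamber argument is a reasonable alternative, but the last implication ``preserves the nef cone $\Rightarrow$ regular automorphism'' is not automatic and should be justified (for instance via $X\cong\Proj\bigoplus_m H^0(X,mA)$ for an ample $A$ fixed by the pseudo-automorphism).
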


\subsection*{Organization of the paper} Throughout the paper we will work over an algebraically closed field $K$ of
characteristic zero. In Section \ref{sec1}, as a warm-up we prove some of the main results in \cite{Va82}, \cite{Va84}, using the techniques based on tangent cones computations that we will then apply to the more involved case of symplectic quadrics. In Section \ref{CSSF} we construct the wonderful compactification $\mathcal{S}_{2r}$ of the space of symmetric and symplectic $2r\times 2r$ matrices. In Section \ref{divS2r} we study the Picard rank, the effective and the nef cones of $\mathcal{S}_{2r}$. In Section \ref{birS2r} we compute the Mori chamber decomposition of the effective cone of $\mathcal{S}_{4}$ and $\mathcal{S}_{6}$. Finally, in Section \ref{K_Grass}, taking advantage of the theory of complete symplectic quadrics, we investigate the birational geometry of Kontsevich spaces of conics in Lagrangian Grassmannians. 

\subsection*{Acknowledgments}
We thank very much Alex Casarotti, Massimiliano Mella, Giorgio Ottaviani and Jason Starr for useful discussions, and the referee for many helpful comments that helped us to improve the exposition and correct a mistake about the sphericity of $\overline{M}_{0,0}(LG(r,2r),2)$ for $r > 2$ in a first version of the paper. 
 
The second named author is a member of the Gruppo Nazionale per le Strutture Algebriche, Geometriche e le loro Applicazioni of the Istituto Nazionale di Alta Matematica "F. Severi" (GNSAGA-INDAM). 

\section{Complete quadrics}\label{sec1}
Let $V$ be a $K$-vector space of dimension $n+1$, and let $\mathbb{P}^N$ with $N = \binom{n+2}{2}-1$ be the projective space parametrizing quadratic forms on $\mathbb{P}^n = \mathbb{P}(V)$ up to a scalar multiple. 

The line bundle $\mathcal{O}_{\mathbb{P}^n}(2)$ induces an embedding
$$
\begin{array}{cccc}
\nu:
&\mathbb{P}^n & \longrightarrow & \mathbb{P}^N\\
      & [x_0:\dots :x_n] & \longmapsto & [x_0^2:x_0x_1:\dots :x_n^2]
\end{array}
$$ 
The image $\mathcal{V}^n_2 = \nu(\mathbb{P}^n) \subset \mathbb{P}^{N}$ is the \textit{Veronese variety} of dimension $n$ and degree $2^n$. 
We will denote by $[z_{0,0}:\dots :z_{n,n}]$ the homogeneous coordinates on $\mathbb{P}^N$, where $z_{i,j}$ corresponds to the product $x_ix_j$.  

\subsubsection*{Secant varieties} 
Given an irreducible and reduced non-degenerate variety $X\subset\P^N$, and a positive integer $h\leq N$ we denote by $\sec_h(X)$ 
the \emph{$h$-secant variety} of $X$. This is the subvariety of $\P^N$ obtained as the closure of the union of all $(h-1)$-planes 
$\langle x_1,...,x_{h}\rangle$ spanned by $h$ general points of $X$. 

A point $p\in \mathbb{P}^N$ can be represented by an $(n+1)\times (n+1)$ symmetric matrix $Z$. The Veronese variety $\mathcal{V}^n_2$ is the locus of rank one matrices. More generally, $p\in \sec_h(\mathcal{V}^n_2)$ if and only if $Z$ can be written as a linear combination of $h$ rank one matrices that is if and only if $\rank(Z)\leq h$. If $p = [z_{0,0}:\cdots:z_{n,n}]$ then we may write
\stepcounter{thm}
\begin{equation}\label{matrix}
Z = \left(
\begin{array}{ccc}
z_{0,0} & \dots & z_{0,n}\\ 
\vdots & \ddots & \vdots\\ 
z_{0,n} & \dots & z_{n,n}
\end{array}\right)
\end{equation}
Then, the ideal of $\sec_h(\mathcal{V}^n_2)$ is generated by the $(h+1)\times (h+1)$ minors of $Z$. 

By \cite[Lemma 3.3]{Ma18a} the $SL(n+1)$-action
$$
\begin{array}{ccc}
SL(n+1)\times \mathbb{P}^n & \longrightarrow & \mathbb{P}^n\\
(M,[v]) & \longmapsto & [Mv]
\end{array}
$$
induces the $SL(n+1)$-action on $\mathbb{P}^{N}$ given by
\stepcounter{thm}
\begin{equation}\label{acss}
\begin{array}{ccc}
SL(n+1)\times \mathbb{P}^{N} & \longrightarrow & \mathbb{P}^{N}\\
(M,Z) & \longmapsto & MZM^{t}
\end{array}
\end{equation}

The orbit closures of the action (\ref{acss}) are precisely the secant varieties $\sec_h(\mathcal{V}^n_2)$. Now, let us recall the notion of spherical and wonderful variety.

\begin{Definition}
A \textit{spherical variety} is a normal variety $X$ together with an action of a connected reductive affine algebraic group $\mathscr{G}$, a Borel subgroup $\mathscr{B}\subset \mathscr{G}$, and a base point $x_0\in X$ such that the $\mathscr{B}$-orbit of $x_0$ in $X$ is a dense open subset of $X$. 

Let $(X,\mathscr{G},\mathscr{B},x_0)$ be a spherical variety. We distinguish two types of $\mathscr{B}$-invariant prime divisors: a \textit{boundary divisor} of $X$ is a $\mathscr{G}$-invariant prime divisor on $X$, a \textit{color} of $X$ is a $\mathscr{B}$-invariant prime divisor that is not $\mathscr{G}$-invariant. We will denote by $\mathcal{B}(X)$ and $\mathcal{C}(X)$ respectively the set of boundary divisors and colors of $X$.
\end{Definition}

For instance, any toric variety is a spherical variety with $\mathscr{B}=\mathscr{G}$ equal to the torus. For a toric variety there are no colors, and the boundary divisors are the usual toric invariant divisors.

\begin{Definition}
A \textit{wonderful variety} is a smooth projective variety $X$ with the action of a semi-simple simply connected group $\mathscr{G}$ such that:
\begin{itemize}
\item[-] there is a point $x_0\in X$ with open $\mathscr{G}$ orbit and such that the complement $X\setminus \mathscr{G}\cdot x_0$ is a union of prime divisors $E_1,\cdots, E_r$ having simple normal crossing;
\item[-] the closures of the $\mathscr{G}$-orbits in $X$ are the intersections $\bigcap_{i\in I}E_i$ where $I$ is a subset of $\{1,\dots, r\}$.
\end{itemize} 
\end{Definition}  

As proven by D. Luna in \cite{Lu96} wonderful varieties are in particular spherical. Note that $\mathbb{P}^N$ is not a wonderful compactification of $SL(n+1)/H$, where $H$ is the stabilizer of the identity matrix with respect to the $SL(n+1)$-action in (\ref{acss}), since for instance the orbit closure $\sec_n(\mathcal{V}^n_2)$ is a non smooth divisor. In order to get a wonderful compactification we must consider the space of complete quadrics that we now describe. The \textit{space of complete quadrics} is the closure of the graph of the rational map 
$$
\begin{array}{ccc}
\mathbb{P}(\Sym^2V)& \dasharrow & \mathbb{P}(\Sym^2\bigwedge^{2}V)\times\dots\times \mathbb{P}(\Sym^2\bigwedge^nV)\\
 Z & \longmapsto & (\wedge^2Z,\dots,\wedge^{n}Z)
\end{array}
$$

By \cite[Theorem 6.3]{Va82} the space of complete quadrics can be constructed as a sequence of blow-ups as follows.

\begin{Construction}\label{ccq}
Let us consider the following sequence of blow-ups:
\begin{itemize}
\item[-] $\mathcal{Q}(n)_1$ is the blow-up of $\mathcal{Q}(n)_0:=\mathbb{P}^{N}$ along the Veronese variety $\mathcal{V}^n_2$;
\item[-] $\mathcal{Q}(n)_2$ is the blow-up of $\mathcal{Q}(n)_1$ along the strict transform of $\sec_2(\mathcal{V}^n_2)$;\\
$\vdots$
\item[-] $\mathcal{Q}(n)_i$ is the blow-up of $\mathcal{Q}(n)_{i-1}$ along the strict transform of $\sec_i(\mathcal{V}^n_2)$;\\
$\vdots$
\item[-] $\mathcal{Q}(n)_{n-1}$ is the blow-up of $\mathcal{Q}(n)_{n-2}$ along the strict transform of $\sec_{n-1}(\mathcal{V}^n_2)$.
\end{itemize}
Let $f_i:\mathcal{Q}(n)_i\rightarrow \mathcal{Q}(n)_{i-1}$ be the blow-up morphism. We will denote by $E_i^q$ both the exceptional divisor of $f_i$ and its strict transforms in the subsequent blow-ups. We will denote by $\mathcal{Q}(n)$ the last blow-up $\mathcal{Q}(n)_{n-1}$ and by $f:\mathcal{Q}(n)\rightarrow\mathbb{P}^{N}$ the composition of the $f_i$. 

Then for any $i = 1,\dots,n-1$ the variety $\mathcal{Q}(n)_{i}$ is smooth, the strict transform of $\sec_{i+1}(\mathcal{V}^n_2)$ in $\mathcal{Q}(n)_{i}$ is smooth, and the divisor $E_1^q\cup E_2^q\cup\dots \cup E^q_{i}$ in $\mathcal{Q}(n)_{i}$ is simple normal crossing. Furthermore, the variety $\mathcal{Q}(n)$ is isomorphic to the space of complete $(n-1)$-dimensional quadrics.
\end{Construction}

In particular, $\mathcal{Q}(n)$ is a wonderful compactification of the homogeneous space $SL(n+1)/SO(n+1)$. We now recall some facts about the varieties $\sec_h(\mathcal{V}^n_2)$.

\begin{Remark}\label{sec_ver}
Recall that $\sec_h(\mathcal{V}^n_2)$ identifies with the variety parametrizing $(n+1)\times (n+1)$ symmetric matrices modulo scalar of rank at most $h$. An argument similar to the one used to estimate the dimension of the spaces of matrices, not necessarily symmetric, of rank at most $h$ in \cite[Example 12.1]{Ha95} shows that
$$\dim(\sec_h(\mathcal{V}^n_2)) = \frac{2nh-h^2+3h-2}{2}$$
for $h\leq n$. Furthermore, identifying $\sec_h(\mathcal{V}^n_2)$ with the variety parametrizing $(n+1)\times (n+1)$ symmetric matrices modulo scalar of corank at least $n+1-h$, by \cite[Proposition 12(b)]{HT84} we get that the degree of $\sec_h(\mathcal{V}^n_2)$ is given by
$$\deg(\sec_h(\mathcal{V}^n_2)) = \prod_{i=0}^{n-h}\frac{\binom{n+1+i}{n+1-h-i}}{\binom{2i+1}{i}}.$$
In particular, for $h = n$ we get $n+1$, and for $h = 1$ we get $2^n$.
\end{Remark}

\begin{Proposition}\label{tcones}
The tangent cone of $\sec_h(\mathcal{V}^n_2)$ at a point $p\in \sec_k(\mathcal{V}^n_2)\setminus\sec_{k-1}(\mathcal{V}^n_2)$ for $k\leq h$ is a cone with vertex of dimension $\binom{n+2}{2}-1-\frac{(n-k+1)(n-k+2)}{2}$ over $\sec_{h-k}(\mathcal{V}^{n-k}_2)$. In particular, for $k < h$ we have
$$\mult_{\sec_k(\mathcal{V}^n_2)\setminus\sec_{k-1}(\mathcal{V}^n_2)}\sec_h(\mathcal{V}^n_2) = \prod_{i=0}^{n-h}\frac{\binom{n-k+1+i}{n+1-h-i}}{\binom{2i+1}{i}}
$$ 
and $\Sing(\sec_h(\mathcal{V}^n_2)) = \sec_{h-1}(\mathcal{V}^n_2)$.
\end{Proposition}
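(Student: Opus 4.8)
The plan is to use the congruence action \eqref{acss} to reduce the computation to a single standard point, and then to exhibit an explicit local product decomposition of $\sec_h(\mathcal{V}^n_2)$ by means of the Schur complement. Since the $SL(n+1)$-action preserves every $\sec_j(\mathcal{V}^n_2)$ and, over an algebraically closed field, symmetric forms are classified by their rank, it acts transitively on the locus of symmetric matrices of rank exactly $k$. The induced linear automorphisms of $\mathbb{P}^N$ therefore carry the tangent cone at an arbitrary $p\in\sec_k(\mathcal{V}^n_2)\setminus\sec_{k-1}(\mathcal{V}^n_2)$ isomorphically onto the tangent cone at the standard point $p_0=[Z_0]$, where $Z_0=\diag(I_k,0)$, so it suffices to analyze the germ of $\sec_h$ at $p_0$. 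I would work in the affine chart $z_{0,0}=1$, identified with $T_{p_0}\mathbb{P}^N$, and write a nearby symmetric matrix in block form $Z=\left(\begin{smallmatrix}A & B\\ B^t & C\end{smallmatrix}\right)$ with $A$ of size $k$, so that $A$ is invertible near $Z_0$.

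By the Schur complement identity one has $\rank Z=k+\rank W$ with $W=C-B^tA^{-1}B$, so near $p_0$ the variety $\sec_h$ is cut out by $\rank W\le h-k$. The substitution $(A,B,C)\mapsto(A,B,W)$ is a local (formal, or analytic) change of coordinates, being triangular with $A$ invertible, and in the new coordinates $\sec_h$ becomes the product of the smooth affine $(A,B)$-factor with the locus $\{\rank W\le h-k\}$, i.e. the affine cone over $\sec_{h-k}(\mathcal{V}^{n-k}_2)$; concretely $\widehat{\mathcal{O}}_{\sec_h,p_0}$ is a power series ring over $\widehat{\mathcal{O}}_{\sec_{h-k}(\mathcal{V}^{n-k}_2),0}$ at the vertex. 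From this all three assertions follow. The tangent cone of a product is the product of tangent cones; the smooth factor contributes a linear space, while $\sec_{h-k}(\mathcal{V}^{n-k}_2)$, defined by the homogeneous minors of $W$, is a cone equal to its own tangent cone at the vertex, giving the asserted structure. The linear vertex has dimension $N$ minus the number $\tfrac{(n-k+1)(n-k+2)}{2}$ of entries of $W$, which is exactly $\binom{n+2}{2}-1-\tfrac{(n-k+1)(n-k+2)}{2}$. For $k<h$, multiplicity is unchanged by the smooth power series factor and equals the degree at the vertex of a cone, so $\mult_{p}\sec_h=\deg\sec_{h-k}(\mathcal{V}^{n-k}_2)$; substituting $n\mapsto n-k$ and $h\mapsto h-k$ in the degree formula of Remark \ref{sec_ver} yields the stated product. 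Finally, when $k=h$ the local model is smooth, so rank-$h$ points are smooth, whereas for $k<h$ the multiplicity is at least $2$ since $\sec_{h-k}(\mathcal{V}^{n-k}_2)$ is not linear, so every point of rank $<h$ is singular; hence $\Sing(\sec_h(\mathcal{V}^n_2))=\sec_{h-1}(\mathcal{V}^n_2)$.

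The main obstacle is the rigorous justification of the local product structure: one must verify that the Schur-complement substitution is genuinely an isomorphism of completed local rings rather than a mere set-theoretic rank reduction, and that both the tangent cone and the multiplicity behave as expected under the resulting power series extension and under coning over a vertex. Some care is also needed with the projective-versus-affine normalization, since passing to the chart $z_{0,0}=1$ removes exactly one coordinate and accounts for the precise dimension of the vertex.
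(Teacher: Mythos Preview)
Your argument is correct and follows a well-known route for determinantal varieties: the Schur complement gives a genuine local analytic (or formal) product decomposition $\widehat{\mathcal O}_{\sec_h,p_0}\cong K[\![A,B]\!]\,\widehat{\otimes}\,\widehat{\mathcal O}_{\sec_{h-k}(\mathcal V_2^{n-k}),0}$, and everything (tangent cone, multiplicity, singular locus) is read off from this. The paper's proof is more elementary and avoids the change of variables altogether: it translates $p_k$ to the origin, observes directly that the lowest-degree parts of the $(h+1)\times(h+1)$ minors of $Z$ are exactly the $(h-k+1)\times(h-k+1)$ minors of the lower-right $(n-k+1)\times(n-k+1)$ block, and thereby obtains only an \emph{inclusion} $TC_{p_k}\sec_h\subseteq C$ into the asserted cone; equality is then forced by a dimension count using Remark~\ref{sec_ver}. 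So the difference is one of strength versus directness: your Schur-complement argument produces the full local model (hence no dimension argument is needed, and the irreducibility/reducedness of the tangent cone come for free), at the cost of justifying that the substitution is an isomorphism of completed local rings, which you rightly flag; the paper trades that for a bare computation of initial forms plus the external input $\dim\sec_h(\mathcal V_2^n)$. Either approach yields the multiplicity via $\deg\sec_{h-k}(\mathcal V_2^{n-k})$ and Remark~\ref{sec_ver}.
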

\begin{proof}
We compute the tangent cone of $\sec_h(\mathcal{V}^n_2)$ at 
$$
p_k = \left(
\begin{array}{cc}
I_{k,k} & 0_{k,n+1-k} \\ 
0_{n+1-k,k} & 0_{n+1-k,n+1-k}
\end{array}
\right)
$$ 
where $I_{k,k}$ is the $k\times k$ identity matrix. Consider the affine chart $z_{0,0}\neq 0$ and the change of coordinates $z_{i,i}\mapsto z_{i,i}-1$ for $i = 1,\dots,k-1$, $z_{i,j}\mapsto z_{i,j}$ if $i\neq j$. Then the matrix $Z$ in (\ref{matrix}) takes the following form
$$
\left(
\begin{array}{ccccccc}
1 & z_{0,1} & \hdots & z_{0,k-1} & z_{0,k} & \hdots & z_{0,n} \\ 
z_{0,1} & z_{1,1}-1 & \hdots & z_{1,k-1} & z_{1,k} & \hdots & z_{1,n} \\ 
\vdots & \vdots & \ddots & \vdots & \vdots & \ddots & \vdots \\ 
z_{0,k-1} & z_{1,k-1} & \hdots & z_{k-1,k-1}-1 & z_{k-1,k} & \hdots & z_{k-1,n} \\ 
z_{0,k} & z_{1,k} & \hdots & z_{k-1,k} & z_{k,k} & \hdots & z_{k,n} \\ 
\vdots & \vdots & \ddots & \vdots & \vdots & \ddots & \vdots \\ 
z_{0,n} & z_{1,n} & \hdots & z_{k-1,n} & z_{k,n} & \hdots & z_{n,n}
\end{array} 
\right)
$$
Recall that $\sec_h(\mathcal{V}^n_2)\subseteq\mathbb{P}^N$ is cut out by the $(h+1)\times (h+1)$ minors of $Z$. Now, the lowest degree terms of these minors are given by the $(h+1-k)\times (h+1-k)$ minors of the following matrix
$$
\left(
\begin{array}{ccc}
z_{k,k} & \hdots & z_{k,n}\\ 
\vdots & \ddots & \vdots \\ 
z_{k,n} & \hdots & z_{n,n}
\end{array} 
\right)
$$
Therefore, the tangent cone $TC_{p_k}\sec_h(\mathcal{V}^n_2)$ is contained in the cone $C$ over $\sec_{h-k}(\mathcal{V}^{n-k}_2)$ with vertex the linear subspace of $\mathbb{P}^N$ given by $\{z_{k,k} =\dots = z_{k,n} = z_{k+1,k+1} = \dots = z_{k+1,n} = \dots = z_{n,n}=0\}$. Now, Remark \ref{sec_ver} yields 
$$\dim(C) = \binom{n+2}{2}-1-\frac{(n-k+1)(n-k+2)}{2}+\dim(\sec_{h-k}(\mathcal{V}^{n-k}_2))+1 = \dim(\sec_h(\mathcal{V}^n_2))$$
and hence $TC_{p_k}\sec_h(\mathcal{V}^n_2) = C$. Finally, to get the formula for the multiplicity it is enough to observe that 
$$\mult_{p_k}\sec_h(\mathcal{V}^n_2) = \mult_{p_k}TC_{p_k}\sec_h(\mathcal{V}^n_2) = \deg(\sec_{h-k}(\mathcal{V}^{n-k}_2))$$
and to apply the formula for the degree of the secant varieties of $\mathcal{V}^n_2$ in Remark \ref{sec_ver}.
\end{proof}

We will need the following result on fibrations with smooth fibers on a smooth base. 

\begin{Proposition}\label{smooth_fib}
Let $f:X\rightarrow Y$ be a surjective morphism of varieties over an algebraically closed field with equidimensional smooth fibers. If $Y$ is smooth then $X$ is smooth as well. 
\end{Proposition}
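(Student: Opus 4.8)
The plan is to reduce to a pointwise statement and to compare, at each point, the Zariski tangent space of $X$ with the local dimension of $X$. Smoothness is local, so it suffices to fix a point $x\in X$ and prove that $\dim T_x X = \dim_x X$. I set $y = f(x)$ and abbreviate $e = \dim_y Y$ and $d = \dim_x X_y$, where $X_y = f^{-1}(y)$ is the fiber through $x$. By hypothesis $Y$ is smooth at $y$ and $X_y$ is smooth at $x$, and equidimensionality of the fibers makes $d$ the common dimension of all fibers.

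The first step is an upper bound on the tangent space. The differential $df_x\colon T_x X \to T_y Y$ has kernel exactly $T_x X_y$, so rank–nullity gives
$$\dim T_x X = \dim T_x X_y + \dim \operatorname{im}(df_x) \le \dim T_x X_y + \dim T_y Y.$$
Since $X_y$ is smooth at $x$ and $Y$ is smooth at $y$, the right-hand side equals $d + e$, and hence $\dim T_x X \le d + e$.

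The second step, which I expect to be the crux, is the matching lower bound $\dim_x X \ge d + e$ on the local dimension. Here I would use that $f$ is surjective, hence dominant. Passing to an irreducible component of $X$ through $x$ and to the component of $Y$ containing $y$ (which, as $Y$ is smooth, is a connected component), the dimension theorem for dominant morphisms of irreducible varieties shows that the generic fiber has dimension $\dim X - \dim Y$; as every fiber has pure dimension $d$, this forces $\dim X = \dim Y + d$, whence $\dim_x X = e + d$. The surjectivity hypothesis enters precisely at this point: it guarantees that the image is all of $Y$, so that $e$, rather than the possibly smaller dimension of $\overline{f(X)}$, appears in the count.

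Finally I would combine the two steps. Since $\dim_x X \le \dim T_x X$ always holds, the chain
$$e + d = \dim_x X \le \dim T_x X \le d + e$$
forces equalities throughout, so $\dim T_x X = \dim_x X$ and $x$ is a smooth point; as $x$ was arbitrary, $X$ is smooth. The only delicate point is the reduction to the irreducible case, since $X$ may a priori be reducible: smoothness together with pure-dimensionality of the fibers forces $X_y$ to be locally irreducible of dimension $d$ at $x$, which in turn singles out one component of $X$ through $x$ on which the dimension count applies. This bookkeeping with components is where I expect the most care to be needed.
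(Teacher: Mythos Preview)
Your argument is correct. The paper actually offers two proofs: first a one-line appeal to the literature (equidimensional fibers over a regular base give flatness, and a flat morphism with smooth fibers over a smooth base has smooth total space), and then a direct argument via the Jacobian criterion after locally embedding $X\subset K^N$ with $Y=K^m$. Your proof is an intrinsic version of the second one: where the paper counts linearly independent gradients $(\nabla g_i)(p),(\nabla f_j)(p)$ among the equations cutting out $X$ and the fiber, you use the rank--nullity bound $\dim T_xX\le\dim T_xX_y+\dim T_yY$ coming from $\ker(df_x)=T_xX_y$, and the dimension equality $\dim X=d+e$ plays the same role in both. Your phrasing avoids choosing coordinates, which is cleaner; the paper's makes the Jacobian criterion explicit. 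One remark: in this paper ``variety'' means irreducible, so your closing worry about components is unnecessary and the paper's direct proof simply takes it for granted.
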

\begin{proof}
By \cite[Theorem 3.3.27]{Sch99} the morphism $f:X\rightarrow Y$ is flat. Finally, since all the fibers of $f:X\rightarrow Y$ are smooth and of the same dimension \cite[Theorem 3', Chapter III, Section 10]{Mum99} yields that $X$ is smooth. 

However, a direct proof is at hand and we present it in what follows. Since the problem is local on both $X$ and $Y$ we may assume that $X\subset K^N$ is an affine variety cut out by polynomials $g_1,\dots,g_a$, $Y = K^m$, and $f:X\rightarrow Y$ is given by $f(x) = (f_1(x),\dots, f_m(x))$. 

Consider a point $p\in X$. Without loss of generality we may assume that $f(p) = 0$. Then the fiber $X_0$ of $f$ through $p$ is given by
$$f^{-1}(0) = \{x\in K^N \: | \: g_1(x) = \dots = g_a(x) = f_1(x) = \dots = f_m(x) = 0\}.$$
Now, since $X_0$ is smooth at $p$ there are $b\leq a$ polynomials among $g_1,\dots,g_a$ and $l\leq m$ polynomials among $f_1,\dots,f_m$ such that $b+l = m+N-\dim(X)$ and the vectors
$$(\nabla g_1)(p),\dots,(\nabla g_b)(p),(\nabla f_1)(p),\dots,(\nabla f_l)(p)$$
are linearly independent. Now, $l\leq m$ yields $b\geq N-\dim(X)$. On the other hand, $X$ is irreducible of codimension $N-\dim(X)$ and hence $b\leq N-\dim(X)$. We conclude that $b = N - \dim(X)$ and the vectors
$$(\nabla g_1)(p),\dots,(\nabla g_{N - \dim(X)})(p)$$
are linearly independent. So $X$ is smooth at $p$.
\end{proof}

\begin{Notation}\label{Not_ST_Q}
We will denote by $\sec_h(\mathcal{V}^n_2)^i$ the strict transform of $\sec_h(\mathcal{V}^n_2)$ in $\mathcal{Q}(n)_i$ for $h > i$. Furthermore, as already said in Construction \ref{ccq}, for simplicity of notation we will denote by $E_i^q$ both the exceptional divisor of $f_i$ and its strict transforms in the subsequent blow-ups.
\end{Notation}

In the following we will analyze the geometry of the $SL(n+1)$-orbits in the blow-ups $\mathcal{Q}(n)_i$ in Construction \ref{ccq}. 
 
\begin{Proposition}\label{Qn_won}
For any $i = 0,\dots,n-1$ the variety $\mathcal{Q}(n)_i$ is smooth and the divisors $E^q_1,\dots,E^q_i$ are smooth and intersect transversally in $\mathcal{Q}(n)_i$. Furthermore, the strict transform $\sec_{i+1}(\mathcal{V}_2^n)^i$ of $\sec_{i+1}(\mathcal{V}_2^n)$ in $\mathcal{Q}(n)_i$ is smooth and the intersections among $\sec_{i+1}(\mathcal{V}_2^n)^i, E^q_1,\dots,E^q_i$ are transversal. The closures of the orbits of the $SL(n+1)$-action on $\mathcal{Q}(n)_i$ induced by (\ref{acss}) are given by all the possible intersections of $E^q_1,\dots,E^q_{i},\sec_{i+1}(\mathcal{V}_2^n)^i,\dots,\sec_{n}(\mathcal{V}_2^n)^i$ and $\mathcal{Q}(n)_i$ itself. 

In particular, the variety $\mathcal{Q}(n)$ is smooth, the divisors $E^q_1,\dots,E^q_{n-1},\sec_n(\mathcal{V}_2^n)^{n-1}$ are smooth and the intersections among them are transversal, the closures of the orbits of the $SL(n+1)$-action on $\mathcal{Q}(n)$ induced by (\ref{acss}) are given by all the possible intersections of the divisors $E^q_1,\dots,E^q_{n-1},\sec_n(\mathcal{V}_2^n)^{n-1}$ and $\mathcal{Q}(n)$ itself. Hence, $\mathcal{Q}(n)$ is wonderful.  
\end{Proposition}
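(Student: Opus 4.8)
I would argue by induction on $i$. The case $i = 0$ is immediate: $\mathcal{Q}(n)_0 = \mathbb{P}^N$ is smooth, there are no exceptional divisors to examine, and $\sec_1(\mathcal{V}_2^n) = \mathcal{V}_2^n$ is the smooth Veronese; moreover the $SL(n+1)$-orbits of the action (\ref{acss}) are the loci of symmetric matrices of fixed rank, so the orbit closures are exactly the nested secant varieties $\sec_1(\mathcal{V}_2^n)\subset\dots\subset\sec_n(\mathcal{V}_2^n)\subset\mathbb{P}^N$, matching the asserted (empty) intersection pattern. For the inductive step I assume the statement at stage $i-1$; in particular the center $\sec_i(\mathcal{V}_2^n)^{i-1}$ of the $i$-th blow-up is smooth and meets $E^q_1,\dots,E^q_{i-1}$ transversally inside the smooth variety $\mathcal{Q}(n)_{i-1}$. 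To make the induction close I would in fact carry the stronger hypothesis that $\Sing(\sec_h^{i-1}) = \sec_{h-1}^{i-1}$ for every $h > i-1$ (with the convention that for $h = i$ this says $\sec_i^{i-1}$ is smooth), since this is what controls the singularities one meets after the next blow-up.

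\textbf{The routine part.} Blowing up the smooth center $\sec_i(\mathcal{V}_2^n)^{i-1}$ inside the smooth $\mathcal{Q}(n)_{i-1}$ produces a smooth $\mathcal{Q}(n)_i$, and the new exceptional divisor $E^q_i$, being a projective bundle over a smooth base, is smooth. The strict transforms of $E^q_1,\dots,E^q_{i-1}$ remain smooth and simple normal crossing by the standard fact that blowing up a smooth subvariety transverse to a simple normal crossing divisor (transversality being part of the inductive hypothesis) preserves that property, and the same bookkeeping of local equations shows that $E^q_1,\dots,E^q_i$ still meet transversally.

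\textbf{The crux: smoothness of $\sec_{i+1}(\mathcal{V}_2^n)^i$.} Away from the new exceptional divisor $E^q_i$ the blow-down restricts to an isomorphism onto $\sec_{i+1}(\mathcal{V}_2^n)^{i-1}\setminus\sec_i(\mathcal{V}_2^n)^{i-1}$, which is smooth because the inductive hypothesis gives $\Sing(\sec_{i+1}^{i-1}) = \sec_i^{i-1}$. Over the center I would invoke the tangent cone computation of Proposition \ref{tcones}: at a point of $\sec_i(\mathcal{V}_2^n)\setminus\sec_{i-1}(\mathcal{V}_2^n)$ one has $h - k = (i+1) - i = 1$, so the tangent cone of $\sec_{i+1}(\mathcal{V}_2^n)$ is a cone over the \emph{smooth} Veronese $\sec_1(\mathcal{V}_2^{n-i}) = \mathcal{V}_2^{n-i}$ with vertex the tangent space to the center. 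Blowing up the vertex of a cone over a smooth projective variety yields a smooth total space, and the induced map $\sec_{i+1}(\mathcal{V}_2^n)^i\cap E^q_i\to\sec_i(\mathcal{V}_2^n)^{i-1}$ realizes the exceptional part as a fibration whose fibers are the projectivized normal cones, each a copy of $\mathcal{V}_2^{n-i}$ of constant dimension $n-i$ (consistent with the dimension formulas of Remark \ref{sec_ver}). Proposition \ref{smooth_fib} then gives smoothness, and the very same local coordinates exhibit the transversality of $\sec_{i+1}(\mathcal{V}_2^n)^i$ with $E^q_1,\dots,E^q_i$. I expect the \emph{main obstacle} to be running this local model uniformly over the whole center, in particular over the deeper strata $\sec_{i-1}(\mathcal{V}_2^n)\supset\dots$ where the earlier blow-ups have already altered the ambient geometry; here I would lean on the full $SL(n+1)$-equivariance of the construction, which reduces the check to one representative point per orbit and matches the local picture there against the tangent cone model carried by the strengthened inductive hypothesis.

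\textbf{Orbit closures and conclusion.} Since every center is an orbit closure, each blow-up is $SL(n+1)$-equivariant and the action lifts to $\mathcal{Q}(n)_i$; the boundary divisors $E^q_1,\dots,E^q_i,\sec_{i+1}(\mathcal{V}_2^n)^i,\dots,\sec_n(\mathcal{V}_2^n)^i$ are therefore $G$-invariant. I would then argue that each nonempty intersection of these divisors is a single orbit closure (by irreducibility via the fibration structure above together with the dimension count of Remark \ref{sec_ver}) and conversely that every orbit closure is such an intersection, so the orbit poset matches the intersection poset. Specializing to $i = n-1$ gives that $\mathcal{Q}(n)$ is smooth with smooth, transversally intersecting boundary divisors $E^q_1,\dots,E^q_{n-1},\sec_n(\mathcal{V}_2^n)^{n-1}$ whose intersections are precisely the orbit closures, which is exactly the definition of a wonderful variety.
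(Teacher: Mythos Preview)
Your overall architecture---induction on $i$, Proposition \ref{tcones} for the local model, Proposition \ref{smooth_fib} for smoothness of the fibrations---is exactly the paper's. The gap is in how you set up the induction, and it is precisely the ``main obstacle'' you flag but do not resolve.

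You induct on $i$ for a \emph{fixed} $n$, carrying the strengthened hypothesis $\Sing(\sec_h^{i-1}) = \sec_{h-1}^{i-1}$ for all $h>i-1$. Two problems. First, you never explain how to propagate this strengthened hypothesis to stage $i$; you only argue smoothness of $\sec_{i+1}^i$, not the singular locus of $\sec_h^i$ for $h>i+1$, so the induction does not close on its own terms. Second, and more seriously, at a point of the center lying in a deep stratum $E^q_{j_1}\cap\dots\cap E^q_{j_t}\cap\sec_i^{i-1}$ the tangent cone model of Proposition \ref{tcones} is no longer directly available: the earlier blow-ups have already replaced the ambient $\mathbb{P}^N$ by something else, and $SL(n+1)$-equivariance only moves you within that orbit, not out of it. Saying you will ``match the local picture against the tangent cone model'' is the statement of what needs to be done, not an argument.

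The paper's fix is to induct on $i$ \emph{uniformly in $n$}: assume the full statement holds for every $i'<j$ and every $n$. Then for the deep strata one considers the fibration
\[
\sec_{j+1}(\mathcal{V}_2^n)^j\cap E^q_{j_1}\cap\dots\cap E^q_{j_t}\longrightarrow E^q_{j_1}\cap\dots\cap E^q_{j_{t-1}}\cap\sec_{j_t}(\mathcal{V}_2^n)^{j_t-1},
\]
whose fibers are identified (via Proposition \ref{tcones}) with $\sec_{j-j_t+1}(\mathcal{V}_2^{\,n-j_t})^{j-j_t}$, the strict transform in the \emph{lower-dimensional} model $\mathcal{Q}(n-j_t)_{j-j_t}$. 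Smoothness of the fibers now follows from the induction hypothesis at the smaller $n-j_t$ and the smaller index $j-j_t<j$; smoothness of the base is the induction hypothesis at the same $n$ and index $j_t-1<j$. Proposition \ref{smooth_fib} and a dimension count (using Remark \ref{sec_ver}) then give smoothness and transversality simultaneously. This ``induct over all $n$'' is the missing idea that converts your sketch into a proof.
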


\begin{proof}
We will proceed as follows. For $i = 0,1$ we will prove the statement for any $n$. Then we will prove that if for $i<j$ the statement holds for any $n$ then it also holds for $i = j$ and any $n$. This will prove the statement for any $n\geq 1$ and $i = 0,\dots, n-1$. 

For $i = 0$ we have $\mathcal{Q}(n)_0\cong \mathbb{P}^N$, there are no exceptional divisors, and the closures of the orbits of the action (\ref{acss}) are the secant varieties of $\mathcal{V}_2^n$. Therefore, for $i = 0$ the statements holds for any $n$. Even though we could use the case $i = 0$ as the first step of the proof, to get acquainted with the arguments we will apply, we develop in full detail the case $i = 1$ as well. 

The variety $\mathcal{Q}(n)_1$ is the blow-up of $\mathbb{P}^N$ along the Veronese variety $\mathcal{V}_2^n$. Hence it is smooth. By Proposition \ref{tcones} $\sec_2(\mathcal{V}_2^n)$ is smooth away from $\mathcal{V}_2^n$ and $\sec_2(\mathcal{V}_2^n)^1\cap E^q_1\rightarrow \mathcal{V}_2^n$ is a fibration whose fibers are isomorphic to $\mathcal{V}_2^{n-1}$. Hence, Proposition \ref{smooth_fib} yields that $\sec_2(\mathcal{V}_2^n)^1\cap E^q_1$ is smooth and since $\dim(\sec_2(\mathcal{V}_2^n)^1\cap E^q_1) = n+n-1 = 2n-1 = \dim(\sec_2(\mathcal{V}_2^n)^1)-1$ we conclude that $\sec_2(\mathcal{V}_2^n)^1$ is smooth and the intersection $\sec_2(\mathcal{V}_2^n)^1\cap E^q_1$ is transversal. 

Now, via the action of $SL(n+1)$ in (\ref{acss}) we can translate any fiber of $E^q_1$ over $\mathcal{V}_2^n$ to any other fiber. Fix one such fiber $E^q_{1,p}$. By Proposition \ref{tcones} we have that $\sec_h(\mathcal{V}_2^n)^1\cap E^q_{1,p} = \sec_{h-1}(\mathcal{V}_2^{n-1})$ and the action of $SL(n+1)$ in (\ref{acss}) restricts on $E^q_{1,p}$ to the corresponding action of $SL(n)$. This proves the statement about the orbits for $\mathcal{Q}(n)_1$ for any $n\geq 1$. 

Assume that for any $i < j$ the statement holds for any $n$. Since $\mathcal{Q}(n)_{j-1}$ and $\sec_{j}(\mathcal{V}^n_2)^{j-1}\subset\mathcal{Q}(n)_{j-1}$ are smooth the blow-up $\mathcal{Q}(n)_{j}$ of $\mathcal{Q}(n)_{j-1}$ along $\sec_{j}(\mathcal{V}^n_2)^{j-1}$ is smooth as well. Furthermore, since all the intersections among $\sec_{j}(\mathcal{V}_2^n)^{j-1}, E^q_1,\dots,E^q_{j-1}$ in $\mathcal{Q}(n)_{j-1}$ are transversal we have that all the intersections among $E^q_1,\dots,E^q_{j}$ in $\mathcal{Q}(n)_{j}$ are transversal as well. 

Now, consider an intersection of the following form $\sec_{j+1}(\mathcal{V}^n_2)^j\cap E^q_{j_1}\cap\dots \cap E^q_{j_t}$. By Proposition \ref{tcones} the restriction of the blow-down morphism
$$\sec_{j+1}(\mathcal{V}^n_2)^j\cap E^q_{j_1}\cap\dots \cap E^q_{j_t}\rightarrow E^q_{j_1}\cap \dots \cap E^q_{j_{t-1}}\cap \sec_{j_t}(\mathcal{V}^n_2)^{j_t-1}$$
has fibers isomorphic to $\sec_{j-j_t+1}(\mathcal{V}^{n-j_t})^{j-j_t}$. Since both $E^q_{j_1}\cap \dots \cap E^q_{j_{t-1}}\cap \sec_{j_t}(\mathcal{V}^n_2)^{j_t-1}$ and $\sec_{j-j_t-1}(\mathcal{V}^{n-j_t})^{j-j_t}$ are smooth Proposition \ref{smooth_fib} yields that $\sec_{j+1}(\mathcal{V}^n_2)^j\cap E^q_{j_1}\cap\dots \cap E^q_{j_t}$ is smooth as well. Moreover, note that 
$$
\dim(\sec_{j+1}(\mathcal{V}^n_2)^j\cap E^q_{j_1}\cap\dots \cap E^q_{j_t}) = \dim(E^q_{j_1}\cap \dots \cap E^q_{j_{t-1}}\cap \sec_{j_t}(\mathcal{V}^n_2)^{j_t-1})+ \dim(\sec_{j-j_t+1}(\mathcal{V}^{n-j_t})^{j-j_t})
$$
and 
$$\dim(E^q_{j_1}\cap \dots \cap E^q_{j_{t-1}}\cap \sec_{j_t}(\mathcal{V}^n_2)^{j_t-1}) = \frac{2nj_t-j_t^2+3j_t-2}{2}-(t-1)$$
yield that $\dim(\sec_{j+1}(\mathcal{V}^n_2)^j\cap E^q_{j_1}\cap\dots \cap E^q_{j_t})$ is given by
$$
\begin{array}{l}
\frac{2nj_t-j_t^2+3j_t-2}{2}-(t-1) + \frac{2(n-j_t)(j-j_t+1)-(j-j_t+1)^2+3(j-j_t+1)-2}{2} = \\ 
\frac{2n(j+1)-(j+1)^2+3(j+1)-2}{2}-t = \dim(\sec_{j+1}(\mathcal{V}^n_2)^j) - t
\end{array} 
$$
and hence the intersection $\sec_{j+1}(\mathcal{V}^n_2)^j\cap E^q_{j_1}\cap\dots \cap E^q_{j_t}$ is transversal.

In the following we prove the claim about the orbit closures. If an orbit closure in $\mathcal{Q}(n)_i$ is not contained in the exceptional divisor $E^q_i$ then it is the strict transform of an orbit closure in $\mathcal{Q}(n)_i$, and hence it is given as an intersection among $E^q_1,\dots, E^q_{i-1},\sec_{i+1}(\mathcal{V}^n_2)^i,\dots, \sec_{n}(\mathcal{V}^n_2)^i$. 

Now, let us analyze the orbit closures in the exceptional divisor $E^q_i$. The fibers of $E^q_i$ over $\sec_i(\mathcal{V}^n_2)^{i-1}$ are projective spaces of dimension 
$$N_{n-i} = \binom{n-i+2}{2}-1.$$
Moreover, $SL(n+1)$ acts transitively on fibers that lie over the same orbit in $\mathcal{Q}(n)_{i-1}$. Note that by Lemma \ref{tcones} $\sec_h(\mathcal{V}^n_2)^i$ intersects each of these $N_{n-i}$-dimensional projective spaces along $\sec_{h-i}(\mathcal{V}^{n-i}_2)$, and the $SL(n+1)$-action on $\mathcal{Q}(n)_{i}$ in (\ref{acss}) induces the corresponding $SL(n-i+1)$-action on the fibers of $E^q_i$. Finally, the statement on the orbit closures in $\mathcal{Q}(n)_{i-1}$ follows then from the statement on the orbit closures in $\mathcal{Q}(n-i)_{0}$.
\end{proof}

\section{Complete symmetric symplectic forms}\label{CSSF}
From now on we will consider the case $n+1 = 2r$ even. Let $\spn$ be the symplectic group of $2r\times 2r$ symplectic matrices, that is
$$\spn = \{M\in \Hom(V,V) \: | \: M^t \Omega M = \Omega \}$$
where 
\stepcounter{thm}
\begin{equation}\label{symform}
\Omega = \left(
\begin{array}{cc}
0 & I_{r,r} \\ 
-I_{r,r} & 0
\end{array}
\right) 
\end{equation}
is the standard symplectic form. Over an algebraically closed field of characteristic zero the symplectic group is a non-compact, irreducible, simply connected, simple Lie group. 

\begin{Remark}\label{tan_symp}
Let us write a $2r\times 2r$ matrix $M$ as 
$$
M = \left(
\begin{array}{cc}
A & B \\ 
C & D
\end{array} 
\right)
$$
where $A,B,C,D$ are four $r\times r$ matrices. The condition of being symplectic translates then into the following system of equations
$$
\left\lbrace
\begin{array}{lll}
-C^tA+A^tC & = & 0_{r,r};\\ 
-C^tB+A^tD  & = & I_{r,r};\\ 
-D^tA+B^tC  & = & -I_{r,r};\\ 
-D^tB+B^tD & = & 0_{r,r}.
\end{array}\right. 
$$
Considering the transformation
\stepcounter{thm}
\begin{equation}\label{trasl}
\left(\begin{array}{cc}
A & B\\ 
C & D
\end{array}\right)\mapsto
\left(\begin{array}{cc}
A-I_{r,r} & B\\ 
C & D-I_{r,r}
\end{array}\right)
\end{equation}
we get the following relations for the tangent space of $\spn$ at the identity
$$A = -D^t,\: B = B^t,\: C = C^t.$$
Hence, the tangent space of $\spn$ at the identity is the Lie algebra $\mathfrak{sp}(2r,K)$ consisting of $2r\times 2r$ matrices of the form
$$
\left(
\begin{array}{cc}
A & B \\ 
C & -A^t
\end{array} 
\right)
$$  
with $C$ and $B$ symmetric. In particular, $\dim(\spn) = r^2+2\frac{r(r+1)}{2} = r(2r+1)$.
\end{Remark}

\begin{Remark}\label{borelsym}
By \cite[Section 1]{ou} the Borel subgroup of the symplectic group can be described as follows:  
$$\mathscr{B}= \Bigr\{
\begin{pmatrix}
A & 0_{r,r}\\
B & A^{-t}
\end{pmatrix} 
\text{ with }  A^tB=B^tA\Bigl\}$$
\end{Remark}
where $A \in GL(r)$ is lower triangular and $B$ is a general $r \times r$  matrix. Now, $\spn$ is a subgroup of $SL(n+1)$ and the $SL(n+1)$-action (\ref{acss}) restricts to the following $\spn$-action:
\stepcounter{thm}
\begin{equation}\label{acsimp}
\begin{array}{ccc}
\spn\times \mathbb{P}^{N} & \longrightarrow & \mathbb{P}^{N}\\
(M,Z) & \longmapsto & MZM^{t}
\end{array}
\end{equation}
We denote by $O_{2r}$ the $\spn$-orbit of the identity in $\mathbb{P}^{N}$ and by $X_{2r} = \overline{O_{2r}}\subseteq\mathbb{P}^{N}$ its closure. 

\begin{Proposition}\label{orb_dim}
Let $Y_k = \overline{O_k}\subset\mathbb{P}^{N}$ be the closure of the $Sp(2r)$-orbit of the matrix
$$I_k = \left(\begin{array}{cc}
I_{k,k} & 0_{k,2r-k}\\ 
0_{2r-k,k} & 0_{2r-k,2r-k}
\end{array}\right)$$
via the action in (\ref{acss}). If $k\leq r$ then 
$$\dim(Y_k) = r(2r+1)-\frac{k(k-1)}{2}-r(r-k)-\frac{r(r+1)}{2}-\frac{(r-k)(r-k+1)}{2}-1=2rk+k-k^2-1.$$
Finally, $\dim(Y_{2r}) = r(r+1)$. 
\end{Proposition}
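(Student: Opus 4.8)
The plan is to realize $Y_k$ as the closure of the homogeneous space $\spn/\Stab([I_k])$ and to reduce the entire statement to a dimension count for a stabilizer. Since $Y_k=\overline{O_k}$ has the same dimension as the orbit $O_k\cong\spn/\Stab([I_k])$, and we work in characteristic zero, we have $\dim(Y_k)=\dim(\spn)-\dim(\Stab([I_k]))$, where $\dim(\Stab([I_k]))$ equals the dimension of its Lie algebra. Here $\Stab([I_k])=\{M\in\spn\mid MI_kM^t\in K\cdot I_k\}$ is the stabilizer of the \emph{point} $[I_k]\in\mathbb{P}^N$ for the action (\ref{acsimp}), and by Remark \ref{tan_symp} we already know $\dim(\spn)=r(2r+1)$. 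Thus everything comes down to computing the infinitesimal stabilizer
$$\mathfrak{s}_k=\{X\in\mathfrak{sp}(2r,K)\mid XI_k+I_kX^t\in K\cdot I_k\}.$$

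Next I would make the defining condition of $\mathfrak{s}_k$ explicit by differentiating the orbit map at the identity, which sends $X$ to $XI_k+I_kX^t$. A direct computation shows that the $(i,j)$ entry of $XI_k+I_kX^t$ equals $X_{ij}+X_{ji}$ when $i,j\le k$, equals $X_{ij}$ or $X_{ji}$ on the entries having exactly one index $\le k$, and vanishes when $i,j>k$. Hence, for $k\le r$, the requirement $XI_k+I_kX^t=\mu I_k$ amounts to asking that the top-left $k\times k$ block of $X$ have symmetric part equal to $\tfrac{\mu}{2}I_k$, that every entry of $X$ with row index $>k$ and column index $\le k$ vanish, while entries with row index $\le k$ and column index $>k$ remain free. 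Writing $X=\left(\begin{smallmatrix}A & B\\ C & -A^t\end{smallmatrix}\right)$ with $B,C$ symmetric as in Remark \ref{tan_symp}, and splitting each half of $\{1,\dots,2r\}$ into the blocks $\alpha=\{1,\dots,k\}$ and $\beta=\{k+1,\dots,r\}$, these conditions read: $A_{\alpha\alpha}+A_{\alpha\alpha}^t=\mu I_k$, $A_{\beta\alpha}=0$, and the first $k$ columns of $C$ vanish (hence, by symmetry of $C$, its first $k$ rows as well), so that only $C_{\beta\beta}$ survives; the block $B$ and the blocks $A_{\alpha\beta},A_{\beta\beta}$ are unconstrained beyond the symmetry of $B$.

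The dimension of $\mathfrak{s}_k$ is then obtained by a block-by-block count that reproduces exactly the summands in the statement: the skew part of $A_{\alpha\alpha}$ contributes $\tfrac{k(k-1)}{2}$ and the scalar $\mu$ contributes the trailing $1$; the blocks $A_{\alpha\beta}$ and $A_{\beta\beta}$ contribute $k(r-k)+(r-k)^2=r(r-k)$; the symmetric matrix $B$ contributes $\tfrac{r(r+1)}{2}$; and the surviving symmetric block $C_{\beta\beta}$ of size $r-k$ contributes $\tfrac{(r-k)(r-k+1)}{2}$. Summing these and subtracting from $\dim(\spn)=r(2r+1)$ yields
$$\dim(Y_k)=r(2r+1)-\tfrac{k(k-1)}{2}-r(r-k)-\tfrac{r(r+1)}{2}-\tfrac{(r-k)(r-k+1)}{2}-1=2rk+k-k^2-1,$$
the last equality being a routine simplification.

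For the final assertion $\dim(Y_{2r})=r(r+1)$ the block decomposition degenerates, since $I_{2r}=\mathrm{Id}$ and the condition becomes simply $X+X^t=\mu\,\mathrm{Id}$. Now the symplectic shape $X=\left(\begin{smallmatrix}A & B\\ C & -A^t\end{smallmatrix}\right)$ forces the two diagonal blocks to satisfy $A+A^t=\mu I_r$ and $-(A+A^t)=\mu I_r$ simultaneously, whence $\mu=0$; thus $A$ is skew and $C=-B$ with $B$ symmetric, giving $\dim(\mathfrak{s}_{2r})=\tfrac{r(r-1)}{2}+\tfrac{r(r+1)}{2}=r^2$ and $\dim(Y_{2r})=r(2r+1)-r^2=r(r+1)$. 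The step requiring the most care is the translation of the projective stabilizer condition into the vanishing of the correct blocks together with the bookkeeping of the scalar $\mu$: in the range $k\le r$ the constraint only touches the top-left block, so $\mu$ is free and accounts for the $+1$, whereas for $k=2r$ it also constrains the bottom-right block and forces $\mu=0$, which is precisely what changes the count.
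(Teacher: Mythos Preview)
Your proof is correct and follows essentially the same approach as the paper: both arguments compute $\dim Y_k$ by determining the tangent space of the stabilizer of $[I_k]$ at the identity, arrive at the identical block conditions on $A,B,C$ inside $\mathfrak{sp}(2r,K)$, and produce the same dimension count term by term. The only cosmetic difference is that the paper separates the scalar via an incidence correspondence $\mathcal{I}\subset\spn\times K^*$ (computing $\dim H_1$ and then adding $1$, and for $k=2r$ arguing at the group level that $\lambda$ takes finitely many values), whereas you handle the scalar $\mu$ directly inside the Lie algebra condition $XI_k+I_kX^t\in K\cdot I_k$ and observe that $\mu$ is forced to vanish when $k=2r$; these are equivalent ways of bookkeeping the same extra parameter.
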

\begin{proof}
Our aim is to compute the dimension of the stabilizer $H\subset \spn$ of $I_k$. Consider the incidence correspondence 
\[
  \begin{tikzpicture}[xscale=1.5,yscale=-1.5]
    \node (A0_1) at (1, 0) {$\mathcal{I} = \{(M,\lambda)\: | \: MI_k M^t = \lambda I_k\}\subseteq \spn \times K^{*}$};
    \node (A1_0) at (0, 1) {$\spn$};
    \node (A1_2) at (2, 1) {$K^{*}$};
    \path (A0_1) edge [->]node [auto] {$\scriptstyle{\psi}$} (A1_2);
    \path (A0_1) edge [->]node [auto,swap] {$\scriptstyle{\phi}$} (A1_0);
  \end{tikzpicture}
\]
Note that the fibers of $\psi$ are isomorphic subgroups of $\spn$. We will compute the dimension of $H_1 = \psi^{-1}(1)$ and then the dimension of $H = \phi(\mathcal{I})$ will be given by
\stepcounter{thm}
\begin{equation}\label{dim_stab}
\dim(H) = \dim(\mathcal{I}) = \dim(H_1)+1.
\end{equation}
Consider first the case $k\leq r$. Subdivide as usual the matrices in $\spn$ in four $r\times r$ blocks and write the matrix whose orbit we want to study as 
$$\left(\begin{array}{cc}
Z_{k} & 0_{r,r}\\ 
0_{r,r} & 0_{r,r}
\end{array}\right)$$
where $Z_k$ is the following $r\times r$ matrix
$$Z_k = \left(\begin{array}{cc}
I_{k,k} & 0_{k,r-k}\\ 
0_{r-k,k} & 0_{r-k,r-k}
\end{array}\right).$$
Now, we have 
$$\left(\begin{array}{cc}
A & B\\ 
C & D
\end{array}\right)\left(\begin{array}{cc}
Z_{k} & 0_{r,r}\\ 
0_{r,r} & 0_{r,r}
\end{array}\right)
\left(\begin{array}{cc}
A^{t} & C^{t}\\ 
B^{t} & D^{t}
\end{array}\right) = 
\left(\begin{array}{cc}
AZ_{k}A^{t} & AZ_{k}C^{t}\\ 
CZ_{k}A^{t} & CZ_{k}C^{t}
\end{array}\right).
$$
Subdivide the matrix $A$ in blocks as follows
$$A = \left(\begin{array}{cc}
A_{k,k} & A_{k,r-k}\\ 
A_{r-k,k} & A_{r-k,r-k}
\end{array}\right).
$$
Then 
$$
\left(\begin{array}{cc}
A_{k,k} & A_{k,r-k}\\ 
A_{r-k,k} & A_{r-k,r-k}
\end{array}\right)
\left(\begin{array}{cc}
I_{k,k} & 0_{k,r-k}\\ 
0_{r-k,k} & 0_{r-k,r-k}
\end{array}\right)
\left(\begin{array}{cc}
A_{k,k}^t & A_{r-k,k}^t\\ 
A_{k,r-k}^t & A_{r-k,r-k}^t
\end{array}\right) = 
\left(\begin{array}{cc}
A_{k,k}A_{k,k}^t & A_{k,k}A_{r-k,k}^t\\ 
A_{r-k,k}A_{k,k}^t & A_{r-k,k}A_{r-k,k}^t
\end{array}\right).
$$
Therefore, considering the transformation (\ref{trasl}) we get the following relations for the tangent space of $H_1$ at the identity
$$A_{k,k} = -A_{k,k}^t, \: A_{r-k,k} = 0_{r-k,k}.$$
Moreover, subdividing $C$ as we did for $A$, we get that the matrix
$$
\left(\begin{array}{cc}
A_{k,k} & A_{k,r-k}\\ 
A_{r-k,k} & A_{r-k,r-k}
\end{array}\right)
\left(\begin{array}{cc}
I_{k,k} & 0_{k,r-k}\\ 
0_{r-k,k} & 0_{r-k,r-k}
\end{array}\right)
\left(\begin{array}{cc}
C_{k,k}^t & C_{r-k,k}^t\\ 
C_{k,r-k}^t & C_{r-k,r-k}^t
\end{array}\right) = 
\left(\begin{array}{cc}
A_{k,k}C_{k,k}^t & A_{k,k}C_{r-k,k}^t\\ 
A_{r-k,k}C_{k,k}^t & A_{r-k,k}C_{r-k,k}^t
\end{array}\right)
$$
must be zero. This yields the following further relations for the tangent space of $H_1$ at the identity
$$C_{k,k} = 0_{k,k},\: C_{r-k,k} = 0_{r-k,k}.$$
Plugging-in the relations for the tangent space at the identity of $\spn$ in Remark \ref{tan_symp} we get that the tangent space at the identity of $H_1$ is given by matrices of the form 
$$
\left(\begin{array}{cc}
A & B\\ 
C & -A^t
\end{array}\right)
$$
where $B = B^t$ and $C = C^t$. Note that $C = C^t, C_{k,k} = 0_{k,k},\: C_{r-k,k} = 0_{r-k,k}$ yield $C_{k,r-k} = 0_{k,r-k}$ and $C_{r-k,r-k} = C_{r-k,r-k}^t$.
Hence $A$ depends on $\frac{k(k-1)}{2}+k(r-k)+(r-k)^2$ parameters, $B$ depends on $\frac{r(r+1)}{2}$ parameters and $C$ depends on $\frac{(r-k)(r-k+1)}{2}$ parameters. Then by (\ref{dim_stab}) we get
$$\dim(H) = \frac{k(k-1)}{2}+k(r-k)+(r-k)^2+\frac{r(r+1)}{2}+\frac{(r-k)(r-k+1)}{2}+1$$  
and
$$\dim(Y_k) = \dim(\spn)-\dim(H) = r(2r+1)-\dim(H)$$
yields the formula in the statement. 

Finally, consider the case $k = 2r$, and let $H\subset \spn$ be the stabilizer of the identity matrix. The equality 
$$\left(\begin{array}{cc}
A & B\\ 
C & D
\end{array}\right)\left(
\begin{array}{cc}
I_{r,r} & 0_{r,r}\\ 
0_{r,r} & I_{r,r}
\end{array}\right)
\left(\begin{array}{cc}
A^{t} & C^{t}\\ 
B^{t} & D^{t}
\end{array}\right) = 
\left(\begin{array}{cc}
AA^{t}+BB^t & AC^{t}+BD^t\\ 
CA^{t}+DB^t & CC^{t}+DD^t
\end{array}\right) = 
\left(\begin{array}{cc}
\lambda I_{r,r} & 0_{r,r}\\ 
0_{r,r} & \lambda I_{r,r}
\end{array}\right)
$$
for some $\lambda\in K^{*}$ yields, applying as usual the transformation (\ref{trasl}), the following system of equations
$$
\left\lbrace
\begin{array}{lll}
-C^t(A-I_{r,r})+(A-I_{r,r})^tC & = & 0_{r,r};\\ 
-C^tB^t+(A-I_{r,r})^t(D+I_{r,r}) & = & (D-I_{r,r})^t(A-I_{r,r})-B^tC;\\ 
-(D-I_{r,r})^tB+B^t(D-I_{r,r}) & = & 0_{r,r}.
\end{array}\right. 
$$
Note that if $M\in H$ taking the determinants on both sides of $M^T\Omega M = \lambda\Omega$ we see that $\lambda$ can only take finitely many values. Hence, by Remark \ref{tan_symp} we have the following relations for the tangent space of $H$ at the identity
$$A = -D^t,\: B = B^t,\: C = C^t,\: C = -B^t,\: A = -A^t.$$
Therefore, the tangent space consists of matrices of the following form
$$
\left(\begin{array}{cc}
A & B\\ 
-B^t & -A^t
\end{array}\right)
$$
with $B = B^t$ and $A = -A^t$. We conclude that 
$$\dim(H) = \frac{r(r+1)}{2}+\frac{r(r-1)}{2} = r^2$$
and hence $\dim(Y_{2r}) = r(2r+1)-r^2 = r(r+1)$.
\end{proof}

\begin{Corollary}\label{prop_dim}
The projective variety $X_{2r}$ is irreducible and its dimension is given by $\dim(X_{2r}) = r(r+1)$.
\end{Corollary}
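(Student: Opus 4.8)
The plan is to observe that the Corollary is essentially a repackaging of Proposition \ref{orb_dim} together with the basic fact that orbit closures of connected groups are irreducible. The dimension assertion requires no new work: by the definitions preceding Proposition \ref{orb_dim}, $X_{2r}=\overline{O_{2r}}$ is the closure of the $\spn$-orbit of the identity, and taking $k=2r$ in Proposition \ref{orb_dim} (where $I_{2r}$ is precisely the full identity matrix) gives $Y_{2r}=X_{2r}$ and $\dim(Y_{2r})=r(r+1)$. So the only genuine point to address is irreducibility.

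For irreducibility, I would argue as follows. The $\spn$-action in (\ref{acsimp}) restricts, along the orbit of the identity, to the orbit map
$$
\begin{array}{ccc}
\mu:\spn & \longrightarrow & \mathbb{P}^{N}\\
M & \longmapsto & [M M^{t}],
\end{array}
$$
obtained by setting $Z=I$ in $MZM^t$. This is a morphism of varieties, and its image is exactly $O_{2r}$, so $X_{2r}=\overline{\mu(\spn)}$. As recorded in Section \ref{CSSF}, over an algebraically closed field of characteristic zero $\spn$ is a connected simple Lie group, hence an irreducible variety. Since the image of an irreducible variety under a morphism is irreducible, $O_{2r}=\mu(\spn)$ is irreducible, and therefore so is its Zariski closure $X_{2r}$.

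The only care needed is the identification $X_{2r}=Y_{2r}$, i.e.\ checking that the orbit whose dimension is computed in Proposition \ref{orb_dim} for $k=2r$ is literally the orbit $O_{2r}$ appearing in the definition of $X_{2r}$; this is immediate because $I_{2r}$ is the identity matrix. Consequently there is no substantive obstacle: combining $\dim(X_{2r})=\dim(Y_{2r})=r(r+1)$ with the irreducibility of $\overline{\mu(\spn)}$ completes the proof.
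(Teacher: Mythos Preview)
Your proof is correct and follows essentially the same approach as the paper: irreducibility because $X_{2r}$ is the closure of an orbit of the connected group $\spn$, and the dimension by identifying $X_{2r}=Y_{2r}$ and invoking Proposition \ref{orb_dim}. The paper's proof is simply a terser version of what you wrote.
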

\begin{proof}
The variety $X_{2r}$ is the closure of an $\spn$-orbit, so it is irreducible. Since $X_{2r} = Y_{2r}$ the formula for its dimension follows from Proposition \ref{orb_dim}. 
\end{proof}

\begin{Example}
Consider the case $r = 1$. Then Corollary \ref{prop_dim} yields $\dim(X_{2}) = 2$ and hence $X_{2} = \mathbb{P}^2$. Moreover, $O_{2} = \mathbb{P}^2\setminus C$ where $C\subset\mathbb{P}^2$ is the conic parametrizing rank one matrices. 
\end{Example}

\begin{Remark}\label{equations}
We work out equations for $X_{2r}$. The points of the orbit $O_{2r}$ represent symmetric matrices having a scalar multiple that is symplectic, that is $Z^t \Omega Z = \lambda \Omega$ for some $\lambda\in K^{*}$. The matrix $N =  Z^t \Omega Z$ is skew-symmetric and so $N_{i,i} = 0$ for $i = 0,\dots, 2r-1$. Furthermore, for any $i = 0,\dots,2r-2$ we must have
$$N_{i,i+1} = \dots = N_{i,r+i-1} = N_{i,r+i+1} = \dots = N_{i,2r-1} = 0.$$
This gives $2r-i-2$ quadratic equations for any $i = 0,\dots,r-1$, and $2r-i-1$ quadratic equations for any $i = r,\dots, 2r-1$. Moreover, we must have
$$N_{0,r} = N_{1,r+1} = \dots = N_{r-1,2r-1}$$
and hence we get $r-1$ additional quadratic equations. Summing-up we get 
$$\sum_{i=0}^{r-1} (2r-i-2) + \sum_{i = r}^{2r-1}(2r-i-1)+ r-1 = (2r+1)(r-1)$$
quadratic equations for $X_{2r}$ in $\mathbb{P}^{N}$.

Now, we explicitly compute these equations. Consider a general symmetric matrix $Z=(z_{i,j})_{i,j=0,\dots,2r-1}$ with $z_{i,j}=z_{j,i}$ and the standard symplectic form 
$\Omega$. Then
\begin{align*}
c_{i,j} := (Z \cdot \Omega)_{i,j} = \sum_{k=0}^{2r-1} z_{i,k} \Omega_{k,j} = \begin{cases} z_{i,j-r} & \text{ for } j \ge r; \\ -z_{i,j+r} & \text{ for } j < r; \\ \end{cases}
\end{align*}
and so
\begin{align*}
N_{i,j}:=(Z \cdot \Omega \cdot Z)_{i,j} &= \sum_{k=0}^{2r-1} c_{i,k} z_{k,j} = \sum_{k=0}^{r-1} c_{i,k} z_{k,j} +  \sum_{k=r}^{2r-1} c_{i,k} z_{k,j} = \sum_{k=0}^{r-1} -z_{i,k+r} z_{k,j} +  z_{i,k} z_{k+r,j}.
\end{align*}
Summing-up, the equations
$$\begin{cases}
N_{l,r+l}-N_{l+1,r+l+1}=0 &\text{ for } l=0, \dots, r-2;\\
N_{i,j}=0 &\text{ for } i=0, \dots, 2r-2 , \,  j > i, \, j \neq r+i;\\
\end{cases}$$
can be explicitly written as follow
\begin{equation*}
\begin{footnotesize}
\begin{cases}
\sum_{k=0}^{r-1}-z_{l,k+r}z_{k,r+l}+z_{l,k}z_{k+r,r+l}+z_{l+1,k+r}z_{k,r+l+1}-z_{l+1,k}z_{k+r,r+l+1}=0 &\text{ for } l=0, \dots, r-2;\\
\sum_{k=0}^{r-1} -z_{i,k+r} z_{k,j} +  z_{i,k} z_{k+r,j}=0  &\text{ for } i=0, \dots, 2r-2, \, j > i, \, j \neq r+i.\\
\end{cases}
\end{footnotesize}
\end{equation*}
\end{Remark}

Now, our aim is to construct a wonderful compactification of the space of complete symmetric symplectic forms. 

\begin{Construction}\label{ccssf}
Set $S_h(\vr):=\sec_{h}(\mathcal{V}^{2r-1}_2) \cap \xn$. Let us consider the following sequence of blow-ups:
\begin{itemize}
\item[-] $X_{2r}^{(1)}$ is the blow-up of $X_{2r}^{(0)}:=X_{2r}$ along the Veronese variety $\mathcal{V}^{2r-1}_2 \subset \xn$;
\item[-] $X_{2r}^{(2)}$ is the blow-up of $X_{2r}^{(1)}$ along the strict transform of $S_2(\vr)$;\\
$\vdots$
\item[-] $X_{2r}^{(i)}$ is the blow-up of $X_{2r}^{(i-1)}$ along the strict transform of $S_i(\vr)$;\\
$\vdots$
\item[-] $X_{2r}^{(r-1)}$ is the blow-up of $X_{2r}^{(r-2)}$ along the strict transform of $S_{r-1}(\vr)$.
\end{itemize}
Let $f_i:X_{2r}^{(i)}\rightarrow X_{2r}^{(i-1)}$ be the blow-up morphism. We will denote by $E_i$ both the exceptional divisor of $f_i$ and its strict transforms in the subsequent blow-ups. We set $\xnb:=X_{2r}^{(r-1)}$ and we indicate with $f: \xnb \rightarrow \xn$ the composition of the $f_i$. 
\end{Construction}

Let $M_{2r,2r}(K)$ be the space of $2r \times 2r$ matrices. Following \cite{defa} we define the operator
$$
\begin{array}{cccc}
\Phi_\Omega: & M_{2r,2r}(K) & \longrightarrow & M_{2r,2r}(K)\\
 & A & \mapsto & \Omega^{-1}A^T\Omega
\end{array}
$$
\begin{Definition}
A matrix $A \in \mm$ is symplectically congruent to a matrix $B \in \mm$ if there exists a symplectic matrix $Q$ such that $QAQ^T=B$.
\end{Definition}

By \cite[Theorem 21]{defa} a matrix $A \in \mm$ is symplectically congruent to a diagonal matrix if and only if $A$ is symmetric and $A\Phi_\Omega(A)$ is diagonalizable.

\begin{Proposition}\label{fun}
The quadratic equations in Remark \ref{equations} cut out $X_{2r}$ set-theoretically. Furthermore $Y_i= \sec_{i}(\vr) \cap \xn$, set-theoretically, and there is a stratification 
$$Y_{1}\subset Y_2\subset\dots\subset Y_{r-1}\subset Y_{r}\subset Y_{2r} = X_{2r}.$$ 
In particular, $\dim(\sec_{i}(\vr) \cap \xn) = 2ri+i-i^2-1$ for $i = 1,\dots,r$ and $Y_{r}$ is a divisor in $X_{2r}$.
\end{Proposition}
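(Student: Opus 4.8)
The plan is to recognise the common zero locus of the quadratic equations of Remark \ref{equations} as the set
$$W=\{[Z]\in\mathbb{P}^N \;:\; Z=Z^t,\ Z\Omega Z=\lambda\Omega \ \text{ for some }\lambda\in K\},$$
since those equations say exactly that the skew--symmetric matrix $Z\Omega Z$ is a (possibly zero) scalar multiple of $\Omega$. One inclusion is immediate: for a point $Z=MM^t$ of $O_{2r}$ with $M\in\spn$ one computes $Z\Omega Z=M(M^t\Omega M)M^t=M\Omega M^t=\Omega$ (using $M^t\Omega M=\Omega$, equivalently $M\Omega M^t=\Omega$), so $O_{2r}\subseteq W$, and since $W$ is closed we get $\xn=\overline{O_{2r}}\subseteq W$. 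The whole proposition will follow once I prove the reverse inclusion and, along the way, identify the $\spn$--orbits lying in $W$.

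The key is a rank dichotomy governed by $\lambda$, together with a congruence normalisation. If $\lambda\neq0$, taking determinants in $Z\Omega Z=\lambda\Omega$ gives $(\det Z)^2=\lambda^{2r}\neq0$, so $\rank(Z)=2r$; if $\lambda=0$ then $Z\Omega Z=0$ forces the image of $Z$ to be isotropic for $\Omega$, whence $\rank(Z)\le r$. In particular no point of $W$ has rank strictly between $r$ and $2r$. To classify orbits I would apply \cite[Theorem 21]{defa}: for any $[Z]\in W$, using $\Omega^{-1}=-\Omega$ and $\Omega^2=-I$, one gets $Z\Phi_\Omega(Z)=Z\Omega^{-1}Z\Omega=-(Z\Omega Z)\Omega=-\lambda\Omega^2=\lambda I$, a scalar matrix, hence diagonalizable; therefore every $Z\in W$ is symplectically congruent to a diagonal matrix $D$, which in $W$ satisfies $d_id_{i+r}=\lambda$ for $i=1,\dots,r$. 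Using the symplectic congruences at hand — the scalings $\diag(T,T^{-t})$, the swaps $e_i\leftrightarrow e_{i+r}$, and the pair--permutations $\diag(P_\sigma,P_\sigma)$ — one normalises $D$ to a scalar multiple of the identity when $\lambda\neq0$, and to $I_k$ when $\lambda=0$ and $\rank(Z)=k\le r$. Thus the full--rank part of $W$ is precisely $O_{2r}$, while the $\lambda=0$ part is $\bigcup_{k\le r}O_k$ with $O_k$ the orbit of $I_k$. Finally each $I_k$ lies in $\xn$: the family $D_t=\diag(1^{\,k},t^{\,r-k}\mid t^{2,\,k},t^{\,r-k})$ satisfies $D_t\Omega D_t=t^2\Omega$, hence lies in $O_{2r}$ for $t\neq0$ and tends to $I_k$ as $t\to0$. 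This yields $W\subseteq\xn$, so $W=\xn$ and, set--theoretically, $\xn=O_{2r}\sqcup\bigsqcup_{k=1}^{r}O_k$.

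With this orbit picture the remaining assertions are formal. Since $\rank$ is preserved by the action and constant on orbits, $O_k\subseteq\sec_k(\vr)$ and $Y_k=\overline{O_k}\subseteq\sec_k(\vr)\cap\xn$; conversely any $[Z]\in\sec_k(\vr)\cap\xn$ with $k\le r$ has $\rank(Z)\le k\le r$, hence lies in the $\lambda=0$ part and so in some $O_j$ with $j\le k$, giving $\sec_k(\vr)\cap\xn=\bigcup_{j\le k}O_j$. That this union equals $\overline{O_k}=Y_k$ is checked by realising each $I_j$ $(j\le k)$ as a limit inside $O_k$: the family $\diag(1^{\,j},t^{\,k-j},0^{\,2r-k})$ lies in $O_k$ for $t\neq0$ and tends to $I_j$. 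The inclusions $Y_1\subset\cdots\subset Y_r\subset Y_{2r}=\xn$ follow at once, the dimension formula $\dim(\sec_i(\vr)\cap\xn)=\dim Y_i=2ri+i-i^2-1$ is exactly Proposition \ref{orb_dim}, and for $i=r$ this equals $r(r+1)-1=\dim\xn-1$, so $Y_r$ is a divisor. The genuinely delicate step is the orbit classification of the boundary $\lambda=0$ points: everything there rests on the diagonalisation provided by \cite[Theorem 21]{defa} and, above all, on carrying out the normalisation of the resulting diagonal forms \emph{within} $\spn$, where the bookkeeping with the symplectic scalings, swaps and permutations (and the tangent--space description of Remark \ref{tan_symp}) must be handled carefully.
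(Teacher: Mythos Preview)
Your proposal is correct and follows essentially the same route as the paper: both arguments reduce to \cite[Theorem 21]{defa} to symplectically diagonalise any $Z$ with $Z\Omega Z=\lambda\Omega$, then normalise the diagonal form inside $\spn$ to $I_{2r}$ (when $\lambda\neq0$) or $I_k$ with $k\le r$ (when $\lambda=0$), and finally exhibit explicit one-parameter degenerations to obtain the stratification. Your presentation is slightly more streamlined --- the observation that $Z\Omega Z=0$ makes $\im Z$ isotropic, hence of dimension $\le r$, is a clean way to get the rank dichotomy --- while the paper spells out the symplectic normalisation of diagonal matrices more explicitly (note in particular that the naive swap $e_i\leftrightarrow e_{i+r}$ is not in $\spn$, but the signed swap $e_i\mapsto e_{i+r},\ e_{i+r}\mapsto -e_i$ is, and achieves the same effect under congruence).
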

\begin{proof}
Let $Z$ be a symmetric matrix satisfying the equations in Remark \ref{equations}. Then we have two cases:
\begin{itemize}
\item[(i)] $N_{0,r} = \dots = N_{r-1,2r-1} = \lambda\in K^{*}$;
\item[(ii)] $N_{0,r} = \dots = N_{r-1,2r-1} = 0$.
\end{itemize}
Consider (i). Then $Z^{t}\Omega Z = \lambda\Omega$ and $\det(Z) \neq 0$. Moreover,
$$Z\Phi_\Omega(Z) = Z\Omega^{-1}Z^{t}\Omega = -Z^{t}\Omega Z\Omega = -\lambda\Omega^2 = \lambda I_{2r,2r}$$
and by \cite[Theorem 21]{defa} $Z$ is symplectically congruent to a diagonal matrix.

In case (ii) $Z^{t}\Omega Z$ is the zero matrix. So $\det(Z) = 0$, and $Z\Phi_\Omega(Z)$ is the zero matrix as well. Again, \cite[Theorem 21]{defa} yields that $Z$ is symplectically congruent to a diagonal matrix.

So if $Z$ is a symmetric matrix satisfying the equations in Remark \ref{equations} there is a symplectic matrix $Q$ such that $QZQ^{t} = D$ with $D$ diagonal. Our aim is to prove that $D$ can be moved to a matrix of the form $I_k$, where $k$ is the rank of $D$, with the action of the symplectic group. 

Let $D_{\alpha} = \diag(\alpha_1,\dots,\alpha_{2r})$ be a diagonal matrix satisfying the equations in Remark \ref{equations}. Then either $\alpha_i\alpha_{r+i} = 0$ for all $i = 1,\dots,r$, or $\alpha_i  \alpha_{r+i} = \lambda\in K^{*}$ for all $i = 1,\dots,r$. Write
$$
D_{\alpha} = \left(
\begin{array}{cc}
D_{\alpha_1,\dots,\alpha_p} & 0_{r,r} \\ 
0_{r,r} & D_{\alpha_{p+1},\dots,\alpha_{p+q}}
\end{array} \right)
$$
with $p+q\leq r$, where $D_{\alpha_1,\dots,\alpha_p}$ is an $r\times r$ diagonal matrix with the $\alpha_i$ appearing on the diagonal, and similarly for $D_{\alpha_{p+1},\dots,\alpha_{p+q}}$. Note that up to permuting the upper and lower diagonal simultaneously we may assume that $\alpha_1,\dots,\alpha_p$ are the first $p$ entries on the diagonal of $D_{\alpha_1,\dots,\alpha_p}$, and $\alpha_{p+1},\dots,\alpha_{p+q}$ are the last $q$ entries on the diagonal of $D_{\alpha_{p+1},\dots,\alpha_{p+q}}$. 

Now, set $p+q = r$. Let $A,B,C,D$ be $r\times r$ matrices defined as follows:
\begin{itemize}
\item[-] the first $p$ entries on the diagonal of $A$ are $a_i \in K^{*}$ for $i = 1,\dots,p$, and the other entries are zero;
\item[-] the last $q$ entries on the diagonal of $B$ are $-b_i^{-1} \in K^{*}$ for $i = p+1,\dots,p+q$, and the other entries are zero;
\item[-] the last $q$ entries on the diagonal of $C$ are $b_i\in K^{*}$ for $i = p+1,\dots,p+q$, and the other entries are zero;
\item[-] the first $p$ entries on the diagonal of $D$ are $a_i^{-1}\in K^{*}$ for $i = 1,\dots,p$, and the other entries are zero.
\end{itemize}
Consider the matrix
$$
P = \left(
\begin{array}{cc}
A & B \\ 
C & D
\end{array}\right) 
$$
and note that $P$ is symplectic. Furthermore, by taking $a_i,b_j$ such that $a_i^2 = \alpha_i$ for $i = 1,\dots,p$, and $b_j^{-2} = \alpha_j$ for $j = p+1,\dots,p+q$ we have $P^{t}I_{r}P = D_{\alpha}$ when $p+q = r$. 

If $p+q < r$ by permuting the upper diagonal of $I_{p+q}$, we transform $I_{p+q}$ into the matrix $I_{p+q}^{*}$ whose entries on the diagonal are $(I_{p+q}^{*})_{i,i} = 1$ for $i = 1,\dots, p$, $(I_{p+q}^{*})_{i,i} = 0$ for $i = p+1,\dots, p+s$, $(I_{p+q}^{*})_{i,i} = 1$ for $i = p+s+1,\dots, p+s+q$, and $(I_{p+q}^{*})_{i,i} = 0$ for $i = p+s+q+1,\dots, 2r$, where $p+s+q = r$. In this case consider $r\times r$ diagonal matrices $\overline{A},\overline{B},\overline{C},\overline{D}$ such that
\begin{itemize}
\item[-] the first $p$ entries on the diagonal of $\overline{A}$ are $a_i \in K^{*}$ for $i = 1,\dots,p$, $(\overline{A})_{i,i} = 1$ for $i = p+1,\dots,p+s$, $(\overline{A})_{i,i} = 0$ for $i = p+s+1,\dots,p+s+q$;
\item[-] the first $p+s$ entries on the diagonal of $\overline{B}$ are zero, followed by $-b_{p+1}^{-1},\dots,-b_{p+q}^{-1}$;
\item[-] the first $p+s$ entries on the diagonal of $\overline{C}$ are zero, followed by $b_{p+1},\dots,b_{p+q}$;
\item[-] the first $p$ entries on the diagonal of $\overline{D}$ are $a_i^{-1} \in K^{*}$ for $i = 1,\dots,p$, $(\overline{D})_{i,i} = 1$ for $i = p+1,\dots,p+s$, $(\overline{D})_{i,i} = 0$ for $i = p+s+1,\dots,p+s+q$;
\end{itemize}
and set 
$$
\overline{P} = \left(
\begin{array}{cc}
\overline{A} & \overline{B} \\ 
\overline{C} & \overline{D}
\end{array}\right). 
$$
Then $\overline{P}$ is symplectic and by taking again $a_i,b_j$ such that $a_i^2 = \alpha_i$ for $i = 1,\dots,p$, and $b_j^{-2} = \alpha_j$ for $j = p+1,\dots,p+q$ it holds $\overline{P}^{t}I_{p+q}^{*}\overline{P} = D_{\alpha}$.

Furthermore, when $D_{\alpha}$ is of maximal rank we consider the diagonal symplectic matrix 
$$P = \diag(a_{1},\dots,a_r,a_{1}^{-1},\dots,a_{r}^{-1}).$$
Note that taking $a_{i}\in K^{*}$ such that $a_i^2 = \alpha_i\mu^{-1}$, with $\mu^2 = \lambda$, for $i = 1,\dots,r$, we get that $P^{t}I_{2r,2r}P$ is a scalar multiple of $D_{\alpha}$. Consider the matrices
$$
\Psi_t = \left(
\begin{array}{cc}
I_{r,r} & 0_{r,r} \\ 
0_{r,r} & T_{r,r}
\end{array} 
\right);\quad
\Lambda_t = \left(
\begin{array}{ccc}
I_{k,k} & 0 & 0_{k,2r-k-1}\\ 
0 & t & 0_{1,2r-k-1}\\ 
0_{2r-k-1,k} & 0_{2r-k-1,1} & 0_{2r-k-1,2r-k-1}
\end{array} \right) \quad \text{for}\: k = 1,\dots,r-1
$$
where $T_{r,r} = \diag(t,\dots,t)$. By the first part of the proof we have that $\{\Psi_t\}_{t\in K^{*}}$ is a family of matrices in $O_{2r}$, and $\lim_{t\mapsto 0}\Psi_t = I_{r}$. Furthermore, $\{\Lambda_t\}_{t\in K^{*}}$ is a family of matrices in $O_{k+1}$, and $\lim_{t\mapsto 0}\Lambda_{t} = I_{k}$ for $k = 1,\dots,r-1$.

Summing-up we proved that if $Z$ is a symmetric matrix for rank $k$ with $1\leq k\leq r$ or $k = 2r$ satisfying the equations in Remark \ref{equations} then $Z$ can be symplectically transformed into the matrix $I_k$, and hence it lies in $O_k$.
\end{proof}

\begin{Remark}\label{Ver_c}
Proposition \ref{fun} yields that the Veronese variety $\vr$ is contained in $\xn$. On the other hand, for $h\geq 2$ the secant variety $\sec_h(\vr)$ is not contained in $\xn$.
\end{Remark}

\begin{Proposition}\label{sec}
For any $k \ge r$ we have that $\xn \cap \sec_{r}(\vr) = \xn \cap \sec_{k}(\vr)$
\end{Proposition}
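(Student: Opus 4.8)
The plan is to reduce the statement to the rank dichotomy announced in the introduction: every matrix of $\xn$ has rank at most $r$ or rank exactly $2r$. Recall that $\sec_h(\vr)$ is, set-theoretically, the locus of symmetric matrices of rank at most $h$, so $\sec_r(\vr)\subseteq\sec_k(\vr)$ for every $k\geq r$ and hence $\xn\cap\sec_r(\vr)\subseteq\xn\cap\sec_k(\vr)$ with no work. The content is the reverse inclusion, and it is meaningful precisely in the range $r\leq k\leq 2r-1$: for $k\geq 2r$ one has $\sec_k(\vr)=\mathbb{P}^N$ and the intersection is all of $\xn$. So I would fix $r\leq k\leq 2r-1$ and show that a matrix $M\in\xn$ with $\rank(M)\leq k$ necessarily has $\rank(M)\leq r$.

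First I would invoke Proposition \ref{fun}: since the equations of Remark \ref{equations} cut out $\xn$ set-theoretically, any $M\in\xn$ satisfies them, and the argument in the proof of Proposition \ref{fun} produces a symplectic matrix $Q$ with $QMQ^{t}=D_\alpha$ diagonal. Because $\xn$ is $\spn$-invariant, $D_\alpha=\diag(\alpha_1,\dots,\alpha_{2r})$ again satisfies the equations of Remark \ref{equations}; and because $Q$ is invertible, $\rank(M)=\rank(D_\alpha)$.

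The key step is the rigidity forced by the equations on the diagonal entries. Evaluating the quantities $N_{i,r+i}$ of Remark \ref{equations} on the diagonal matrix $D_\alpha$ gives $N_{i,r+i}=\alpha_i\alpha_{r+i}$, while all the off-diagonal equations $N_{i,j}=0$ (for $j\neq r+i$) are satisfied automatically by a diagonal matrix. The equations $N_{l,r+l}-N_{l+1,r+l+1}=0$ then say that the $r$ products $\alpha_i\alpha_{r+i}$, $i=1,\dots,r$, share a common value $\lambda$. Thus either $\lambda\in K^{*}$, in which case every $\alpha_i$ is nonzero and $\rank(D_\alpha)=2r$, or $\lambda=0$, in which case each pair $\{\alpha_i,\alpha_{r+i}\}$ contains a zero, so at most $r$ of the $2r$ diagonal entries are nonzero and $\rank(D_\alpha)\leq r$. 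This is exactly the dichotomy, already implicit in the case analysis of Proposition \ref{fun}.

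With the dichotomy in hand the proposition is immediate: if $M\in\xn\cap\sec_k(\vr)$ with $r\leq k\leq 2r-1$, then $\rank(M)\leq k<2r$, so the full-rank alternative is excluded and $\rank(M)\leq r$, i.e. $M\in\xn\cap\sec_r(\vr)$. The only genuinely non-formal points are the claim that symplectic diagonalization preserves the defining equations (which follows from $\spn$-invariance of $\xn$) together with the computation $N_{i,r+i}=\alpha_i\alpha_{r+i}$ and the vanishing of the remaining $N_{i,j}$ on diagonal matrices; once these are granted there is no further obstacle, so I expect the routine verification of those evaluations to be the main, and only, thing to check carefully.
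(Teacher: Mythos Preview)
Your proof is correct and follows essentially the same route as the paper's: both reduce to the diagonal case via symplectic congruence (invoking Proposition~\ref{fun}) and then use the equations $N_{l,r+l}=N_{l+1,r+l+1}$ from Remark~\ref{equations} on a diagonal matrix to force the rank dichotomy. Your version is more explicit about the evaluation $N_{i,r+i}=\alpha_i\alpha_{r+i}$ and about restricting to $r\leq k\leq 2r-1$, but the underlying argument is the same.
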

\begin{proof}
Assume there is a matrix $M \in \xn \cap \sec_{k}(\vr)$ of rank $r< k < 2r-1$. Arguing as in the proof of Proposition \ref{fun} we can move $M$ with the action of $\spn$ in a diagonal matrix $D_k$ of rank $k$, and $D_{k}\in \xn \cap \sec_{k}(\vr)$. However, $D_{k}$ does not satisfy the equation $N_{j,r+j}-N_{j+1, r+j+1}=0$ for $X_{2r}$ in Remark \ref{equations}. A contradiction. 
\end{proof}

We analyze in detail the geometry of the objects we introduced in the first non trivial case, namely $r = 2$.

\begin{Proposition}\label{x4g14}
The variety $X_4$ is isomorphic to the Grassmannian $\mathbb{G}(1,4)\subset\mathbb{P}^9$ of lines in $\mathbb{P}^4$. Furthermore, $\mathcal{V}_2^{3}\subset X_4$, and $S_2(\mathcal{V}_2^{3})\subset X_4$ is an irreducible and reduced divisor singular along $\mathcal{V}_2^{3}$. In particular, the equations in Remark \ref{equations} cut out $X_4$ scheme-theoretically, and $S_2(\mathcal{V}_2^{3}) = Y_2$ scheme-theoretically.  
\end{Proposition}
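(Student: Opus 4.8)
The plan is to deduce the whole proposition from the exceptional isomorphism $Sp(4)\to SO(5)$ together with the set-theoretic description of $X_4$ provided by Proposition \ref{fun}. First I would realize the target geometrically. Writing $V=K^4$ with its symplectic form, contraction with $\Omega$ splits $\wedge^2 V = K\cdot\Omega\oplus W$, where $W=\wedge^2_0 V$ is the $5$-dimensional irreducible $Sp(4)$-representation on which $Sp(4)$ acts through $SO(5)$, preserving the quadratic form $\omega\mapsto\omega\wedge\omega$. Then $\mathbb{P}(W)=\mathbb{P}^4$ and $\mathbb{G}(1,4)=\mathbb{G}(2,W)\subset\mathbb{P}(\wedge^2 W)=\mathbb{P}^9$. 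Since the space of symmetric matrices is $\Sym^2 V\cong\mathfrak{sp}(4)$ and $\wedge^2 W\cong\mathfrak{so}(5)$ are both the $10$-dimensional adjoint representation, there is an essentially unique $Sp(4)$-equivariant linear isomorphism $\Theta:\mathbb{P}(\Sym^2 V)\xrightarrow{\sim}\mathbb{P}(\wedge^2 W)$.

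To prove $X_4\cong\mathbb{G}(1,4)$ I would show $\Theta(X_4)=\mathbb{G}(2,W)$. As $\Theta$ is equivariant and $X_4=\overline{Sp(4)\cdot[I]}$, and $\mathbb{G}(2,W)$ is $SO(5)$-invariant hence $Sp(4)$-invariant, it suffices to check that $\Theta([I])$ is a decomposable bivector: then $\Theta(X_4)=\overline{Sp(4)\cdot\Theta([I])}\subseteq\mathbb{G}(2,W)$, and since both sides are irreducible of dimension $6$ (Corollary \ref{prop_dim}) they coincide. The point $[I]$ corresponds under $\Sym^2 V\cong\mathfrak{sp}(4)$ to the semisimple element $\Omega^{-1}$, whose eigenvalues are $i,i,-i,-i$; tracking these through the identification of Cartan subalgebras (the coordinate change $(x_1,x_2)\mapsto(x_1+x_2,x_1-x_2)$ relating $C_2$ and $B_2$) shows that the associated element of $\mathfrak{so}(5)=\wedge^2 W$ has eigenvalues $\pm 2i,0,0,0$, hence is a rank-two, i.e.\ decomposable, bivector lying on $\mathbb{G}(2,W)$. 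This is the one genuinely delicate bookkeeping step, and I expect it to be the main obstacle; a fully explicit alternative is to write the five quadrics $N_{0,1},N_{0,3},N_{1,2},N_{2,3},N_{0,2}-N_{1,3}$ of Remark \ref{equations} out in the $z_{i,j}$ and produce by hand a linear change of coordinates carrying them to the five Pl\"ucker relations of $\mathbb{G}(2,5)$.

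Once $X_4=\mathbb{G}(1,4)$ is established, the scheme-theoretic assertion follows formally: the five quadrics of Remark \ref{equations} are linearly independent and vanish on $X_4=\mathbb{G}(2,5)$, and since the space of quadrics through $\mathbb{G}(2,5)\subset\mathbb{P}^9$ is exactly $5$-dimensional and generates the saturated Pl\"ucker ideal, these five quadrics generate $I_{X_4}$ and cut out $X_4$ scheme-theoretically. The inclusion $\mathcal{V}_2^3\subset X_4$ is already contained in Remark \ref{Ver_c}; under $\Theta$ the minimal orbit $\mathcal{V}_2^3=Y_1\cong\mathbb{P}^3$ is identified with the closed $SO(5)$-orbit in $\mathbb{G}(2,W)$, namely the family of lines lying on the quadric threefold $Q=\{\omega\wedge\omega=0\}\subset\mathbb{P}(W)$.

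Finally, for the statements about $S_2(\mathcal{V}_2^3)=\sec_2(\mathcal{V}_2^3)\cap X_4$ I would transport this locus through $\Theta$ and identify it with the discriminant divisor of lines tangent to $Q$ inside $\mathbb{G}(2,W)$: this is an irreducible, reduced hypersurface in the Grassmannian whose singular locus is precisely the locus of lines contained in $Q$, that is $\mathcal{V}_2^3$, which gives the claimed irreducibility, reducedness, and singularity along $\mathcal{V}_2^3$. For the scheme-theoretic equality $S_2(\mathcal{V}_2^3)=Y_2$, I would note that $Y_2$ is the reduced irreducible orbit closure of dimension $5$ (Proposition \ref{orb_dim}), which equals $\sec_2(\mathcal{V}_2^3)\cap X_4$ set-theoretically by Proposition \ref{fun}; since the tangent-line discriminant is reduced, the intersection carries no embedded or multiple structure and the two agree as schemes. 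Alternatively, one can obtain the reducedness and the singular locus directly by tangent-cone computations at general points of $Y_1$ and $Y_2$ in the spirit of Proposition \ref{tcones}.
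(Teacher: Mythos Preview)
Your approach is correct and takes a genuinely different route from the paper. The paper proceeds entirely by hand: it writes down an explicit linear change of coordinates carrying the five quadrics of Remark~\ref{equations} to the five Pl\"ucker relations of $\mathbb{G}(1,4)\subset\mathbb{P}^9$ (precisely the ``fully explicit alternative'' you mention), and then obtains the irreducibility, reducedness and singular locus of $S_2(\mathcal{V}_2^3)$ by direct tangent-cone computations at a rank-one and a rank-two point. Your argument instead explains the isomorphism conceptually via the exceptional identification $\mathfrak{sp}(4)\cong\mathfrak{so}(5)$ of adjoint representations, verifying $\Theta([I])\in\mathbb{G}(2,W)$ through the eigenvalue/Cartan bookkeeping; this has the virtue of exhibiting $\mathcal{V}_2^3$ and $Y_2$ intrinsically as the varieties of lines contained in, respectively tangent to, the invariant quadric $Q\subset\mathbb{P}(W)$. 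The one soft spot is the passage from the set-theoretic identification of $S_2(\mathcal{V}_2^3)$ with the tangent-line locus to the scheme-theoretic claim: knowing that the tangent-line divisor is reduced does not by itself show that the scheme $\sec_2(\mathcal{V}_2^3)\cap X_4$ carries no multiplicity along it, since you have not matched $\Theta(\sec_2(\mathcal{V}_2^3))$ with any specific divisor class on $\mathbb{G}(2,W)$. Your stated alternative, the tangent-cone computation at a rank-two point, closes this gap and is exactly what the paper does. In short, the paper's proof is quicker and self-contained; yours gives the underlying geometric reason and a dictionary $(\mathcal{V}_2^3,\,Y_2,\,O_8)\leftrightarrow(\text{lines in }Q,\ \text{tangent lines},\ \text{secant lines})$ that is illuminating even if it requires the same local computation at the end.
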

\begin{proof}
Consider homogeneous coordinates $z_{i,j}$ on $\mathbb{P}^9$ and identify them with the entries of a general $4\times 4$ symmetric matrix $Z$. The change of variables $z_{0,0}\mapsto z_{1,2}, z_{0,1}\mapsto z_{0,1}, z_{0,2}\mapsto \frac{z_{1,4}-z_{2,3}}{2}, z_{0,3}\mapsto z_{0,2}, z_{1,1}\mapsto z_{1,3}, z_{1,2}\mapsto z_{0,3}, z_{1,3}\mapsto \frac{z_{1,4}+z_{2,3}}{2}, z_{2,2}\mapsto z_{3,4}, z_{2,3}\mapsto z_{0,4}, z_{3,3}\mapsto z_{2,4}$ transforms the five equations in Remark \ref{equations} into the standard Pl\"ucker equations cutting out $\mathbb{G}(1,4)$ in $\mathbb{P}^9$. 

By Remark \ref{Ver_c} we have $\mathcal{V}_2^{3}\subset X_4$. We can compute the tangent cones of $S_2(\mathcal{V}_2^{3})\subset X_4$ at a point representing a rank one matrix, and at a point representing a rank two matrix using the equations for $X_{4}$ in Remark \ref{equations} together with the equations cutting out $\sec_{2}(\mathcal{V}_2^{3})$. In the first case we get a cone with a $3$-dimensional vertex over $\mathcal{V}_1^{2}$ which in particular is irreducible and reduced, and in the second case we get a $5$-dimensional linear space. Finally, since by Proposition \ref{fun} $Y_2$ has dimension five we conclude that the equations in Remark \ref{equations} together with the equations cutting out $\sec_{2}(\mathcal{V}_2^{3})$ define $Y_2$ scheme-theoretically. 
\end{proof}

\begin{Remark}
The variety $X_4\cong\mathbb{G}(1,4)$ has been studied in relation to moduli spaces of rank two vector bundles over a smooth quadric \cite[Table I]{OS94}.
\end{Remark}

\begin{Proposition}\label{symplcones}
The tangent cone $TC_{p_k}(\xn)$ of $\xn$ at a point $p_k\in S_k(\vr) \setminus S_{k-1}(\vr)$ for $k=1, \dots, r-1$ is a cone with vertex of dimension $k(2r+1-k)-1$ over $X_{2(r-k)}$. Moreover, $X_{2r}$ is smooth along $S_{r}(\vr) \setminus S_{r-2}(\vr)$, and the equations in Remark \ref{equations} define $X_{2r}$ scheme-theoretically.  

The tangent cone $TC_{p_k}(S_{h}(\vr))$ of $S_h(\vr)$ at a point $p_k\in S_k(\vr) \setminus S_{k-1}(\vr)$ for $k=1, \dots, r-1, k< h$ is a cone with vertex of dimension $k(2r+1-k)-1$ over $S_{h-k}(\mathcal{V}_2^{2(r-k)-1})$. Moreover, the equations in Remark \ref{equations} together with the equations cutting out $\sec_{h}(\vr)$ define $S_h(\vr)$ scheme-theoretically.  

In particular, $\xn$ is smooth along $S_{r}(\vr) \setminus S_{r-2}(\vr)$ and $S_{r}(\vr)$ is a divisor in $\xn$.
\end{Proposition}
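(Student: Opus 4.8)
The plan is to adapt the strategy of Proposition \ref{tcones}, computing each tangent cone from the explicit quadratic equations of Remark \ref{equations} (together with the $(h+1)\times(h+1)$ minors of $Z$ cutting out $\sec_h(\vr)$ in the second part), and then to read off the remaining assertions. Since by Proposition \ref{fun} the stratum $S_k(\vr)\setminus S_{k-1}(\vr)$ is the single $\spn$-orbit $O_k$, the action lets me reduce to the standard point $p_k=I_k$. I would then pass to the affine chart $z_{0,0}\neq 0$ and apply the coordinate change $z_{i,i}\mapsto z_{i,i}-1$ for $i=1,\dots,k-1$ used in Proposition \ref{tcones}, which translates $p_k$ to the origin.

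The core of the argument is extracting the lowest-degree terms of the equations after this substitution. I expect the chart coordinates to split into three groups: the $(r-k)(2r-2k+1)$ entries of the symmetric block indexed by the \emph{complementary} symplectic pairs $(k,r+k),\dots,(r-1,2r-1)$; a set of $k(2r-k)$ \emph{transverse} coordinates; and the remaining $k(2r+1-k)-1$ coordinates (a direct count shows these numbers add up to $r(2r+1)-1$). After plugging in the constants coming from $I_k$, the equations whose quadratic part meets the transverse coordinates linearly should contribute exactly $k(2r-k)$ independent linear initial forms, cutting out the linear span of the tangent cone, while the equations supported on the complementary block should retain their quadratic initial part and reproduce verbatim the equations of $X_{2(r-k)}$ for the smaller symplectic form. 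This shows $TC_{p_k}(\xn)$ is contained in the cone $C$ with vertex a coordinate subspace of dimension $k(2r+1-k)-1$ over $X_{2(r-k)}$. Since $\xn$ is irreducible, $\dim TC_{p_k}(\xn)=\dim\xn=r(r+1)$ by Corollary \ref{prop_dim}, and a direct computation gives $\dim C=(k(2r+1-k)-1)+1+(r-k)(r-k+1)=r(r+1)$; as $C$ is irreducible the inclusion is an equality. For the second part I would intersect with the minors, whose initial forms are the cone over $\sec_{h-k}(\mathcal{V}^{2r-1-k}_2)$ by Proposition \ref{tcones}; the combined initial ideal cuts out the cone over $X_{2(r-k)}\cap\sec_{h-k}(\mathcal{V}_2^{2(r-k)-1})=S_{h-k}(\mathcal{V}_2^{2(r-k)-1})$, and the dimension formula $\dim S_h(\vr)=2rh+h-h^2-1$ of Proposition \ref{fun} forces equality again.

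With the tangent cones in hand the remaining assertions follow. Smoothness of $\xn$ along $S_r(\vr)\setminus S_{r-2}(\vr)$ comes from specializing to $k=r-1$ and $k=r$: the base of the cone is then $X_2=\mathbb{P}^2$ or $X_0$, both linear, so $TC_{p_k}(\xn)$ is a linear space of dimension $r(r+1)$ and the point is smooth. The dimension formula gives $\dim S_r(\vr)=r(r+1)-1$, so $S_r(\vr)$ is a divisor. For the scheme-theoretic statements I would run an induction on $r$, with base cases $X_2=\mathbb{P}^2$ and $X_4\cong\mathbb{G}(1,4)$ from Proposition \ref{x4g14}: one checks that the initial ideal computed above is radical, so that the tangent cone of the scheme cut out by the equations (resp. the equations together with the minors) at a general point of each stratum is the reduced, equidimensional cone over $X_{2(r-k)}$ (resp. over $S_{h-k}(\mathcal{V}_2^{2(r-k)-1})$), reduced by the inductive hypothesis. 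Hence that scheme has no embedded or multiple components along any stratum, and being set-theoretically equal to $\xn$ (resp. $S_h(\vr)$) by Proposition \ref{fun}, it coincides with it as a scheme.

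The main obstacle will be the bookkeeping in the central paragraph: unlike the pure-minor situation of Proposition \ref{tcones}, the equations of Remark \ref{equations} involve the symplectic form $\Omega$, which couples the index $i$ with $i+r$, so the \emph{used} and \emph{complementary} pieces are not a simple top-left/bottom-right split, and one must track carefully which equations degenerate to linear initial forms and verify that the surviving quadratic forms are precisely those defining $X_{2(r-k)}$ in the complementary symplectic coordinates. Controlling the radicality of the initial ideal, needed for the scheme-theoretic conclusion, is the subtlest point, and it is exactly there that the inductive hypothesis on $X_{2(r-k)}$ is essential.
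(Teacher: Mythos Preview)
Your proposal is correct and follows essentially the same route as the paper: reduce to the standard representative $p_k=I_k$ via the $\spn$-action, pass to the affine chart $z_{0,0}=1$ with the shift $z_{i,i}\mapsto z_{i,i}-1$ for $i<k$, extract initial forms from the equations of Remark \ref{equations} (and the minors, for $S_h$), identify the result as the cone over $X_{2(r-k)}$ (resp.\ over $S_{h-k}(\mathcal{V}_2^{2(r-k)-1})$), and conclude by the dimension count together with an induction on $r$ (base $r=2$ from Proposition \ref{x4g14}) for the scheme-theoretic assertions.

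One remark: your bookkeeping is in fact more explicit than the paper's. The paper simply asserts that after the coordinate change the lowest-degree part of the system is obtained by deleting from $Z^t\Omega Z=\lambda\Omega$ the rows and columns indexed by $0,\dots,k-1,r,\dots,r+k-1$, and that this yields the cone with vertex of dimension $k(2r+1-k)-1$. Taken literally, that deletion only produces the quadratic equations of $X_{2(r-k)}$ in the complementary block; the $k(2r-k)$ independent linear initial forms you anticipate (coming from the equations $N_{i,j}=0$ with $i<k$, and from $N_{l,r+l}-N_{l+1,r+l+1}$ with $l<k$) are exactly what is needed to cut the linear span of the tangent cone down to the correct dimension before imposing the $X_{2(r-k)}$ equations. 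So your explicit three-way split of the chart coordinates, and the count $k(2r-k)+(k(2r+1-k)-1)+(r-k)(2r-2k+1)=r(2r+1)-1$, supply a detail the paper leaves implicit. Your handling of smoothness at rank $r$ points is also fine; the paper treats $p_r$ (and $p_{2r}$) by direct inspection rather than by formally invoking ``$X_0$'', but the content is the same.
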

\begin{proof}
Let $p_{k}=(p_{i,j})_{i,j=0, \dots, 2r-1, i \le j}$ be the point representing the standard matrix of rank $k$ with $p_{i,i}=1$ for $i=0,\dots,k-1$ and $p_{i,j}=0$ otherwise.   

We proceed by induction on $r$. The base case $r = 2$ is in Proposition \ref{x4g14}. We will use the equations in Remark \ref{equations} to compute $TC_{p_k}X_{2r}$. Consider the change of coordinates $z_{i,i}\mapsto z_{i,i}-z_{0,0}$ for $i = 1,\dots,k-1$, and set $z_{0,0} = 1$. Note that the lowest degree terms of the equations in Remark \ref{equations} after this change of coordinates are obtained by removing from $Z^t\Omega Z = \lambda\Omega$ the rows and columns indexed by $0,\dots,k-1,r,\dots,r+k-1$. Therefore, we get a cone with vertex of dimension $k(2r+1-k)-1$ over $X_{2(r-k)}$ which by induction hypothesis is irreducible and reduced since the equations in Remark \ref{equations} define $X_{2(r-k)}$ scheme-theoretically. Now, $k(2r+1-k)+\dim(X_{2(r-k)}) = \dim(X_{2r})$ yields that this is the tangent cone $TC_{p_k}X_{2r}$, and hence the equations in Remark \ref{equations} define $X_{2r}$ scheme-theoretically. Note that at the points representing $I_r$ and $I_{2r,2r}$ the equations in Remark \ref{equations} yield a linear subspace of the same dimension of $X_{2r}$.

Now, consider $S_h(\mathcal{V}_2^{2r-1})$. Note that $TC_{p_k}S_h(\mathcal{V}_2^{2r-1})$ is contained in $TC_{p_k}X_{2r}\cap TC_{p_k}\sec_h(\mathcal{V}_2^{2r-1})$. By the previous computation of $TC_{p_k}X_{2r}$ and the computation of $TC_{p_k}\sec_h(\mathcal{V}_2^{2r-1})$ in Proposition \ref{tcones}, we conclude that $TC_{p_k}X_{2r}\cap TC_{p_k}\sec_h(\mathcal{V}_2^{2r-1})$ is a cone with vertex of dimension $k(2r+1-k)-1$ over $S_h(\mathcal{V}_2^{2(r-k)-1}) = \sec_{h-k}(\mathcal{V}_2^{2(r-k)-1})\cap X_{2(r-k)}$. Again by induction this is an irreducible and reduced cone which  by the computation of the dimension of $S_h(\mathcal{V}_2^{2r-1})$ in Proposition \ref{fun} must coincide with $TC_{p_k}S_h$. Hence the equations in Remark \ref{equations} together with the equations cutting out $\sec_{h}(\vr)$ define $S_h(\vr)$ scheme-theoretically.  
\end{proof} 

Now, we are ready to prove the main result of this section. We will denote by $S_h^{(i)}(\vr)$ the strict transform of $S_h(\vr)$ in $X_{2r}^{(i)}$. 

\begin{thm}\label{main1}
For any $i = 1,\dots,r-1$ the strict transform $S_{i+1}^{(i)}(\mathcal{V}_2^{2r-1})$ of $S_{i+1}(\mathcal{V}_2^{2r-1})$ in $X_{2r}^{(i)}$ is smooth. Moreover, the variety $\xnb$ is smooth and the exceptional divisors $E_1, \dots, E_{r-1} \subset \xnb$ are smooth as well. 

The closures of the orbits of the $Sp(2r)$-action on $\xnb$ induced by the action in (\ref{acsimp}) are given by all the possible intersections among $E_1,\dots,E_{r-1},S_{r}^{(r-1)}(\mathcal{V}_2^{2r-1})$ and $X_{2r}^{(i)}$ itself.

In particular, the variety $\xnb$ with boundary divisors $E_1,\dots,E_{r-1},S_{r}^{(r-1)}(\mathcal{V}_2^{2r-1})$ is wonderful.
\end{thm}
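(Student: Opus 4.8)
The plan is to argue by induction on $r$, closely following the strategy used for complete quadrics in Proposition \ref{Qn_won}, but with two essential modifications: the tangent cone computations of Proposition \ref{tcones} are replaced by those of Proposition \ref{symplcones}, and the smooth ambient space $\mathbb{P}^N$ is replaced by the singular variety $\xn$. The base cases $r\le 2$ are immediate: for $r=1$ there are no blow-ups and $\mathcal{S}_2 = X_2\cong\mathbb{P}^2$ is wonderful of rank one with boundary the conic $S_1(\mathcal{V}^1_2)$, while for $r=2$ Proposition \ref{x4g14} identifies $X_4$ with the smooth Grassmannian $\mathbb{G}(1,4)$ and $\vr=\mathcal{V}_2^3$ with a smooth centre, so that $\mathcal{S}_4 = X_4^{(1)}$ is the blow-up of a smooth variety along a smooth subvariety, and the transversality of $E_1, S_2^{(1)}(\mathcal{V}_2^3)$ together with the orbit description follow from the local analysis of $S_2(\mathcal{V}_2^3)$ near $\mathcal{V}_2^3$ carried out there.

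The geometric heart of the inductive step is the fibre structure of the exceptional divisors, which I would extract from Proposition \ref{symplcones}. At a point $p_i\in S_i(\vr)\setminus S_{i-1}(\vr)$ the tangent cone $TC_{p_i}(\xn)$ is a cone over $X_{2(r-i)}$ whose vertex has dimension $i(2r+1-i)-1$; since this equals $\dim S_i(\vr) = 2ri+i-i^2-1$ by Proposition \ref{fun}, and $S_i(\vr)$ is smooth at $p_i$, the vertex is exactly the (embedded) tangent space $T_{p_i}S_i(\vr)$. Hence blowing up the smooth centre $S_i(\vr)$ replaces the cone point by the fibre $X_{2(r-i)}$, and Proposition \ref{symplcones} shows that the strict transforms $S_h^{(i)}(\vr)$ cut this fibre along $S_{h-i}(\mathcal{V}_2^{2(r-i)-1})\subset X_{2(r-i)}$. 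Finally the stabilizer of $p_i$ in $\spn$ induces on the fibre the standard $Sp(2(r-i))$-action, so that fibrewise the subsequent blow-ups along $S_{i+1}(\vr),\dots,S_{r-1}(\vr)$ restrict to the very blow-ups of $X_{2(r-i)}$ defining $\mathcal{S}_{2(r-i)}$.

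Granting this, I would run the induction on the blow-up stage as in Proposition \ref{Qn_won}. All centres are smooth: $S_1(\vr) = \vr$ is the smooth Veronese, and the smoothness of the later centres $S_i^{(i-1)}(\vr)$ is part of the inductive statement at the previous stage. To prove that $S_{i+1}^{(i)}(\vr)$ is smooth I would combine the fibration $S_{i+1}^{(i)}(\vr)\cap E_i\to S_i^{(i-1)}(\vr)$, whose fibres are the smooth varieties $\mathcal{V}_2^{2(r-i)-1}\subset X_{2(r-i)}$ by the previous paragraph and the induction on $r$, with Proposition \ref{smooth_fib}; a dimension count using Proposition \ref{fun} then forces $S_{i+1}^{(i)}(\vr)$ to be smooth and to meet $E_1,\dots,E_i$ transversally, exactly as in the quadric case. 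The smoothness of $\mathcal{S}_{2r}$ and of $E_1,\dots,E_{r-1}, S_r^{(r-1)}(\vr)$ follows on two overlapping charts: the complement of the exceptional divisors is isomorphic to $\xn\setminus S_{r-1}(\vr)$, which is smooth since $\Sing(\xn)\subseteq S_{r-2}(\vr)$ by Proposition \ref{symplcones}; while near $E_i$ the completed sequence of blow-ups makes $\mathcal{S}_{2r}$ fibre over the strata of $S_i(\vr)$ with fibre the smooth wonderful variety $\mathcal{S}_{2(r-i)}$ of the inductive hypothesis, so Proposition \ref{smooth_fib} applies once more. The intersections of several exceptional divisors over the deeper strata $S_{i-1}(\vr)\subset S_i(\vr)$ are controlled by the same multi-index transversality bookkeeping as in Proposition \ref{Qn_won}.

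For the orbit closures I would argue, again as in Proposition \ref{Qn_won}, that an orbit closure in $X_{2r}^{(i)}$ not contained in $E_i$ is the strict transform of one in $X_{2r}^{(i-1)}$, hence an intersection of $E_1,\dots,E_{i-1}$ with the $S_h^{(i)}(\vr)$, whereas one contained in $E_i$ is read off through the fibrewise $Sp(2(r-i))$-action from the orbit closures of $\mathcal{S}_{2(r-i)}$, which by induction are the intersections of its boundary divisors. Proposition \ref{sec} guarantees that $\xn$ contains no matrices of rank strictly between $r$ and $2r$, so the list of strata closes at $S_r(\vr)$ and the only boundary divisors of $\mathcal{S}_{2r}$ are $E_1,\dots,E_{r-1}, S_r^{(r-1)}(\vr)$; their intersections then exhaust the orbit closures, and $\mathcal{S}_{2r}$ is wonderful. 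The main obstacle, which has no counterpart in the complete quadrics setting, is precisely the singularity of $\xn$: the whole argument hinges on verifying that the vertex of $TC_{p_i}(\xn)$ is the tangent space to the blow-up centre, so that each blow-up replaces a cone point by a copy of $X_{2(r-i)}$ and resolves the singularities in lockstep with the induction on $r$.
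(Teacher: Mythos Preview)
Your proposal is correct and follows essentially the same inductive strategy as the paper, with the tangent cone computation of Proposition \ref{symplcones} feeding into fibration arguments via Proposition \ref{smooth_fib}. The one substantive difference is that the paper does \emph{not} work intrinsically with the singular variety $\xn$ and its blow-ups: instead it systematically embeds $X_{2r}^{(i)}$ in the smooth intermediate space $\mathcal{Q}(2r-1)_{i}$ of Construction \ref{ccq}, writes $S_i^{(i-1)}(\vr)=\sec_i^{(i-1)}(\vr)\cap X_{2r}^{(i-1)}$, and uses the ambient exceptional divisors $E_k^q$ (which are honest projective bundles) rather than the intrinsic $E_k$. This buys several simplifications: smoothness of $\xnb$ follows because each $E_i=E_i^q\cap\xnb$ is automatically Cartier and is shown smooth by the fibration argument, and the orbit closure description on $\xnb$ is read off directly from the known description on $\mathcal{Q}(2r-1)_{r-1}$ in Proposition \ref{Qn_won}, since the $\spn$-action is the restriction of the $SL(2r)$-action.

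Your intrinsic route is viable, but the step ``the vertex of $TC_{p_i}(\xn)$ is exactly $T_{p_i}S_i(\vr)$, hence blowing up the smooth centre replaces the cone point by $X_{2(r-i)}$'' needs a bit more than a dimension count: you must check that $T_{p_i}S_i(\vr)$ is actually contained in the vertex (equivalently, that the normal cone $C_{S_i/X_{2r}}$ at $p_i$ is the cone over $X_{2(r-i)}$). This is true and can be read off the explicit coordinates in the proof of Proposition \ref{symplcones}, but it is precisely the point where the paper's ambient embedding pays off by avoiding any discussion of normal cones in a singular variety. A minor organizational difference: the paper inducts on the blow-up stage $i$ simultaneously for all $r$ (so that smoothness of the fibre $S_{i-k}^{(i-k-1)}(\mathcal{V}_2^{2(r-k)-1})$ is available as part of the same inductive hypothesis), rather than running an outer induction on $r$.
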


\begin{proof} 
For every $r$ in $X_{2r}^{(0)}$ we have $S_1^{(0)}(\vr)=\vr$ which is smooth. We will assume that $S_j^{(j-1)}(\vr)$ is smooth for every $r$ and for every $j <i$ and prove that also $S_i^{(i-1)}(\vr)$ in $X_{2r}^{(i-1)}$ is smooth. We have $S_{i}^{(i-1)}(\vr) = \sec_{i}^{(i-1)}(\mathcal{V}_2^{2r-1}) \cap X_{2r}^{(i-1)}$, so we consider $S_{i}^{(i-1)}(\vr)$ inside $\mathcal{Q}(2r-1)_{(i-1)}$. By Proposition \ref{Qn_won}, for every $r$ and for every $i=0, \dots, 2r-1$ the varieties $\mathcal{Q}(2r-1)_{(i-1)}$, $\sec_{i}^{(i-1)}(\mathcal{V}_2^{2r-1})$, $E_1^q, \dots,E_{i-1}^q$ are smooth. 

Now, $S_{i}^{(i-1)}(\vr)$ is smooth away from $E_1^{q}, \dots,E_{i-1}^{q}$. Moreover, by Proposition \ref{symplcones} for every $k=1, \dots, i-1$,  $S_{i}^{(i-1)}(\vr) \cap E_k^{q} \rightarrow S_{k}^{(k-1)}(\vr)$ is a fibration with fibers isomorphic to $S_{i-k}^{(i-k-1)}(\mathcal{V}_2^{2(r-k)-1})$ which is smooth by induction. Proposition \ref{smooth_fib} yields that $S_{i}^{(i-1)}(\vr) \cap E_k^{q}$ is smooth for $k=1, \dots, i-1$. Now, since by Proposition \ref{fun} we have
$$\dim S_k^{(k-1)}(\mathcal{V}^{2r-1}_2)+\dim S_{i-k}^{(i-k-1)}(\mathcal{V}^{2(r-k)-1}_2) = 2ri+i-i^2-2 = \dim S_i^{(i-1)}(\mathcal{V}^{2r-1}_2)-1$$   
we get that $S_{i}^{(i-1)}(\vr)$ is smooth as well.

By Proposition \ref{symplcones} for every $r$, in $X_{2r}^{(1)}$ we have that $E_1 \cap S_2^{(1)}(\vr) \rightarrow \vr$ is a fibration with fibers isomorphic to $\mathcal{V}_2^{2(r-1)-1}$ and then by Proposition \ref{smooth_fib} $E_1 \cap S_2^{(1)}(\vr)$ is smooth of dimension $4r-4 = \dim(S_2^{(1)}(\vr))-1$. More generally, consider intersections of the form  $S_{i+1}^{(i)}(\vr)\cap E_{j_1}\cap\dots \cap E_{j_t}$, for $1 \le j_1 < \dots < j_t \le i$. By Proposition \ref{symplcones}, the restriction of the blow-down morphism
$$S_{i+1}^{(i)}(\vr) \cap E_{j_1} \cap \dots \cap E_{j_t}\rightarrow E_{j_1}\cap \dots \cap E_{j_{t-1}}\cap S_{j_t}^{(j_t-1)}(\vr)$$
has fibers isomorphic to $S_{i+1-j_t}^{(i-j_t)}(\mathcal{V}_2^{2(r-j_t)-1})$. Now, by Proposition \ref{smooth_fib} $S_{i+1}^{(i)}(\vr)\cap E_{j_1}\cap\dots \cap E_{j_t}$ is smooth since we proved before that $S_{i+1-j_t}^{(i-j_t)}(\mathcal{V}_2^{2(r-j_t)-1})$ is smooth and $E_{j_1}\cap \dots \cap E_{j_{t-1}}\cap S_{j_t}^{(j_t-1)}(\vr)$ is smooth by induction. Moreover,
$$
\dim(S_{i+1}^{(i)}(\vr) \cap E_{j_1} \cap \dots \cap E_{j_t}) = \dim(E_{j_1}\cap \dots \cap E_{j_{t-1}}\cap S_{j_t}^{(j_t-1)}(\vr))+ \dim(S_{i+1-j_t}^{(i-j_t)}(\mathcal{V}_2^{2(r-j_t)-1}))
$$
and by induction $\dim(E_{j_1}\cap \dots \cap E_{j_{t-1}}\cap S_{j_t}^{(j_t-1)}(\vr)) = 2rj_t+j_t-j_t^2-1-(t-1)$.
This yields, using Proposition \ref{fun}, that
\stepcounter{thm}
\begin{equation}\label{eq_dim}
\begin{array}{l}
\dim(S_{i+1}^{(i)}(\vr) \cap E_{j_1} \cap \dots \cap E_{j_t}) = 2r(i+1)+(i+1)-(i+1)^2-1-t = \dim(S_{i+1}^{(i)}(\vr)) - t.
\end{array} 
\end{equation}
Now, consider the variety $\xnb$ as a subvariety of the variety $\mathcal{Q}(2r-1)_{r-1}$ in Construction \ref{ccq}. By Proposition \ref{symplcones} $\xnb$ is smooth away from the exceptional divisors. Furthermore, the exceptional divisor $E_i^{q}$ in Construction \ref{ccq} intersects $\xnb$ in the exceptional divisor $E_i$ in Construction \ref{ccssf}. By Proposition \ref{symplcones} $E_i\rightarrow S_{i}^{(i-1)}(\mathcal{V}_2^{2r-1})$ is a fibration with $\mathcal{S}_{2(r-i)}$ as fiber. Hence $E_i^q\cap \xnb$ is a smooth divisor in $\xnb$ and therefore $\xnb$ is smooth. 

Now, consider an intersection of the form $E_{j_1}\cap\dots \cap E_{j_t}$ and the fibration
$$E_{j_1}\cap\dots \cap E_{j_t}\rightarrow E_{j_1}\cap\dots E_{j_{t-1}}\cap S_{j_t}^{(j_t-1)}(\vr).$$
By Proposition \ref{symplcones} this fibration has fibers isomorphic to $\mathcal{S}_{2(r-j_t)}$. By the previous part of the proof we have that
$$\dim(E_{j_1}\cap\dots E_{j_{t-1}}\cap S_{j_t}^{(j_t-1)}(\vr))+\dim(\mathcal{S}_{2(r-j_t)}) = r^2+r-t = \dim(\xnb)-t$$
and hence the intersection $E_{j_1}\cap\dots \cap E_{j_t}$  is transversal. Note also that considering the fibration
$$S_{r}^{(r-1)}(\vr) \cap E_{j_1} \cap \dots \cap E_{j_t}\rightarrow E_{j_1}\cap \dots \cap E_{j_{t-1}}\cap S_{j_t}^{(j_t-1)}(\vr)$$
and (\ref{eq_dim}) we get that the intersection $S_{r}^{(r-1)}(\vr) \cap E_{j_1} \cap \dots \cap E_{j_t}$ is transversal as well. 

Finally, for the claim about the orbit closures it is enough to recall that the $\spn$-action on $\xnb$ is the restriction of the $SL(2r)$-action on $\mathcal{Q}(2r-1)_{r-1}$ in (\ref{acss}) and to use the statement about the orbit closures in Proposition \ref{Qn_won}.  
\end{proof}

\begin{Proposition}\label{mult}
We have that
$$ \mult_{S_r(\vr)} S_{2r-1}(\vr) = r.$$
Moreover, if $H_{\xn}$ is the hyperplane section of $\xn$, we have that $S_r(\vr) \sim 2H_{\xn}$.
\end{Proposition}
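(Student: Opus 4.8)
The plan is to reduce both assertions to a single polynomial identity on $\xn$, namely $\det Z = \pm\lambda^r$, where $\lambda := N_{0,r}$ is the quadratic form appearing in Remark \ref{equations}. First I would record that, by the defining equations of Remark \ref{equations} together with the skew-symmetry of $N = Z\Omega Z$, the matrix of functions $N$ restricted to $\xn$ equals $\lambda\,\Omega$ identically: every entry $N_{i,j}$ with $j\neq r+i$ is forced to vanish, while the diagonal band $N_{i,r+i}$ consists of equal entries, all equal to $\lambda$. Taking determinants of $Z\Omega Z = \lambda\Omega$ and using $\det\Omega = 1$ gives $\det(Z)^2 = \lambda^{2r}$ on $\xn$; since $\xn$ is irreducible and reduced, the coordinate ring of its affine cone is a domain, so factoring $(\det Z-\lambda^r)(\det Z+\lambda^r)=0$ yields $\det Z = \pm\lambda^r$ on $\xn$, with one global sign.

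Next I would identify the two divisors in play. The secant variety $\sec_{2r-1}(\vr)$ is the determinant hypersurface $\{\det Z=0\}$ of degree $2r$, so $S_{2r-1}(\vr)=\xn\cap\{\det Z=0\}$ is the zero divisor of $\det Z|_{\xn}$; by Proposition \ref{sec} it is set-theoretically the irreducible divisor $S_r(\vr)$, hence equals $m\cdot S_r(\vr)$ with $m=\mult_{S_r(\vr)}S_{2r-1}(\vr)$. Likewise $\{\lambda|_{\xn}=0\}=S_r(\vr)$ set-theoretically, because on $\xn$ the vanishing of $\lambda$ is equivalent to $N=0$, i.e. to case (ii) in the proof of Proposition \ref{fun}, which cuts out exactly the rank $\le r$ locus.

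To compute $m$ I would evaluate the order of vanishing of $\det Z|_{\xn}$ along $S_r(\vr)$ on an explicit transverse curve through the generic point $I_r$ of $S_r(\vr)$. By Proposition \ref{symplcones}, $\xn$ is smooth along $S_r(\vr)\setminus S_{r-2}(\vr)$, so $I_r$ is a smooth point and $S_r(\vr)$ a smooth (Cartier) divisor near it. I would take the family $\Psi_t = \left(\begin{smallmatrix} I_{r,r} & 0 \\ 0 & tI_{r,r}\end{smallmatrix}\right)$ from the proof of Proposition \ref{fun}, which lies in $\xn$, satisfies $\lim_{t\to 0}\Psi_t=I_r$, and has $\det\Psi_t=t^r$. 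Its velocity at $t=0$ has nonzero lower-right block, whereas the Lie-algebra description in Remark \ref{tan_symp} gives $T_{I_r}S_r(\vr)=T_{I_r}O_r=\{\left(\begin{smallmatrix} A+A^t & C\\ C & 0\end{smallmatrix}\right)\}$, every vector of which has lower-right block zero. Hence the curve is transverse to $S_r(\vr)$, its intersection multiplicity there is $1$, and pulling back $m\cdot S_r(\vr)$ along it gives $m=\operatorname{ord}_{t=0}\det\Psi_t=r$, proving the first assertion.

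Finally, for $S_r(\vr)\sim 2H_{\xn}$ I would combine the two computations. Writing $\operatorname{div}_0(\lambda|_{\xn})=m'\cdot S_r(\vr)$ (a single reduced component, as above), the identity $\det Z=\pm\lambda^r$ gives $m\cdot S_r(\vr)=\operatorname{div}_0(\det Z|_{\xn})=r\cdot\operatorname{div}_0(\lambda|_{\xn})=rm'\cdot S_r(\vr)$, so $r=rm'$ forces $m'=1$. Thus $\lambda$, a quadratic form, cuts out $S_r(\vr)$ as a reduced divisor and $S_r(\vr)=\operatorname{div}_0(\lambda|_{\xn})\sim 2H_{\xn}$. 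The main obstacle is the bookkeeping around orders of vanishing: one must check that $\xn$ is smooth at the chosen point, that $S_r(\vr)$ is reduced and irreducible there, and that the test curve $\Psi_t$ is genuinely transverse, so that the equality $\mult_{S_r(\vr)}S_{2r-1}(\vr)=\operatorname{ord}_{t=0}\det\Psi_t$ is legitimate rather than off by the intersection multiplicity.
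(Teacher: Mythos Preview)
Your proof is correct and takes a genuinely different route from the paper's. The paper works locally: it passes to affine coordinates centred at $I_r$, reads off the linear equations for $T_{I_r}\xn$ from Remark~\ref{equations}, and then observes that the determinant of the bottom-right $r\times r$ block of $Z$ (which cuts out the tangent cone of $\sec_{2r-1}(\vr)$) restricts on that tangent space to $z_{2r-1,2r-1}^r$, giving multiplicity $r$. The second assertion then follows by dividing the class $2rH_{\xn}$ of $S_{2r-1}(\vr)$ by $r$.

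Your argument is more global and more structural. By observing that the Remark~\ref{equations} relations force $Z\Omega Z=\lambda\Omega$ on $\xn$, you obtain the polynomial identity $\det Z=\pm\lambda^r$ in the homogeneous coordinate ring, which packages \emph{both} assertions at once: the multiplicity comes out of a transverse test curve, and the quadratic $\lambda$ is an \emph{explicit} section cutting out $S_r(\vr)$ with multiplicity one. (Indeed, one can confirm $m'=1$ directly by checking $\lambda(\Psi_t)=t$.) This has the pleasant byproduct of exhibiting an invariant quadric through $S_r(\vr)$, rather than merely its divisor class.

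Two small points to tighten. First, the reason $T_{I_r}S_r(\vr)=T_{I_r}O_r$ is not that $\xn$ is smooth there but that $I_r$ lies in the open orbit $O_r$ of $S_r(\vr)=\overline{O_r}$; you use this implicitly, so say it. Second, the tangent-space description $\{\left(\begin{smallmatrix}A+A^t & C\\ C & 0\end{smallmatrix}\right)\}$ is the \emph{affine} tangent space to the cone over $O_r$; it already contains the radial direction $I_r$, so the projective transversality check against $\frac{d}{dt}\Psi_t|_{t=0}=\left(\begin{smallmatrix}0 & 0\\ 0 & I\end{smallmatrix}\right)$ is indeed legitimate. With these clarifications your argument stands.
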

\begin{proof}
We will compute the tangent cone of $S_r(\vr)$ at the point $p_r = (p_{i,j})_{i,j=0,\dots,2r-1}$, where $p_{i,i}=1$ for $i=0, \dots, r-1$ and $p_{i,j}=0$ otherwise.

Consider the change of coordinates $z_{i,i}\mapsto z_{i,i}-z_{0,0}$ ans set $z_{0,0} = 1$. By Remark \ref{equations} the tangent space of $X_{2r}$ at $p_r$ is cut out by a set of linear equations and among these equations we have $\{z_{i,j} = 0\}$ for $i,j = r,\dots, 2r-2$, and $z_{i,i} = z_{i+1,i+1}$ for $i = r,\dots, 2r-2$.

Now, the tangent cone of $\sec_{2r-1}(\mathcal{V}_{2}^{2r-1})$ is cut out by the determinant of the bottom right $r\times r$ submatrix of the matrix $Z$ in (\ref{matrix}). Note that substituting the relations on the $z_{i,j}$ above in this determinant we get $z_{2r-1,2r-1}^r$. 

By Proposition \ref{sec} $\sec_{2r-1}(\mathcal{V}_{2}^{2r-1})$ and $\sec_{r}(\mathcal{V}_{2}^{2r-1})$ cut out on $X_{2r}$ the same divisor set-theoretically. The previous computation yields that $\sec_{2r-1}(\mathcal{V}_{2}^{2r-1})$ cuts out $\sec_{r}(\mathcal{V}_{2}^{2r-1})\cap X_{2r}$ on $X_{2r}$ with multiplicity $r$.

Now, recall that by Remark \ref{sec_ver} $\deg(\sec_{2r-1}(\mathcal{V}_2^{2r-1}))=2r$. Let $D$ be the divisor $\sec_r(\mathcal{V}_2^{2r-1})\cap X_{2r}$. Finally $\sec_{2r-1}(\mathcal{V}_2^{2r-1})\cap X_{2r}\sim 2rH_{\xn}$ yields $D\sim 2H_{\xn}$. 
\end{proof}

\begin{Remark}
In the case $r = 2$ we worked out explicitly the quadratic polynomial cutting out $S_2(\mathcal{V}_2^3)$ in $X_{4}$ and we got that $S_2(\mathcal{V}_2^3) = X_{4}\cap \{z_{0,3}z_{1,2}+z_{1,3}^2-z_{0,1}z_{2,3}-z_{1,1}z_{3,3} = 0\}$.
\end{Remark}

\section{Divisors on $\mathcal{S}_{2r}$}\label{divS2r}
Let $X$ be a normal projective $\mathbb{Q}$-factorial variety over an algebraically closed field of characteristic zero. We denote by $N^1(X)$ the real vector space of $\mathbb{R}$-Cartier divisors modulo numerical equivalence. 
The \emph{nef cone} of $X$ is the closed convex cone $\Nef(X)\subset N^1(X)$ generated by classes of nef divisors. 

The stable base locus $\textbf{B}(D)$ of a $\mathbb{Q}$-divisor $D$ is the set-theoretic intersection of the base loci of the complete linear systems $|sD|$ for all positive integers $s$ such that $sD$ is integral
\stepcounter{thm}
\begin{equation}\label{sbl}
\textbf{B}(D) = \bigcap_{s > 0}B(sD).
\end{equation}
The \emph{movable cone} of $X$ is the convex cone $\Mov(X)\subset N^1(X)$ generated by classes of 
\emph{movable divisors}. These are Cartier divisors whose stable base locus has codimension at least two in $X$.
The \emph{effective cone} of $X$ is the convex cone $\Eff(X)\subset N^1(X)$ generated by classes of 
\emph{effective divisors}. We have inclusions $\Nef(X)\ \subset \ \overline{\Mov(X)}\ \subset \ \overline{\Eff(X)}$. We refer to \cite[Chapter 1]{De01} for a comprehensive treatment of these topics. 

In this section we will study the Picard rank and the cones of effective and nef divisors of the wonderful compactification $\mathcal{S}_{2r}$. We will need the following result.

\begin{Lemma}\label{lso}
Let $SO(2r)$ be the special orthogonal group. Then  
$$SO(2r) \cap Sp(2r) \cong GL(r).$$
In particular, $SO(2)  \cong GL(1) \cong K^{*}$.
\end{Lemma}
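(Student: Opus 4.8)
The plan is to realize the intersection as the image of $GL(r)$ under the standard ``complexification'' embedding, using the symplectic form $\Omega$ as a complex structure. First I would reduce the two defining conditions to a more usable form. Every $M\in SO(2r)$ satisfies $M^tM=I$, so $M^t=M^{-1}$; substituting this into $M^t\Omega M=\Omega$ turns it into $M^{-1}\Omega M=\Omega$, i.e. $\Omega M=M\Omega$. Thus $M\in SO(2r)\cap Sp(2r)$ precisely when $M^tM=I$ and $M$ commutes with $\Omega$. (Here I read $SO(2r)$ as the orthogonal group of the standard form $x^tx$, equivalently the stabilizer of the identity matrix under the action $Z\mapsto MZM^t$, so that $\Omega^2=-I$ genuinely defines a complex structure.)

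Writing $M$ in $r\times r$ blocks, the commutation $M\Omega=\Omega M$ forces $M=\begin{pmatrix} A & B\\ -B & A\end{pmatrix}$. Fixing $i\in K$ with $i^2=-1$ (available since $K$ is algebraically closed of characteristic zero), I would consider the map $\Phi\colon M\mapsto A+iB\in M_r(K)$; a direct block multiplication shows that $\Phi$ is an algebra homomorphism on the space of matrices commuting with $\Omega$. The content of the orthogonality condition is then extracted by expanding $M^tM=I$ into $A^tA+B^tB=I$ and $A^tB=B^tA$, and rewriting these two identities as the single relation $U^tV=I$, where $U=A+iB$ and $V=A-iB$. Hence $V=(U^t)^{-1}$ is determined by $U$, the matrix $U$ may be an arbitrary element of $GL(r)$, and conversely any $U\in GL(r)$ reconstructs $A=\tfrac12(U+V)$, $B=\tfrac1{2i}(U-V)$ and a valid $M$. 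Therefore $\Phi$ restricts to a group isomorphism $SO(2r)\cap Sp(2r)\xrightarrow{\sim} GL(r)$. Conceptually $\Phi$ records the action of $M$ on the $i$-eigenspace $V_+$ of $\Omega$, and $U^tV=I$ is exactly the statement that $M$ preserves the nondegenerate pairing that $x^tx$ induces between $V_+$ and the $(-i)$-eigenspace $V_-$, both of which are isotropic of dimension $r$.

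Two small points complete the argument: one must check $\det M=1$, so that the intersection really lands in $SO$ and not merely in $O$, which is automatic since $\det M=\det U\cdot\det V=\det U\cdot\det(U^t)^{-1}=1$; and the case $r=1$ then yields $SO(2)\cong GL(1)\cong K^{*}$ at once. I do not expect a serious obstacle, as the statement is pure linear algebra; the only delicate point is the sign bookkeeping when translating $M^tM=I$ into $U^tV=I$ (in particular remembering that $i^2=-1$ when forming the mixed products $A^tB$, $B^tA$), together with being careful that the symmetric form defining $SO(2r)$ is the standard one, so that $\Omega$ indeed squares to $-I$.
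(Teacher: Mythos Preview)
Your argument is correct. The key reductions---$M^tM=I$ together with $M^t\Omega M=\Omega$ forcing $M\Omega=\Omega M$, hence the block shape $\begin{pmatrix}A&B\\-B&A\end{pmatrix}$---are right, and the bijection with $GL(r)$ via $U=A+iB$ works once one checks (as you indicate) that from any $U\in GL(r)$ the reconstructed $M$ really satisfies $M^tM=I$; this is a short computation using $U^tV=I$ and $V^tU=(U^tV)^t=I$. The determinant check $\det M=\det U\cdot\det V=1$ is also correct, since $M$ preserves the eigenspace decomposition of $\Omega$.

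The paper reaches the same conclusion by a different, more hands-on route: it first conjugates the standard symmetric form $I_{2r,2r}$ to the hyperbolic form $J=\begin{pmatrix}0&I\\I&0\end{pmatrix}$ via an explicit matrix $N$ (chosen so that $N^t\Omega N$ is a scalar multiple of $\Omega$, hence $Sp(2r)$ is preserved), and then computes $SO_J(2r)\cap Sp(2r)$ directly from the block equations, obtaining the subgroup $\left\{\begin{pmatrix}A&0\\0&A^{-t}\end{pmatrix}:A\in GL(r)\right\}$. Your approach and the paper's are really two sides of the same coin---both exploit $\Omega^2=-I$ to split $K^{2r}$ into two isotropic $r$-dimensional pieces---but the paper packages the splitting as an explicit change of basis, while you treat $\Omega$ as a complex structure and work with the projection to the $i$-eigenspace. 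The paper's version has the advantage that the resulting description $\begin{pmatrix}A&0\\0&A^{-t}\end{pmatrix}$ is used verbatim in the subsequent proof of Proposition~\ref{pic_G/H} (computing commutators in $H_J$); your version gives a cleaner conceptual picture and avoids introducing the auxiliary form $J$.
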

\begin{proof}
Consider the bilinear symmetric form given by the matrix 
$J=\begin{pmatrix}
0_{r,r} & I_{r,r}\\
I_{r,r} & 0_{r,r}
\end{pmatrix}$.
Set $N= \begin{pmatrix}
I_{r,r} & \xi I_{r,r}\\
\frac{1}{2} I_{r,r} & -\frac{\xi}{2} I_{r,r}
\end{pmatrix}$, with $\xi^2 = -1$. Note that $N^t J N = I_{2r,2r}$ and $N^t \Omega N = -\xi\Omega$. Therefore, we may prove the statement for the intersection $SO_J(2r) \cap Sp(2r)$, where $SO_J(2r)$ is the group of determinant one matrices which are orthogonal with respect to $J$. 

Let $M = \begin{pmatrix}
A & B\\ 
C & D
\end{pmatrix}\in \gln$ be a general $2r\times 2r$ invertible matrix, where $A,B,C,D$ are $r \times r$ matrices. Now, $M \in SO_J(2r) \cap \spn$ if and only if
$$\begin{cases}
A^t C=0_{r,r};\\
A^t D=I_{r,r};\\
B^t C=0_{r,r};\\
B^t D=0_{r,r};\end{cases} \rm{that\: is\quad} \begin{cases} D=A^{-t};\\
C=0_{r,r};\\
B=0_{r,r};\end{cases}$$
and hence 
$$\son \cap \spn \cong  SO_J(2r) \cap \spn =\Bigr\{\begin{pmatrix}
A & 0_{r,r}\\
0_{r,r} & A^{-t}
\end{pmatrix} \text{ for } A \in GL(r) \Bigl\} \cong GL(r).$$
For the last claim in the case $r = 1$ it is enough to note that $SO(2) \cap Sp(2) = \sod$. In fact every $2 \times 2$ matrix with determinant one is symplectic.
\end{proof}

\begin{Proposition}\label{pic_G/H}
Let $O_{2r}\subset \xn$ be the orbit of the identity. Then $\Pic(O_{2r}) \cong \mathbb{Z}/2\mathbb{Z}$.
\end{Proposition}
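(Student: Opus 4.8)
The plan is to realize $O_{2r}$ as a homogeneous space and reduce the Picard group computation to a character computation on the stabilizer. Since $O_{2r}$ is an $\spn$-orbit over an algebraically closed field of characteristic zero, we have $O_{2r}\cong \spn/H$, where $H\subset\spn$ is the stabilizer of $[I_{2r,2r}]\in\mathbb{P}^N$, that is
$$H=\{M\in\spn \ :\ MM^t=\lambda I_{2r,2r}\text{ for some }\lambda\in K^{*}\}.$$
Because $\spn$ is semisimple and simply connected we have $\mathfrak{X}(\spn)=0$ and $\Pic(\spn)=0$, where $\mathfrak{X}(-)$ denotes the character group; hence the standard exact sequence $\mathfrak{X}(\spn)\to\mathfrak{X}(H)\to\Pic(\spn/H)\to\Pic(\spn)$ collapses to an isomorphism $\Pic(O_{2r})\cong\mathfrak{X}(H)$. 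Thus it suffices to prove $\mathfrak{X}(H)\cong\mathbb{Z}/2\mathbb{Z}$.

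Next I would describe $H$ through the homomorphism $\chi:H\to K^{*}$, $M\mapsto\lambda$. Taking determinants in $MM^t=\lambda I_{2r,2r}$ and using $\det(M)=1$ gives $\lambda^{2r}=1$; moreover, rewriting the symplectic relation together with $M^t=\lambda M^{-1}$ yields $\Omega M\Omega^{-1}=\lambda^{-1}M$, and conjugating twice (using $\Omega^2=-I_{2r,2r}$) forces $\lambda^2=1$, so $\chi$ lands in $\{\pm1\}$. The value $\lambda=-1$ is attained by $M_0=\begin{pmatrix} 0 & \xi I_{r,r}\\ \xi I_{r,r} & 0\end{pmatrix}$ with $\xi^2=-1$, which is symplectic and satisfies $M_0M_0^t=-I_{2r,2r}$, so $\chi$ is surjective onto $\mu_2$. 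Its kernel is $\{M\in\spn : MM^t=I_{2r,2r}\}=\son\cap\spn\cong\glnm$ by Lemma \ref{lso} (symplectic matrices have determinant one, so the orthogonality condition automatically lies in $\son$). As $\glnm$ is connected this identifies $H^0=\glnm$ and $\pi_0(H)=\mu_2$, giving a short exact sequence $1\to\glnm\to H\xrightarrow{\chi}\mu_2\to1$.

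Finally I would compute $\mathfrak{X}(H)$ from this extension. Any character $\psi\in\mathfrak{X}(H)$ restricts on $H^0=\glnm$ to some power $\det^{n}$, and since the target is abelian $\psi$ is conjugation-invariant, so $\det^{n}$ must be invariant under conjugation by $M_0$ on $\glnm$. The element $M_0$ anticommutes with $\Omega$, hence swaps the two $r$-dimensional eigenspaces $V_{\pm}$ of $\Omega$; these are isotropic for the symmetric form $I_{2r,2r}$ and are put into perfect duality by it, so an element of $\son\cap\spn$ acting as $g_+$ on $V_+$ acts on $V_-$ as the contragredient $(g_+^{-1})^t$. Consequently conjugation by $M_0$ sends $\det\mapsto\det^{-1}$ on $\glnm$, forcing $2n=0$ and hence $n=0$: no nontrivial power of $\det$ extends to $H$. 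Therefore every character of $H$ factors through $\chi$, giving $\mathfrak{X}(H)=\mathfrak{X}(\mu_2)\cong\mathbb{Z}/2\mathbb{Z}$ and $\Pic(O_{2r})\cong\mathbb{Z}/2\mathbb{Z}$. The main obstacle is exactly this last step — pinning down the conjugation action of the nontrivial component on $\mathfrak{X}(\glnm)=\mathbb{Z}$ and showing it is by negation, so that the invariants vanish and $\mathfrak{X}(H)$ does not grow beyond $\mathbb{Z}/2\mathbb{Z}$; by contrast the reduction $\Pic(\spn/H)\cong\mathfrak{X}(H)$ is routine once the simple connectedness of $\spn$ is invoked.
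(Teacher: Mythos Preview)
Your proof is correct and follows the same overall architecture as the paper: reduce $\Pic(O_{2r})$ to the character group $\mathfrak{X}(H)$ using that $\spn$ is semisimple and simply connected, then analyze $H$ via the extension $1\to\glnm\to H\xrightarrow{\chi}\mu_2\to 1$ (with the same element $M_0$ showing surjectivity and with Lemma \ref{lso} identifying the kernel).

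The genuine difference lies in the final step. The paper computes the commutator subgroup $[H,H]$ explicitly: after passing to the $J$-form it writes down $[\tilde S,h]=\diag(A^{-t}A^{-1},\,AA^t)$ and then invokes a theorem of Bosch \cite{Bos86} that every matrix in $\glnm$ is a product of symmetric matrices to conclude $[H,H]=\overline H\cong\glnm$, whence $\mathfrak{X}(H)=\mathfrak{X}(H/[H,H])=\mathfrak{X}(\mu_2)$. You instead argue directly with characters: the restriction map $\mathfrak{X}(H)\to\mathfrak{X}(\glnm)$ lands in the $\pi_0(H)$-invariants, and you show conjugation by $M_0$ acts on $\mathfrak{X}(\glnm)=\mathbb{Z}\cdot\det$ by $\det\mapsto\det^{-1}$ (via the eigenspace swap $V_+\leftrightarrow V_-$ and the contragredient description of the $\glnm$-action), so the invariants are trivial and every character of $H$ factors through $\chi$. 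Your route is more conceptual and self-contained (no need for the Bosch citation); the paper's route is more hands-on and makes the abelianization of $H$ completely explicit. Both yield the same conclusion.
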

\begin{proof}
The group $G =\spn$ is semi-simple and simply connected. If $H\subset G$ is the stabilizer of the identity then  \cite[Theorem 4.5.1.2]{ADHL15} yields that $\Pic(G/H) \cong \mathbb{X}(H)$, where $\mathbb{X}(H)$ is the group of characters of $H$. We have that
$$H=\{M \in \spn , MM^t = \lambda_M I_{2r,2r} , \text{for some } \lambda_M \in K^{*}\}.$$
Then, for a general element $M \in H$ we have
$$\begin{cases}
MM^t= \lambda_M I_{2r,2r};\\
M^t \Omega M = \Omega;
\end{cases} \Rightarrow \lambda_M M^{-1} \Omega M = \Omega \Rightarrow \lambda_M \Omega M = M \Omega.$$
Let $v$ be an eigenvector of $\Omega$ with eigenvalue $\mu$. Then
$$\lambda_M \Omega M v = M \Omega v = M \mu v = \mu M v.$$
Setting $y= M v$ we have $\Omega y = (\lambda_M^{-1} \mu) y$ and so $y$ is an eigenvector of $\Omega$ with eigenvalue $\lambda_M^{-1} \mu$. The characteristic polynomial of $\Omega$ is $P_\Omega(\lambda)=(\lambda-\xi)^r(\lambda+\xi)^r$ where $\xi^2 = -1$. Therefore the only eigenvalues of $\Omega$ are $\xi$ and $-\xi$. So
$$\begin{cases}
\mu = \pm \xi;\\
\lambda_M^{-1} \mu = \pm \xi;
\end{cases} \Rightarrow \lambda_M^{-1}= \pm 1 \Rightarrow \lambda_M= \pm 1 
$$
and there is a morphism of groups
\begin{align*}
\varphi: H &\longrightarrow \mathbb{Z}/2\mathbb{Z}\\
M &\longmapsto \lambda_M
\end{align*}
The morphism $\varphi$ is surjective. Indeed we have $\varphi(I_{2r,2r})=1$, and if $S = \begin{pmatrix}
0_{r,r} & \xi I_{r,r} \\
\xi I_{r,r} & 0_{r,r}
\end{pmatrix}$ then $S^t \Omega S = \Omega$, $SS^t= - I_{2r,2r}$, $S \in H$ and $\varphi(S)=-1$. This yields an exact sequence
\stepcounter{thm}
\begin{equation}\label{exseq}
1 \rightarrow \overline{H} \rightarrow H \rightarrow \mathbb{Z}/2\mathbb{Z} \rightarrow 1
\end{equation}
where $\overline{H}= \{M \in \spn , MM^t = I_{2r,2r}\}$, and we can write $H= \overline{H} \cup S\overline{H}$. 

As in Lemma \ref{lso}, we consider the bilinear form $J=\begin{pmatrix}
0_{r,r} & I_{r,r}\\
I_{r,r} & 0_{r,r}
\end{pmatrix}$, which is congruent to the bilinear form $I_{2r,2r}$ via the matrix $N= \begin{pmatrix}
I_{r,r} & \xi I_{r,r}\\
\frac{1}{2} I_{r,r} & -\frac{\xi}{2} I_{r,r}
\end{pmatrix}$, where $\xi^2=-1$. Set $\overline{H}_J= \{M \in \spn , M J M^t = J\}$ and $H_J=\{M \in \spn , M J M^t = \lambda_M J , \text{for some } \lambda_M \in K^{*}\}$. There is an isomorphism  
\begin{align*}
\alpha: H &\longrightarrow H_J \\
M &\longmapsto N M N^{-1}
\end{align*}
such that $\alpha(\overline{H}) = \overline{H}_J$, $\tilde{S}:=\alpha(S)= \begin{pmatrix}
0 & -2 I_{r,r} \\
\frac{1}{2} I_{r,r} & 0
\end{pmatrix}$ and $H_J = \overline{H}_J \cup \tilde{S} \overline{H}_J$.
Take $B\in H_J$ and consider $\alpha^{-1}(B)\in H$. By the first part of the proof there is a morphism of groups $H_J\rightarrow\mathbb{Z}/2\mathbb{Z}$ mapping $B$ to $\lambda_{\alpha^{-1}(B)}$, and fitting in the following exact sequence 
$$
1 \rightarrow \overline{H}_J \rightarrow H_J \rightarrow \mathbb{Z}/2\mathbb{Z} \rightarrow 1
$$
Since $H_J/\overline{H}_J$ is abelian the commutator $[H_J,H_J]$ of $H_J$ is contained in $\overline{H}_J$. By the proof of Lemma \ref{lso} we have that an element $h \in \overline{H}_J$ is of the form $h=\begin{pmatrix}
A & 0_{r,r}\\
0_{r,r} & A^{-t}
\end{pmatrix} \text{ for } A \in GL(r)$. Then $h^{-1}=\begin{pmatrix}
A^{-1} & 0_{r,r}\\
0_{r,r} & A^{t}
\end{pmatrix} \text{ for } A \in GL(r)$. Furthermore $\tilde{S}^{-1} = \begin{pmatrix}
0_{r,r} & 2 I_{r,r} \\
-\frac{1}{2} I_{r,r} & 0_{r,r}
\end{pmatrix}$. Therefore
\begin{align*}
[\tilde{S},h] &= \tilde{S}h\tilde{S}^{-1}h^{-1} = \begin{pmatrix}
0_{r,r} & -2 I_{r,r} \\
\frac{1}{2} I_{r,r} & 0_{r,r}
\end{pmatrix} \begin{pmatrix}
A & 0_{r,r}\\
0_{r,r} & A^{-t}
\end{pmatrix} \begin{pmatrix}
0_{r,r} & 2 I_{r,r} \\
-\frac{1}{2} I_{r,r} & 0_{r,r}
\end{pmatrix} \begin{pmatrix}
A^{-1} & 0_{r,r}\\
0_{r,r} & A^{t}
\end{pmatrix} =\begin{pmatrix}
A^{-t}A^{-1} & 0_{r,r}\\
0_{r,r} & A A^{t}
\end{pmatrix}.
\end{align*}
Setting $B = A^{-t}A^{-1}$, we have $B^{-t}=(A^{-t}A^{-1})^{-t}=AA^t$ with $B \in \glnm$ symmetric. So $[H_J,H_J]$ is the subgroup of $\overline{H}_J \cong GL(r)$ generated by symmetric matrices and since by \cite[Theorem 1]{Bos86} all $r \times r$ matrices can be written as product of symmetric matrices we get $[H_J,H_J]=\overline{H}_J$. 

Then, $H / [H,H] \cong H_J / [H_J,H_J] \cong H_J / \overline{H}_J \cong H/\overline{H}$ and by the exact sequence (\ref{exseq}) we have $H/\overline{H} \cong \mathbb{Z}/2\mathbb{Z}$. Finally, by \cite[Lemma 22.2]{Bur65} $\mathbb{X}(H) \cong \mathbb{X}(H/[H,H])$, and hence $\Pic(G/H) \cong \mathbb{X}(H) \cong \mathbb{Z}/2 \mathbb{Z}$.
\end{proof}

Now, we are ready to compute the Picard rank and the colors of the wonderful variety $\mathcal{S}_{2r}$.

\begin{Proposition}\label{pic_x2r}
The Picard rank of $\mathcal{S}_{2r}$ is $\rho(\mathcal{S}_{2r}) = r$.
\end{Proposition}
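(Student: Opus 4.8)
The plan is to read off the rank of $\Pic(\mathcal{S}_{2r})$ from the excision sequence relating it to the Picard group of the open orbit, for which Proposition \ref{pic_G/H} already supplies the answer $\Pic(O_{2r})\cong\mathbb{Z}/2\mathbb{Z}$. By Theorem \ref{main1} the variety $\mathcal{S}_{2r}$ is smooth and wonderful, its open $\spn$-orbit is carried isomorphically onto $O_{2r}$ by the blow-down $f$, and the complement of that orbit is exactly the union of the $r$ boundary divisors $E_1,\dots,E_{r-1},S_r^{(r-1)}(\vr)$. Writing $D_1,\dots,D_r$ for these boundary divisors and using $\Pic=\Cl$ on the smooth $\mathcal{S}_{2r}$, restriction to the open orbit yields the exact sequence
$$
\bigoplus_{i=1}^{r}\mathbb{Z}\,[D_i]\ \xrightarrow{\ \alpha\ }\ \Pic(\mathcal{S}_{2r})\ \longrightarrow\ \Pic(O_{2r})\ \longrightarrow\ 0 .
$$
Since $\Pic(O_{2r})$ is finite, this already gives the upper bound $\rho(\mathcal{S}_{2r})\le r$.

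The key step, carrying all the content, is to prove that $\alpha$ is injective, i.e.\ that the boundary divisors are linearly independent; here I would exploit that $\spn$ is (semisimple and) simply connected. A relation $\sum_i a_i D_i\sim 0$ is witnessed by a rational function $\phi$ on $\mathcal{S}_{2r}$ with divisor supported on the boundary, so its restriction to $O_{2r}$ is an invertible regular function. But $O_{2r}=\spn/H$, and pulling back along the faithfully flat quotient $\spn\to\spn/H$ embeds $\mathcal{O}^{*}(O_{2r})/K^{*}$ into $\mathcal{O}^{*}(\spn)/K^{*}=\mathbb{X}(\spn)=0$, because a connected semisimple group coincides with its commutator subgroup and hence has no nonconstant invertible regular functions. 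Thus $\phi$ is constant, $\operatorname{div}(\phi)=0$, and every $a_i=0$. Consequently $\alpha$ is injective with free image of rank $r$, and the sequence displays $\Pic(\mathcal{S}_{2r})$ as an extension of $\mathbb{Z}/2\mathbb{Z}$ by $\mathbb{Z}^{r}$; in particular $\operatorname{rank}\Pic(\mathcal{S}_{2r})=r$.

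To conclude, since $\mathcal{S}_{2r}$ is smooth, projective and rational (being spherical), numerical and linear equivalence of divisors agree up to torsion, so $\rho(\mathcal{S}_{2r})=\operatorname{rank}\Pic(\mathcal{S}_{2r})=r$. I expect the only delicate point to be the injectivity of $\alpha$: the upper bound is formal excision, whereas the lower bound genuinely uses the vanishing of units on $O_{2r}$, that is, the semisimplicity of $\spn$.

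As an independent cross-check one can establish the linear independence geometrically from the blow-up structure of Construction \ref{ccssf}. Setting $H=f^{*}\mathcal{O}_{X_{2r}}(1)$ and choosing, for each $i$, a general curve $C_i$ contracted by the $i$-th blow-up inside a fibre of $E_i\to S_i^{(i-1)}(\vr)$, genericity forces $H\cdot C_i=0$ and $E_j\cdot C_i=0$ for $j\neq i$ while $E_i\cdot C_i<0$; adding a line $C_0$ with $H\cdot C_0=1$ and $E_i\cdot C_0=0$ produces a diagonal, hence nonsingular, intersection matrix pairing $\{H,E_1,\dots,E_{r-1}\}$ against $\{C_0,C_1,\dots,C_{r-1}\}$. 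Finally, Proposition \ref{mult} gives $S_r^{(r-1)}(\vr)\sim 2H-\sum_i m_i E_i$ with $m_i>0$, so these $r$ classes span the same space as the boundary divisors, confirming $\rho(\mathcal{S}_{2r})=r$.
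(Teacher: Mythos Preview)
Your argument is correct and follows essentially the same route as the paper. The paper invokes \cite[Proposition 2.2.1]{Br07} to obtain the short exact sequence $0\to\mathbb{Z}^r\to\Pic(\mathcal{S}_{2r})\to\Pic(G/H)\to 0$ and then applies Proposition \ref{pic_G/H}; you have reproved the content of that cited sequence directly, namely excision plus injectivity via $\mathcal{O}^{*}(\spn)/K^{*}=\mathbb{X}(\spn)=0$ for the semisimple group $\spn$, which is exactly what underlies Brion's statement.
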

\begin{proof}
As before set $G=\spn$ and let $H$ be the stabilizer of the identity. By Theorem \ref{main1} the variety $\mathcal{S}_{2r}$ is wonderful with boundary divisors $E_1,\dots, E_{r-1},S_r^{(r-1)}(\mathcal{V}_2^{2r-1})$. By \cite[Proposition 2.2.1]{Br07} there is an exact sequence
$$ 0 \rightarrow \mathbb{Z}^r \rightarrow \Pic(\mathcal{S}_{2r}) \rightarrow \Pic(G/H) \rightarrow 0$$
Hence, Proposition \ref{pic_G/H} yields that the Picard rank of $\mathcal{S}_{2r}$ is $r$.
\end{proof}

For $i=1,\dots,r$ we define the divisors $D_i$ as the strict transforms in $\mathcal{S}_{2r}$ of the divisor given by the intersection of  
$$\det \begin{pmatrix}
z_{0,0} & \dots & z_{0,i-1}\\
\vdots & \ddots & \vdots \\
z_{0,i-1} & \dots & z_{i-1,i-1}\\
\end{pmatrix}=0$$
with $X_{2r}$. 

\begin{Proposition}\label{bound_col}
The set of boundary divisors of $\mathcal{S}_{2r}$ is $\{E_1,\dots,E_{r-1},S_r^{r-1}(\mathcal{V}_2^{2r-1})\}$ while the set of colors of $\mathcal{S}_{2r}$ is $\{D_1,\dots,D_r\}$. 
\end{Proposition}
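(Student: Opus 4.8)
The statement has two halves, and the first is essentially already in hand. By Theorem \ref{main1} the variety $\mathcal{S}_{2r}$ is wonderful, so by the very definition of a wonderful variety its $\mathscr{G}$-invariant prime divisors are exactly the irreducible components of the complement of the open $\spn$-orbit, namely $E_1,\dots,E_{r-1},S_r^{(r-1)}(\vr)$. Hence the set of boundary divisors is $\{E_1,\dots,E_{r-1},S_r^{(r-1)}(\vr)\}$ with no further argument.

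For the colors I would first pin down their number. A color is a $\mathscr{B}$-invariant prime divisor that is not $\spn$-invariant, and restriction to the open orbit sets up a bijection between $\mathcal{C}(\mathcal{S}_{2r})$ and the set of $\mathscr{B}$-invariant prime divisors of $O_{2r}=\spn/H$ (every such divisor of $O_{2r}$ is automatically not $\spn$-invariant, since $O_{2r}$ is a single orbit). The plan is to invoke the standard exact sequence for a spherical homogeneous space under a semisimple, simply connected group,
$$0\longrightarrow \Lambda \longrightarrow \mathbb{Z}^{\mathcal{C}(\mathcal{S}_{2r})}\longrightarrow \Pic(O_{2r})\longrightarrow 0,$$
where $\Lambda$ is the lattice of $\mathscr{B}$-weights of $\mathscr{B}$-semiinvariant rational functions on $O_{2r}$ and the left-hand map sends a weight to the divisor of the corresponding semiinvariant (see \cite{Pe14}, \cite{ADHL15}). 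Since $\Pic(O_{2r})\cong\mathbb{Z}/2\mathbb{Z}$ is torsion by Proposition \ref{pic_G/H}, and the rank of $\Lambda$ equals the rank of the wonderful variety $\mathcal{S}_{2r}$, i.e. its number $r$ of boundary divisors, comparing ranks gives $|\mathcal{C}(\mathcal{S}_{2r})|=r$.

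Next I would produce $r$ explicit colors. Writing a Borel element as $M=\begin{pmatrix}A&0_{r,r}\\ B&A^{-t}\end{pmatrix}$ with $A\in\glnm$ lower triangular (Remark \ref{borelsym}), a direct block computation shows that under $Z\mapsto MZM^t$ the upper-left $r\times r$ block $Z_{11}$ transforms as $Z_{11}\mapsto AZ_{11}A^t$. Since $A$ is lower triangular, the leading principal $i\times i$ minor $\delta_i$ of $Z_{11}$, which is precisely the determinant defining $D_i$, satisfies $\delta_i(MZM^t)=\bigl(\prod_{k=1}^{i}a_{kk}\bigr)^2\,\delta_i(Z)$; thus $\delta_i$ is a $\mathscr{B}$-semiinvariant and each $D_i=\{\delta_i=0\}$ (strict transform) is $\mathscr{B}$-invariant. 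Because $\delta_i(I_{2r,2r})=1$ while $\delta_i$ vanishes somewhere on $O_{2r}$, the divisor $D_i$ meets the open orbit properly, so it is not $\spn$-invariant and is therefore a color.

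It remains to see that the $D_i$ are $r$ distinct colors, for then by the count they are all of them. The weights $\prod_{k\le i}a_{kk}^2$ are pairwise distinct, which already separates the $D_i$ as $\mathscr{B}$-eigendivisors, but to rule out reducibility I would exhibit, for each $i$, a \emph{private} point $Z_i\in O_{2r}$ with $\delta_i(Z_i)=0$ but $\delta_j(Z_i)\neq 0$ for $j\neq i$. Such a $Z_i$ lies on a component of $\{\delta_i=0\}$ contained in no other $\{\delta_j=0\}$, so it singles out a color $C_i$ private to $D_i$; the $C_i$ are then $r$ distinct colors, hence all of them, and the privacy forces $\{\delta_i=0\}\cap O_{2r}=C_i$ to be irreducible and equal to $D_i$. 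The main obstacle is this last concrete step: an element of $O_{2r}$ with a vanishing leading minor of $Z_{11}$ must be genuinely non-diagonal (a diagonal full-rank matrix has all leading minors nonzero), so one must use the parametrization $Z=MM^t$ with $M\in\spn$, for which $Z_{11}=AA^t+BB^t$, and check that this block can be made to realize each prescribed pattern of vanishing among the $\delta_i$. Proving $\mathscr{B}$-transitivity on $\{\delta_1\cdots\delta_r\neq 0\}$ would be an alternative route to the same conclusion, but the private-point computation is the more economical one.
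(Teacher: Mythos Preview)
Your argument is essentially the paper's, with the same block computation for $\mathscr{B}$-invariance and a count of the colors via Picard-type information. The paper counts slightly differently: rather than invoking the sequence $0\to\Lambda\to\mathbb{Z}^{\mathcal{C}}\to\Pic(O_{2r})\to 0$ directly, it first computes $\rho(\mathcal{S}_{2r})=r$ (Proposition~\ref{pic_x2r}, itself built on $\Pic(O_{2r})\cong\mathbb{Z}/2\mathbb{Z}$) and then uses that for a wonderful variety the colors freely generate $\Pic$, so there are exactly $r$ of them. The two counts are equivalent reformulations of the same input.

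On your last worry: the paper does not construct ``private points'' or otherwise verify irreducibility of the $D_i$; it simply observes that the $D_i$ are $\mathscr{B}$-stable, not $\spn$-stable, and that there are exactly $r$ colors. Your weight computation already shows that the classes of the $D_i$ are linearly independent in $\Pic(\mathcal{S}_{2r})$, and since each $D_i$ is a nonnegative combination of the $r$ colors (which form a basis of $\Pic$), linear independence of $r$ such effective combinations forces each $D_i$ to be supported on a single color; reducedness then gives $D_i$ equal to that color. So the step you flag as an obstacle is in fact handled by the weight argument you already have, without any need for the private-point construction.
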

\begin{proof}
The claim on the set of boundary divisors follows from Theorem \ref{main1}. We compute the colors. We first prove that $D_r \subseteq \xnb$ is stabilized by the Borel subgroup. Consider a matrix $Z=\begin{pmatrix}
Z_{0,0} & Z_{0,1} \\
Z_{0,1} & Z_{1,1}
\end{pmatrix}$ where the $Z_{i,j}$ are $r \times r$ matrices. Let $M =\begin{pmatrix}
A & 0_{r,r}\\
B & A^{-t}
\end{pmatrix} \in \mathscr{B}$, then
\begin{align*}
\bar{Z} = M \cdot Z \cdot M^t & =
 \begin{pmatrix}
AZ_{0,0}A^t &  A Z_{0,0} B^t+ A Z_{0,1} A^{-1}\\  
B Z_{0,0} A^t + A^{-t} Z_{0,1} A^t & B Z_{0,0} B^t + A^{-t} Z_{0,1} B^t  + B Z_{0,1} A^{-1} + A^{-t}Z_{1,1} A^{-1} 
\end{pmatrix} 
\end{align*}
and $\det(\bar{Z}_{0,0})=\det(AZ_{0,0}A^t)=\det(A)^2 \det(Z_{0,0})$ where $\det(A) \neq 0$ since $A \in \glnm$. Therefore, $D_r$ is stabilized by the Borel subgroup.

We focus now on the block $\bar{Z}_{0,0}$ of the matrix $\bar{Z}$. We divide the matrices $A$ and $Z_{0,0}$ respectively in blocks $A_{j,k}$, $W_{j,k}$ of matrices $j \times k$ as follows
$A= \begin{pmatrix}
A_{i,i}& A_{i,r-i} \\
A_{r-i,i} & A_{r-i,r-i}
\end{pmatrix}$ and $Z_{0,0}=\begin{pmatrix}
W_{i,i} & W_{i,r-i} \\
W_{r-i,i} & W_{r-i,r-i}
\end{pmatrix}$. Recall that by Remark \ref{borelsym} the matrix $A$ is lower triangular. We have
$
\bar{Z}_{0,0} 
= \begin{pmatrix}
\bar{W}_{i,i} & \bar{W}_{i,r-i} \\
\bar{W}_{r-i,i} & \bar{W}_{r-i,r-i}
\end{pmatrix}
$
with $\bar{W}_{i,i} = A_{i,i}W_{i,i}A_{i,i}^t$. The divisor $D_i$ is defined by $\det(W_{i,i})=0$ and since $\det(A)= \det(A_{i,i})\det(A_{r-i,r-i})\neq 0$ we get that $D_i$ is stabilized by $\mathscr{B}$ for $i=1, \dots r$.

As noticed in \cite[Remark 4.5.5.3]{ADHL15}, if $(X,\mathscr{G},\mathscr{B},x_0)$ is a spherical wonderful variety with colors $D_1,\dots,D_s$ the big cell $X\setminus (D_1\cup\dots \cup D_s)$ is an affine space. Therefore, it admits only constant invertible global functions and $\Pic(X)$ is generated by $D_1,\dots,D_s$. 

Therefore, in order to conclude that we found all the colors of $\mathcal{S}_{2r}$ it is enough to recall that by Proposition \ref{pic_x2r} $\mathcal{S}_{2r}$ has Picard rank $r$.
\end{proof}

In the following we will denote by $H$ the pull-back in $\xnb$ of the hyperplane section of $\xn$. By Proposition \ref{pic_x2r} $H,E_1\dots,E_{r-1}$ generate $\Pic(\xnb)$.

\begin{Proposition}\label{eff_nef}
The extremal rays of $\Eff(\xnb)$ are generated by $E_1,\dots,E_{r-1},S_r^{r-1}(\mathcal{V}_2^{2r-1})$ and the extremal rays of $\Nef(\xnb)$ are generated by $D_1,\dots,D_r$.
\end{Proposition}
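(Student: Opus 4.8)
The plan is to exploit the wonderful (hence spherical) structure of $\xnb$ from Theorem \ref{main1} together with the tangent cone computations of Propositions \ref{tcones} and \ref{symplcones}. The first input is the general fact for spherical varieties (see \cite{ADHL15}) that the pseudo-effective cone is generated by the $\mathscr{B}$-invariant prime divisors; by Proposition \ref{bound_col} these are the boundary divisors $E_1,\dots,E_{r-1},S_r^{(r-1)}(\vr)$ and the colors $D_1,\dots,D_r$. Thus $\overline{\Eff}(\xnb)=\cone(E_1,\dots,E_{r-1},S_r^{(r-1)}(\vr),D_1,\dots,D_r)$, and the whole problem becomes (i) removing the colors from the list of extremal rays of $\Eff$, and (ii) identifying the colors as the generators of $\Nef$.

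To carry this out I would first record the divisor classes in the basis $H,E_1,\dots,E_{r-1}$ of $\Pic(\xnb)$. Since the principal $i\times i$ minor has degree $i$ and vanishes along the stratum $S_k$ exactly when $k<i$ (the top-left $i\times i$ block of a general rank-$k$ matrix being invertible once $k\ge i$), one gets $[D_1]=H$ and $[D_i]=iH-\sum_{k<i}m_{ik}E_k$, with multiplicities $m_{ik}=\mult_{S_k}\{\det_{i\times i}=0\}$ read off from the determinantal tangent cones of Propositions \ref{tcones} and \ref{symplcones}; similarly Proposition \ref{mult} gives $[S_r^{(r-1)}(\vr)]=2H-\sum_k n_k E_k$ with $n_k=\mult_{S_k}S_r(\vr)$. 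I would then show that each color is a strictly positive combination of the boundary divisors; this is pure linear algebra once the $m_{ik},n_k$ are known (for $r=2$ one finds $D_1=E_1+\tfrac12 S_2^{(1)}(\vr)$ and $D_2=E_1+S_2^{(1)}(\vr)$, and the general case follows the same pattern). Consequently no color lies on an extremal ray of $\Eff(\xnb)$.

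It remains to see that the boundary divisors are extremal and that the colors cut out $\Nef$. Each $D_i$ is nef: $D_1=H$ is the pull-back of a hyperplane, and in general the section defining $D_i$ is $\mathscr{B}$-semiinvariant, so its $\spn$-orbit spans a base-point-free subsystem and $D_i$ is globally generated (for $r=2$ these are the semiample classes generating $\Nef(\mathcal{S}_4)$, cf. Theorem \ref{C}). Hence $\cone(D_1,\dots,D_r)\subseteq\Nef(\xnb)$. For extremality I would argue that each $E_j$, being the exceptional divisor of $f_j$, is rigid and spans an extremal ray, while $S_r^{(r-1)}(\vr)$ is covered by a family of curves $C_0$ (transforms of the secant lines sweeping out $S_r(\vr)$) with $S_r^{(r-1)}(\vr)\cdot C_0<0$, which makes it rigid as well. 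For the reverse inclusion $\Nef(\xnb)\subseteq\cone(D_1,\dots,D_r)$ I would use that the $r-1$ fibres $\ell_1,\dots,\ell_{r-1}$ of the blow-downs $E_j\to S_j^{(j-1)}(\vr)$ together with $C_0$ are effective, so any nef class is non-negative on all of them; the intersection numbers $D_i\cdot\ell_j$ and $D_i\cdot C_0$ computed from the classes above show that this system of inequalities cuts out precisely $\cone(D_1,\dots,D_r)$ (for $r=2$, $D\cdot\ell_1\ge 0$ and $D\cdot C_0\ge 0$ read as $\beta\le 0$ and $\alpha\ge -2\beta$ for $D=\alpha H+\beta E_1$, which is exactly $\cone(H,2H-E_1)$), and since $D_1,\dots,D_r$ is a basis of $\Pic(\xnb)_{\mathbb{Q}}$ by Proposition \ref{pic_x2r} the two cones coincide.

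The main obstacle is the multiplicity bookkeeping and the passage from local to global intersection numbers. Concretely, one must extract all the $m_{ik}$ and $n_k$ from the tangent cones of Proposition \ref{symplcones}, and then verify two linear-algebra facts simultaneously: that the colors sit in the interior of the boundary cone (for $\Eff$), and that the $r$ curves $\ell_1,\dots,\ell_{r-1},C_0$ are effective and their non-negativity cone is exactly $\cone(D_1,\dots,D_r)$ (for $\Nef$). Identifying the last curve $C_0$ and the contraction it bounds, and checking that these curves really control the relevant part of $\NEbar(\xnb)$, is the delicate point; everything else is determined by the normal-crossing orbit structure of Theorem \ref{main1}.
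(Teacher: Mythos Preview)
Your overall strategy is sound and shares its opening move with the paper: both start from \cite[Proposition 4.5.4.4]{ADHL15} and Proposition \ref{bound_col} to get $\Eff(\xnb)$ generated by the boundary divisors together with the colors. From that point on, however, the paper takes a much shorter route than the one you sketch.

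For the effective cone, the paper does not compute the classes $[D_i]$ in the basis $H,E_1,\dots,E_{r-1}$ at all. Instead it uses the inclusion $i:\xnb\hookrightarrow\mathcal{Q}(2r-1)_{r-1}$ from Constructions \ref{ccq} and \ref{ccssf}, which identifies Picard groups, and the fact from \cite[Section 2]{Ce15} that on the ambient space of complete quadrics the linear system corresponding to each color induces a birational morphism whose exceptional locus lies in the union of the $E_j^q$. Restricting to $\xnb$ gives a birational morphism $\xnb\to Z_i$, so each $D_i$ is big and hence lies in the interior of $\Eff(\xnb)$. This bypasses your multiplicity bookkeeping entirely, and that matters: the paper itself only carries out those multiplicity computations for $r=2,3$ (Proposition \ref{mcd_4}, Lemma \ref{rel_S6}), and already for $r=3$ the first blow-up is along the singular locus of $X_6$, so the formula of Remark \ref{stpb} does not apply directly and one must argue with explicit test curves. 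Your assertion that ``the general case follows the same pattern'' is therefore optimistic; making it rigorous for all $r$ is precisely the obstacle the paper sidesteps.

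For the nef cone, the paper simply invokes \cite[Section 2.6]{Br89}, which says that for a wonderful variety the colors generate $\Nef$. Your construction of the dual curves $\ell_1,\dots,\ell_{r-1},C_0$ and verification that their non-negativity cone equals $\cone(D_1,\dots,D_r)$ would recover this, but again presupposes the class computations and the identification of $C_0$ that you yourself flag as delicate.

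In short: your plan is correct in principle but trades two short citations for a substantial amount of linear algebra that you have not carried out and that, for general $r$, is genuinely nontrivial because of the singular blow-up centres. The paper's approach buys uniformity in $r$; yours would buy explicitness, but only once the multiplicities $m_{ik},n_k$ are actually computed.
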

\begin{proof}
By \cite[Proposition 4.5.4.4]{ADHL15} and Proposition \ref{bound_col} $\Eff(\xnb)$ is generated by $E_1,\dots,E_{r-1},S_r^{r-1}(\mathcal{V}_2^{2r-1})$ and $D_1,\dots,D_r$. 

Note that by Constructions \ref{ccq} and \ref{ccssf} there as an inclusion $i:\xnb\rightarrow\mathcal{Q}(2r-1)_{r-1}$ inducing an isomorphism of the Picard groups. By \cite[Section 2]{Ce15} the linear system on $\mathcal{Q}(2r-1)_{r-1}$ that restricts to the linear system of $D_i$ on $\xnb$ induces a birational morphism $\mathcal{Q}(2r-1)_{r-1}\rightarrow W_i$ whose exceptional locus is contained in the union of the exceptional divisors in Construction \ref{ccq}. Therefore, $D_i$ induces a birational morphism $\xnb\rightarrow Z_i$ and hence $D_i$ lies in the interior of the effective cone of $\xnb$ for any $i = 1,\dots, r$. This proves that the effective cone of $\xnb$ is generated by $E_1,\dots,E_{r-1},S_r^{r-1}(\mathcal{V}_2^{2r-1})$. Finally, by \cite[Section 2.6]{Br89} $D_1,\dots,D_r$ generate the extremal rays of the nef cone.
\end{proof}

In order to study the birational geometry of $\mathcal{S}_{2r}$ we will need the following result.

\begin{Proposition}\label{cones_col}
Let $H_i^r$ be the divisor in $X_{2r}\subset\mathbb{P}^N$ cut out by the determinant of the $i\times i$ top left submatrix of the matrix $Z$ in (\ref{matrix}). The tangent cone of $H_i^r$ at a point of $S_k(\vr)\setminus S_{k-1}(\vr)$ for $i=2, \dots,r$ and $k <i$ is a cone with vertex of dimension $k(2r+1-k)$ over $H_{i-k}^{r-k}$. 
\end{Proposition}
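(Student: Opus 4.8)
\section*{Proof proposal}

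The plan is to compute the tangent cone at the standard rank-$k$ point $p_k=(p_{a,b})$ with $p_{a,a}=1$ for $a=0,\dots,k-1$ and $p_{a,b}=0$ otherwise, exactly as in Propositions~\ref{tcones} and~\ref{symplcones}. Since $H_i^r$ is stabilized by the Borel subgroup $\mathscr{B}$ (this is precisely the computation carried out in the proof of Proposition~\ref{bound_col}) and $p_k\in S_k(\vr)\setminus S_{k-1}(\vr)$ lies in the relevant stratum, it suffices to work at $p_k$; note that $p_k\in H_i^r$, since $k<i$ forces the top left $i\times i$ block of $p_k$ to have rank $k<i$ and hence to be singular. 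As before I pass to the chart $z_{0,0}\neq 0$, apply the change of coordinates $z_{a,a}\mapsto z_{a,a}-z_{0,0}$ for $a=1,\dots,k-1$ and set $z_{0,0}=1$, so that $p_k$ becomes the origin.

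The heart of the argument is the lowest degree part of the defining equation $\det M_i$ of $H_i^r$, where $M_i$ is the top left $i\times i$ submatrix of $Z$. I would split $M_i=\left(\begin{smallmatrix}P & Q\\ Q^{t} & R\end{smallmatrix}\right)$, with $P$ the top left $k\times k$ block, $Q$ of size $k\times(i-k)$, and $R=(z_{a,b})_{k\le a,b\le i-1}$. The substitution only touches the diagonal of $P$, so at $p_k$ one has $P=I_{k,k}+(\text{terms vanishing at }p_k)$, whence $\det P$ is a unit in the local ring, while the entries of $Q$ and of $R$ are coordinates vanishing at $p_k$. The Schur complement identity $\det M_i=\det(P)\,\det\!\big(R-Q^{t}P^{-1}Q\big)$ then shows, since $Q^{t}P^{-1}Q$ is of order two, that the lowest degree part of $\det M_i$ equals $\det(R)$ up to the nonzero scalar $\det(P)(p_k)$; this is a form of degree $i-k$ in the coordinates $z_{a,b}$ with $k\le a,b\le i-1$. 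Consequently $TC_{p_k}H_i^r\subseteq TC_{p_k}\xn\cap\{\det R=0\}$.

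Next I would feed in Proposition~\ref{symplcones}: $TC_{p_k}\xn$ is a cone over $X_{2(r-k)}$, the latter embedded in the coordinates indexed by $\{k,\dots,r-1\}\cup\{r+k,\dots,2r-1\}$. Re-indexing these as $0,\dots,2(r-k)-1$ turns the block $R$ into the top left $(i-k)\times(i-k)$ submatrix of the $X_{2(r-k)}$-matrix, so that, using $i\le r$ (hence $\{k,\dots,i-1\}\subseteq\{k,\dots,r-1\}$), the equation $\det R=0$ cuts out exactly $H_{i-k}^{r-k}\subset X_{2(r-k)}$ in the base. Because $\det R$ involves only base coordinates, intersecting $TC_{p_k}\xn$ with $\{\det R=0\}$ leaves the vertex untouched and merely replaces the base $X_{2(r-k)}$ by $H_{i-k}^{r-k}$; thus $TC_{p_k}\xn\cap\{\det R=0\}$ is a cone over $H_{i-k}^{r-k}$ with the same vertex as $TC_{p_k}\xn$, whose dimension is recorded in Proposition~\ref{symplcones}.

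Finally, I would promote the inclusion to an equality by a dimension count. Since $TC_{p_k}\xn$ is irreducible and $\det R$ does not vanish identically on it (a generic element of $X_{2(r-k)}$ has full rank, so its leading principal minors are nonzero), the intersection is a proper hypersurface section; by Proposition~\ref{fun} one has $\dim H_{i-k}^{r-k}=(r-k)(r-k+1)-1$, so $TC_{p_k}\xn\cap\{\det R=0\}$ has dimension $\dim H_i^r$. As this cone is irreducible — its base $H_{i-k}^{r-k}$ being a color, hence a prime divisor, by Proposition~\ref{bound_col} — and contains the equidimensional $TC_{p_k}H_i^r$ of the same dimension, the two must coincide. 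I expect the main obstacle to be precisely this last step: one must rule out that adjoining $\det M_i$ to the ideal of $\xn$ lowers the degree of the leading form or creates an excess component, equivalently that $\det R$ is not a zero divisor on $TC_{p_k}\xn$. The dimension comparison, together with the irreducibility of the cone over $H_{i-k}^{r-k}$, is what closes this gap.
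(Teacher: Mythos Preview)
Your approach coincides with the paper's: both compute at the standard rank-$k$ point $p_k$ after the usual chart and translation, and identify the leading form of $\det M_i$ with the determinant of the residual $(i-k)\times(i-k)$ block $R$, which together with the tangent-cone equations from Proposition~\ref{symplcones} cuts out a cone over $H_{i-k}^{r-k}\subset X_{2(r-k)}$. Your Schur-complement computation of the leading form and your irreducibility/dimension argument upgrading the inclusion to an equality are correct elaborations of what the paper compresses into a single sentence; the only soft spot is that $\mathscr{B}$-invariance of $H_i^r$ does not by itself reduce the problem to $p_k$ (the Borel does not act transitively on $S_k\setminus S_{k-1}$), but the paper likewise works only at $p_k$ without further comment.
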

\begin{proof}
It is enough to note that the tangent cone of $H_i^r$ at the point $p_k = (p_{i,j})_{i,j=0,\dots,2r-1}$, where $p_{i,i}=1$ for $i=0, \dots, k-1$ and $p_{i,j}=0$ otherwise, is cut out by 
$$\det \left(\begin{array}{cccc}
z_{k,k} & z_{k,k+1} & \hdots & z_{k,i-1} \\ 
z_{k,k+1} & z_{k+1,k+1} & \hdots & z_{k+1,i-1} \\ 
\vdots & \vdots & \ddots & \vdots \\ 
z_{k,i-1} & z_{k+1,i-1} & \hdots & z_{i-1,i-1}
\end{array}\right) = 0$$
and by the equations for the tangent cone of $X_{2r}$ in the proof of Proposition \ref{symplcones}.
\end{proof}

\section{Birational geometry of $\mathcal{S}_{2r}$}\label{birS2r}
The stable base locus of an effective $\mathbb{Q}$-divisor on a normal $\mathbb{Q}$-factorial projective variety $X$ has been defined in (\ref{sbl}). Since stable base loci do not behave well with respect to numerical equivalence \cite[Example 10.3.3]{La04II}, we will assume that $h^{1}(X,\mathcal{O}_X)=0$ so that linear and numerical equivalence of $\mathbb{Q}$-divisors coincide. 

Then numerically equivalent $\mathbb{Q}$-divisors on $X$ have the same stable base locus, and the pseudo-effective cone $\overline{\Eff}(X)$ of $X$ can be decomposed into chambers depending on the stable base locus of the corresponding linear series. The resulting decomposition is called \textit{stable base locus decomposition}. 

\begin{Remark}\label{SBLMC}
Recall that two divisors $D_1,D_2$ are said to be \textit{Mori equivalent} if $\textbf{B}(D_1) = \textbf{B}(D_2)$ and the following diagram of rational maps is commutative
   \[
  \begin{tikzpicture}[xscale=1.5,yscale=-1.2]
    \node (A0_1) at (1, 0) {$X$};
    \node (A1_0) at (0, 1) {$X(D_1)$};
    \node (A1_2) at (2, 1) {$X(D_2)$};
    \path (A1_0) edge [->]node [auto] {$\scriptstyle{}$} node [rotate=180,sloped] {$\scriptstyle{\widetilde{\ \ \ }}$} (A1_2);
    \path (A0_1) edge [->,dashed]node [auto] {$\scriptstyle{\phi_{D_2}}$} (A1_2);
    \path (A0_1) edge [->,swap, dashed]node [auto] {$\scriptstyle{\phi_{D_1}}$} (A1_0);
  \end{tikzpicture}
  \]
where the horizontal arrow is an isomorphism. Therefore, the Mori chamber decomposition is a, possibly trivial, refinement of the stable base locus decomposition.
\end{Remark}

Let $X$ be a normal $\mathbb{Q}$-factorial variety with free and finitely generated divisor class group $\Cl(X)$. Fix a subgroup $G$ of the group of Weil divisors on $X$ such that the canonical map $G\rightarrow\Cl(X)$, mapping a divisor $D\in G$ to its class $[D]$, is an isomorphism. The \textit{Cox ring} of $X$ is defined as
$$\Cox(X) = \bigoplus_{[D]\in \Cl(X)}H^0(X,\mathcal{O}_X(D))$$
where $D\in G$ represents $[D]\in\Cl(X)$, and the multiplication in $\Cox(X)$ is defined by the standard multiplication of homogeneous sections in the field of rational functions on $X$. If $\Cox(X)$ is finitely generated as an algebra over the base field, then $X$ is said to be a \textit{Mori dream space}. A perhaps more enlightening definition, especially for the relation with the minimal model program, is the following. 

\begin{Definition}\label{def:MDS} 
A normal projective $\mathbb{Q}$-factorial variety $X$ is called a \emph{Mori dream space}
if the following conditions hold:
\begin{enumerate}
\item[-] $\Pic{(X)}$ is finitely generated, or equivalently $h^1(X,\mathcal{O}_X)=0$,
\item[-] $\Nef{(X)}$ is generated by the classes of finitely many semi-ample divisors,
\item[-] there is a finite collection of small $\mathbb{Q}$-factorial modifications
 $f_i: X \dasharrow X_i$, such that each $X_i$ satisfies the second condition above, and $
 \Mov{(X)} \ = \ \bigcup_i \  f_i^*(\Nef{(X_i)})$.
\end{enumerate}
\end{Definition}

The collection of all faces of all cones $f_i^*(\Nef{(X_i)})$ above forms a fan which is supported on $\Mov(X)$.
If two maximal cones of this fan, say $f_i^*(\Nef{(X_i)})$ and $f_j^*(\Nef{(X_j)})$, meet along a facet,
then there exist a normal projective variety $Y$, a small modification $\varphi:X_i\dasharrow X_j$, and $h_i:X_i\rightarrow Y$, $h_j:X_j\rightarrow Y$ small birational morphisms of relative Picard number one such that $h_j\circ\varphi = h_i$. The fan structure on $\Mov(X)$ can be extended to a fan supported on $\Eff(X)$ as follows. 

\begin{Definition}\label{MCD}
Let $X$ be a Mori dream space.
We describe a fan structure on the effective cone $\Eff(X)$, called the \emph{Mori chamber decomposition}.
We refer to \cite[Proposition 1.11]{HK00} and \cite[Section 2.2]{Ok16} for details.
There are finitely many birational contractions from $X$ to Mori dream spaces, denoted by $g_i:X\dasharrow Y_i$.
The set $\Exc(g_i)$ of exceptional prime divisors of $g_i$ has cardinality $\rho(X/Y_i)=\rho(X)-\rho(Y_i)$.
The maximal cones $\mathcal{C}$ of the Mori chamber decomposition of $\Eff(X)$ are of the form: $\mathcal{C}_i \ = \left\langle g_i^*\big(\Nef(Y_i)\big) , \Exc(g_i) \right\rangle$. We call $\mathcal{C}_i$ or its interior $\mathcal{C}_i^{^\circ}$ a \emph{maximal chamber} of $\Eff(X)$.
\end{Definition}

If $X$ is a Mori dream space, satisfying then the condition $h^1(X,\mathcal{O}_X)=0$, determining the stable base locus decomposition of $\Eff(X)$ is a first step in order to compute its Mori chamber decomposition. 

\begin{Remark}\label{sphMDS}
By the work of M. Brion \cite{Br93} we have that $\mathbb{Q}$-factorial spherical varieties are Mori dream spaces. An alternative proof of this result can be found in \cite[Section 4]{Pe14}. In particular, by Theorem \ref{main1} the wonderful compactification $\mathcal{S}_{2r}$ is a Mori dream space.
\end{Remark}

\begin{Remark}\label{toric}
Recall that by \cite[Proposition 2.11]{HK00} given a Mori Dream Space $X$ there is an embedding $i:X\rightarrow \mathcal{T}_X$ into a simplicial projective toric variety $\mathcal{T}_X$ such that $i^{*}:\Pic(\mathcal{T}_X)\rightarrow \Pic(X)$ is an isomorphism inducing an isomorphism $\Eff(\mathcal{T}_X)\rightarrow \Eff(X)$. Furthermore, the Mori chamber decomposition of $\Eff(\mathcal{T}_X)$ is a refinement of the Mori chamber decomposition of $\Eff(X)$. Indeed, if $\Cox(X) \cong \frac{K[T_1,\dots,T_s]}{I}$ where the $T_i$ are homogeneous generators with non-trivial effective $\Pic(X)$-degrees then $\Cox(\mathcal{T}_X)\cong K[T_1,\dots,T_s]$.

Since the variety $\mathcal{T}_{X}$ is toric, the Mori chamber decomposition of $\Eff(\mathcal{T}_{X})$ can be computed by means of the Gelfand–Kapranov–Zelevinsky, GKZ for short, decomposition \cite[Section 2.2.2]{ADHL15}. Let us consider the family $\mathcal{W}$ of vectors in $\Pic(\mathcal{T}_{X})$ given by the generators of $\Cox(\mathcal{T}_{X})$, and let $\Omega(\mathcal{W})$ be the set of all convex polyhedral cones generated by some of the vectors in $\mathcal{W}$. By \cite[Construction 2.2.2.1]{ADHL15} the GKZ chambers of $\Eff(\mathcal{T}_{X})$ are given by the intersections of all the cones in $\Omega(\mathcal{W})$ containing a fixed divisor in $\Eff(\mathcal{T}_{X})$.
\end{Remark}

\begin{Remark}\label{gen_cox}
Let $(X,\mathscr{G},\mathscr{B},x_0)$ be a projective spherical variety. Consider a divisor $D$ on $X$, and let $f_D$ be the, unique up to constants, section of $\mathcal{O}_X(D)$ associated to $D$. We will denote by $\lin_K(\mathscr{G}\cdot D)\subseteq\Cox(X)$ the finite-dimensional vector subspace of $\Cox(X)$ spanned by the orbit of $f_D$ under the action of $\mathscr{G}$ that is the smallest linear subspace of $\Cox(X)$ containing the $\mathscr{G}$-orbit of $f_D$.

By \cite[Theorem 4.5.4.6]{ADHL15} if $\mathscr{G}$ is a semi-simple and simply connected algebraic group and $(X,\mathscr{G},\mathscr{B},x_0)$ is a spherical variety with boundary divisors $E_1,\dots,E_r$ and colors $D_1,\dots,D_s$ then $\Cox(X)$ is generated as a $K$-algebra by the canonical sections of the $E_i$'s and the finite dimensional vector subspaces $\lin_{K}(\mathscr{G}\cdot D_i)\subseteq \Cox(X)$ for $1\leq i\leq s$.
\end{Remark}
\begin{Definition}
Let $X$ be a normal projective $\mathbb{Q}$-factorial variety. We say that $X$ is weak Fano if $-K_X$ is nef and big. 
\end{Definition}

By \cite[Corollary 1.3.2]{BCHM} a weak Fano variety is a Mori dream space.

\begin{Remark}\label{stpb}
Let $Y$ be a smooth and irreducible subvariety of a smooth variety $X$, and let $f:Bl_YX\rightarrow X$ be the blow-up of $X$ along $Y$ with exceptional divisor $E$. Then for any divisor $D\in \Pic(X)$ in $\Pic(Bl_YX)$ we have
$$\widetilde{D} \sim f^{*}D-\mult_{Y}(D) E$$
where $\widetilde{D}\subset Bl_YX$ is the strict transform of $D$, and $\mult_Y(D)$ is the multiplicity of $D$ at a general point of $Y$. 
\end{Remark}

\begin{Corollary}\label{Cox_gen}
The Cox ring of $\mathcal{S}_{2r}$ is generated by the sections of $D_1,\dots D_r,E_1,\dots, E_{r-1}, S^{(r-1)}_r(\mathcal{V}_r^{2r-1})$.
\end{Corollary}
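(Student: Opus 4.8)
The plan is to deduce this directly from the structure theorem for Cox rings of spherical varieties recalled in Remark \ref{gen_cox}, namely \cite[Theorem 4.5.4.6]{ADHL15}. First I would verify that all of its hypotheses are met for $\mathcal{S}_{2r}$: the acting group $\mathscr{G}=\spn$ is semi-simple and simply connected, and by Theorem \ref{main1} the quadruple $(\mathcal{S}_{2r},\spn,\mathscr{B},x_0)$ is a wonderful, hence spherical, variety. Moreover, by Proposition \ref{bound_col} the set of boundary divisors of $\mathcal{S}_{2r}$ is $\{E_1,\dots,E_{r-1},S_r^{(r-1)}(\mathcal{V}_2^{2r-1})\}$ and the set of colors is $\{D_1,\dots,D_r\}$.

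With these inputs in hand, applying \cite[Theorem 4.5.4.6]{ADHL15} yields that $\Cox(\mathcal{S}_{2r})$ is generated as a $K$-algebra by the canonical sections of the boundary divisors $E_1,\dots,E_{r-1},S_r^{(r-1)}(\mathcal{V}_2^{2r-1})$ together with the finite-dimensional subspaces $\lin_K(\spn\cdot D_i)\subseteq\Cox(\mathcal{S}_{2r})$ attached to the colors $D_1,\dots,D_r$.

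The last step is to pass from this slightly sharper generating set to the one in the statement. By its very definition $\lin_K(\spn\cdot D_i)$ is the linear span of the $\spn$-orbit of the section $f_{D_i}$ of $\mathcal{O}_{\mathcal{S}_{2r}}(D_i)$, so it is contained in $H^0(\mathcal{S}_{2r},\mathcal{O}_{\mathcal{S}_{2r}}(D_i))$. Replacing each generating subspace $\lin_K(\spn\cdot D_i)$ by the full space of sections of $D_i$ therefore only enlarges the generating set; and the canonical section of a boundary divisor is in particular one of its sections. Hence $\Cox(\mathcal{S}_{2r})$ is generated by the sections of $D_1,\dots,D_r,E_1,\dots,E_{r-1},S_r^{(r-1)}(\mathcal{V}_2^{2r-1})$, as claimed.

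I do not expect a genuine obstacle here: all the substantive work has already been carried out, namely the sphericity and wonderfulness of $\mathcal{S}_{2r}$ (Theorem \ref{main1}) and the explicit identification of its boundary divisors and colors (Proposition \ref{bound_col}). The only subtlety worth flagging is the inclusion $\lin_K(\spn\cdot D_i)\subseteq H^0(\mathcal{S}_{2r},\mathcal{O}_{\mathcal{S}_{2r}}(D_i))$, which is exactly what guarantees that enlarging the $\mathscr{G}$-orbit spans to the full section spaces cannot destroy generation.
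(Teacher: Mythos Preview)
Your proposal is correct and follows exactly the same route as the paper: the paper's proof simply says ``This follows from Proposition \ref{bound_col} and Remark \ref{gen_cox}.'' You have just unpacked these references, verifying the hypotheses of \cite[Theorem 4.5.4.6]{ADHL15} and making explicit the (trivial) passage from $\lin_K(\spn\cdot D_i)$ to the full space of sections of $D_i$.
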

\begin{proof}
This follows from Proposition \ref{bound_col} and Remark \ref{gen_cox}.
\end{proof}

Our aim is to study the Mori chamber decomposition of the wonderful compactification $\mathcal{S}_{2r}$. Since $\mathcal{S}_{2}\cong\mathbb{P}^2$ the first interesting case is for $r = 2$.

\begin{Proposition}\label{mcd_4}
For the variety $\mathcal{S}_4$ we have that $\Pic(\mathcal{S}_4)$ is generated by $D_1, E_1$. Furthermore, $D_1\sim H$, $D_2 \sim 2H-E_1$, $S_2^{(1)}(\mathcal{V}_2^3) \sim 2 H - 2 E_1$, and $\Cox(\mathcal{S}_4)$ is generated by the sections of $D_1,D_2,E_1,S^{(1)}_2(\mathcal{V}_2^3)$. The Mori chamber decomposition of $\Eff(\mathcal{S}_4)$ has three chambers as displayed in the following picture:
$$
\begin{tikzpicture}[line cap=round,line join=round,>=triangle 45,x=1.0cm,y=1.0cm]
xmin=2.5,
xmax=7.5,
ymin=1.75,
ymax=5.301934941656083,
xtick={2.5,3.0,...,5.5},
ytick={2.0,2.5,...,5.0},]
\clip(2.1,1.65) rectangle (7.5,5.301934941656083);
\draw [->,line width=0.1pt] (3.,4.) -- (3.,5.);
\draw [->,line width=0.1pt] (3.,4.) -- (4.,4.);
\draw [->,line width=0.1pt] (3.,4.) -- (5.,3.);
\draw [->,line width=0.1pt] (3.,4.) -- (5.,2.);
\draw [shift={(3.,4.)},line width=0.4pt,fill=black,fill opacity=0.15000000596046448]  (0,0) --  plot[domain=-0.46364760900080615:0.,variable=\t]({1.*0.6965067581669063*cos(\t r)+0.*0.6965067581669063*sin(\t r)},{0.*0.6965067581669063*cos(\t r)+1.*0.6965067581669063*sin(\t r)}) -- cycle ;
\begin{scriptsize}
\draw[color=black] (3.085808915158281,5.121548796059524) node {$E_1$};
\draw[color=black] (4.25,4.1) node {$D_1$};
\draw[color=black] (5.25,3.119262579937719) node {$D_2$};
\draw[color=black] (5.6,2.118119471876817) node {$S_2^{(1)}(\mathcal{V}_2^3)$};
\end{scriptsize}
\end{tikzpicture}
$$
and the movable cone coincides with the nef cone generated by $D_1$ and $D_2$.
\end{Proposition}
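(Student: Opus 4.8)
The plan is to pin down the three relevant divisor classes via tangent‑cone multiplicities, read off the two cones from the results already proved, and then realize the three chambers by explicit birational models.

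\textbf{Linear equivalences and generators.} First I would compute the multiplicities along the blown‑up center $\mathcal{V}_2^3$ and apply Remark \ref{stpb}. The hyperplane section $\{z_{0,0}=0\}$ does not contain $\mathcal{V}_2^3$, since a general rank‑one matrix $vv^t$ has $v_0^2\neq 0$; hence $\mult_{\mathcal{V}_2^3}D_1=0$ and $D_1\sim H$. Combined with Proposition \ref{pic_x2r} this gives $\Pic(\mathcal{S}_4)=\langle H,E_1\rangle=\langle D_1,E_1\rangle$. For $D_2=H_2^2$, Proposition \ref{cones_col} with $r=2$, $i=2$, $k=1$ produces a tangent cone along $\mathcal{V}_2^3$ that is a cone over the line $H_1^1\subset X_2\cong\mathbb{P}^2$, hence of degree one, so $\mult_{\mathcal{V}_2^3}D_2=1$ and $D_2\sim 2H-E_1$. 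For the boundary divisor, Proposition \ref{x4g14} (equivalently Proposition \ref{symplcones} with $r=2$, $h=2$, $k=1$) shows the tangent cone of $S_2(\mathcal{V}_2^3)$ at a rank‑one point is a cone over a conic, so $\mult_{\mathcal{V}_2^3}S_2(\mathcal{V}_2^3)=2$; together with $S_2(\mathcal{V}_2^3)\sim 2H$ from Proposition \ref{mult} this yields $S_2^{(1)}(\mathcal{V}_2^3)\sim 2H-2E_1$. The claim on $\Cox(\mathcal{S}_4)$ is then immediate from Corollary \ref{Cox_gen}.

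\textbf{Bounding the chamber count.} By Proposition \ref{eff_nef} the effective cone is $\langle E_1,S_2^{(1)}(\mathcal{V}_2^3)\rangle$ and the nef cone is $\langle D_1,D_2\rangle$. In the basis $(H,E_1)$ the four generators of $\Cox(\mathcal{S}_4)$ have classes $E_1=(0,1)$, $D_1=(1,0)$, $D_2=(2,-1)$, $S_2^{(1)}(\mathcal{V}_2^3)=(2,-2)$. I would then invoke Remark \ref{toric}: embedding $\mathcal{S}_4$ into the toric variety $\mathcal{T}$ whose Cox ring is the polynomial ring on these four generators, the GKZ decomposition of $\Eff(\mathcal{T})$ refines the Mori chamber decomposition of $\Eff(\mathcal{S}_4)$. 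Ordering the rays by slope, the GKZ chambers are exactly $\langle E_1,D_1\rangle$, $\langle D_1,D_2\rangle$ and $\langle D_2,S_2^{(1)}(\mathcal{V}_2^3)\rangle$, so there are at most three Mori chambers.

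\textbf{Identifying the models and the movable cone.} To see that the decomposition is exactly these three chambers, and to compute $\Mov$, I would exhibit the three birational models of Definition \ref{MCD}. The middle chamber is $\Nef(\mathcal{S}_4)$, realized by the identity with model $\mathcal{S}_4$. The ray $D_1\sim H$ is the pull‑back of the hyperplane class of $X_4$ and induces the blow‑down $f:\mathcal{S}_4\to X_4\cong\mathbb{G}(1,4)$ with $\Exc(f)=\{E_1\}$, producing the chamber $\langle D_1,E_1\rangle$. The ray $D_2\sim 2H-E_1$ is the strict transform of the system of quadrics through $\mathcal{V}_2^3$ and induces a birational morphism contracting the divisor $S_2^{(1)}(\mathcal{V}_2^3)$, producing the chamber $\langle D_2,S_2^{(1)}(\mathcal{V}_2^3)\rangle$. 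The three models are pairwise distinct, so the three chambers are genuine. Since both boundary rays of $\Nef(\mathcal{S}_4)$ induce \emph{divisorial} contractions, $\mathcal{S}_4$ admits no nontrivial small $\mathbb{Q}$‑factorial modification; equivalently, by Remark \ref{stpb} every class in the outer chambers acquires either $E_1$ or $S_2^{(1)}(\mathcal{V}_2^3)$ as a fixed component (adding a positive multiple of the contracted divisor does not change $H^0$), hence is not movable. Therefore $\Mov(\mathcal{S}_4)=\Nef(\mathcal{S}_4)=\langle D_1,D_2\rangle$.

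\textbf{Main obstacle.} The delicate point is upgrading the GKZ \emph{upper} bound to an exact count: the toric argument alone does not exclude a coarsening, so one must genuinely identify the two boundary contractions of $\Nef(\mathcal{S}_4)$ — the blow‑down to $\mathbb{G}(1,4)$ and the contraction of $S_2^{(1)}(\mathcal{V}_2^3)$ — and verify that both are divisorial and yield distinct models. This single step simultaneously forces exactly three chambers and gives $\Mov(\mathcal{S}_4)=\Nef(\mathcal{S}_4)$.
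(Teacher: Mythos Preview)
Your proof is correct and tracks the paper's argument closely for the linear equivalences (Propositions \ref{symplcones}, \ref{mult}, \ref{cones_col} together with Remark \ref{stpb}), the Cox ring generators (Corollary \ref{Cox_gen}), and the GKZ upper bound on the number of chambers (Remark \ref{toric}).

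The one place you diverge is in pinning down the exact chamber count. You exhibit the three birational models explicitly, in particular asserting that $D_2$ contracts $S_2^{(1)}(\mathcal{V}_2^3)$ divisorially --- true, but not justified at this stage (it is only established later, in Remark \ref{contr4}). The paper bypasses this entirely: since Proposition \ref{eff_nef} gives $\Nef(\mathcal{S}_4)=\langle D_1,D_2\rangle$ and $\Eff(\mathcal{S}_4)=\langle E_1,S_2^{(1)}(\mathcal{V}_2^3)\rangle$, and the nef cone is always a maximal Mori chamber, the two rays $D_1,D_2$ lying strictly inside $\Eff$ must remain as walls in any coarsening of the GKZ fan. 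Hence ``no ray can be removed'' and the three chambers are forced, with no need to name the contractions. Your route buys a concrete geometric picture of the models and of why $\Mov=\Nef$; the paper's route is shorter and avoids precisely the unproved step you flagged as the main obstacle.
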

\begin{proof}
Since $\mathcal{S}_4$ is the blow-up of a smooth variety along a smooth subvariety the relations $D_2 \sim 2H-E_1$, $S_2^{(1)}(\mathcal{V}_2^3) \sim 2 H - 2 E_1$ follow from Propositions \ref{symplcones}, \ref{mult}, \ref{cones_col} and Remark \ref{stpb}.

The statement on the generators of the Cox ring follows from Corollary \ref{Cox_gen}. Furthermore, by Remarks \ref{toric} and \ref{gen_cox} the Mori chamber decomposition of $\Eff(\mathcal{S}_4)$ is a, possibly trivial, coarsening of the decomposition in the statement. On the other hand, by Proposition \ref{eff_nef} we know that $H$ and $2H-E_1$ generate $\Nef(\mathcal{S}_4)$ while $E_1$ and $2H-2E_1$ generate $\Eff(\mathcal{S}_4)$. So no ray can be removed and the above decomposition coincides with the Mori chamber decomposition of $\Eff(\mathcal{S}_4)$.
\end{proof}

Next, we consider the case $r=3$.

\begin{Lemma}\label{rel_S6}
For the variety $\mathcal{S}_6$ the Picard group $\Pic(\mathcal{S}_6)$ is generated by $H, E_1, E_2$, and we have the following relations: $D_1\sim H$, $D_2 \sim 2H-E_1$, $D_3 \sim 3H -2E_1-E_2$ and $S_3^2(\mathcal{V}_2^5) \sim 2H-2E_1-2E_2$.
\end{Lemma}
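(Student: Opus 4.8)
The statement about $\Pic(\mathcal{S}_6)$ is immediate: by Proposition \ref{pic_x2r} we have $\rho(\mathcal{S}_6)=r=3$, and the classes $H,E_1,E_2$ generate $\Pic(\mathcal{S}_6)$. So the content of the lemma is the four linear equivalences, and the plan is to obtain each of them from the strict transform formula of Remark \ref{stpb}, applied along the two successive centers $S_1(\mathcal{V}_2^5)=\mathcal{V}_2^5$ and $S_2(\mathcal{V}_2^5)$ of the blow-ups $f_1,f_2$ defining $\mathcal{S}_6=X_6^{(2)}$. Concretely, for each of the divisors $D_1,D_2,D_3,S_3^2(\mathcal{V}_2^5)$ I would determine (a) its class on $X_6$ as a multiple of the hyperplane section, which is the coefficient of $H$, and (b) its multiplicities along $\mathcal{V}_2^5$ and along the strict transform of $S_2(\mathcal{V}_2^5)$, which are the coefficients of $E_1$ and $E_2$.

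For step (a): each $D_i$ is the strict transform of $H_i^3=\{\det(i\times i\ \text{top-left block})=0\}\cap X_6$, a divisor of degree $i$, hence $f_*D_i= iH$ and the coefficient of $H$ is $i$. The divisor $S_3^2(\mathcal{V}_2^5)$ is the strict transform of $S_3(\mathcal{V}_2^5)=Y_3$, which by Proposition \ref{mult} satisfies $S_3(\mathcal{V}_2^5)\sim 2H$; so its coefficient of $H$ is $2$.

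For step (b) the multiplicities are read off from the tangent cones. By Proposition \ref{cones_col} the tangent cone of $H_i^3$ at a point of $S_k(\mathcal{V}_2^5)\setminus S_{k-1}(\mathcal{V}_2^5)$ for $k<i$ is a cone over $H_{i-k}^{3-k}$, which is cut out by an $(i-k)\times(i-k)$ determinant and so has multiplicity $i-k$; for $k\ge i$ the block determinant does not vanish on a general rank-$k$ matrix, so the multiplicity is $0$. This yields $\mult_{S_1}(H_2^3)=1$, $\mult_{S_2}(H_2^3)=0$, $\mult_{S_1}(H_3^3)=2$, $\mult_{S_2}(H_3^3)=1$, and both multiplicities $0$ for $H_1^3$. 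Similarly, by Proposition \ref{symplcones} the tangent cone of $S_3(\mathcal{V}_2^5)=S_r(\mathcal{V}_2^{2r-1})$ at a point of $S_k\setminus S_{k-1}$ for $k<r=3$ is a cone over $S_{3-k}(\mathcal{V}_2^{2(3-k)-1})=S_{r-k}(\mathcal{V}_2^{2(r-k)-1})$, a divisor of class $2H$ in $X_{2(r-k)}$ by Proposition \ref{mult} and hence of multiplicity $2$; thus $\mult_{S_1}(S_3)=\mult_{S_2}(S_3)=2$. Here one must note that the coefficient of $E_2$ is the multiplicity of the strict transform after $f_1$ along the second center, but since $f_1$ is an isomorphism near a general (rank exactly $2$) point of $S_2(\mathcal{V}_2^5)$ this equals the multiplicity of the original divisor along $S_2\setminus S_1$ computed above. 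Assembling (a) and (b) through Remark \ref{stpb} gives exactly $D_1\sim H$, $D_2\sim 2H-E_1$, $D_3\sim 3H-2E_1-E_2$ and $S_3^2(\mathcal{V}_2^5)\sim 2H-2E_1-2E_2$.

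The main obstacle is that, unlike the case $r=2$ where $X_4\cong\mathbb{G}(1,4)$ is smooth, the variety $X_6$ is singular along $\mathcal{V}_2^5$ by Proposition \ref{symplcones}, so Remark \ref{stpb} does not apply verbatim at the first blow-up. I would resolve this in one of two equivalent ways. Either argue directly that, because the centers have smooth strict transforms and $\mathcal{S}_6$ itself is smooth by Theorem \ref{main1}, the coefficient of each $E_k$ in the class of a strict transform equals the order of vanishing of the divisor along the (reduced) center, that is the degree of the projectivized tangent cone computed in Propositions \ref{cones_col} and \ref{symplcones}; or embed $\mathcal{S}_6\hookrightarrow\mathcal{Q}(5)_2$ into the smooth space of complete quadrics as in Proposition \ref{eff_nef}, where Remark \ref{stpb} does apply, and observe that the multiplicities in $\mathbb{P}^N$ of the determinantal hypersurfaces along the rank loci coincide with the $X_6$-multiplicities computed above, so that restricting the resulting classes to $\mathcal{S}_6$ yields the four relations.
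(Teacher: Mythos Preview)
Your argument is correct and reaches the right relations, but it differs from the paper's route at the crucial step of computing the $E_1$-coefficients. The paper explicitly declines to use Remark \ref{stpb} for the first blow-up (precisely because $X_6$ is singular along $\mathcal{V}_2^5$) and instead determines each coefficient by intersecting with test curves: a curve $\widetilde{L}\subset X_6^{(1)}$ lying over an explicit line $L\subset X_6$ through $p=[1:0:\cdots:0]$, for which $\widetilde L\cdot E_1=1$ is checked directly from the tangent cone description, and the projection formula then gives the $E_1$-coefficients of $D_2,D_3$; for $S_3$ the paper uses instead a general line $R$ in a fiber $X_4\cong\mathbb G(1,4)$ of $E_1$, computes $R\cdot E_1=-1$ from the already established relation $D_2\sim 2H-E_1$, and reads off $R\cdot S_3=2$ from the fact (Proposition \ref{symplcones}) that the tangent cone of $S_3$ at a rank-one point is a quadric section of $X_4$. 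Only the $E_2$-coefficients are then obtained via Remark \ref{stpb}, which is legitimate since $X_6^{(1)}$ is smooth.

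Your approach via multiplicities is more uniform and avoids the ad hoc choice of test curves, but your two proposed fixes deserve comment. The first fix is correct in substance: since $E_1$ is a smooth prime divisor on the smooth variety $X_6^{(1)}$, the coefficient of $E_1$ in $f_1^*D$ is the valuation $v_{E_1}(g)$ of a local equation $g$ of the Cartier divisor $D$, and in the local cone model this equals the degree of the leading form of $g$, i.e.\ the degree of the projectivized tangent cone of $D$ as a hypersurface section of $X_4$; this is exactly what Propositions \ref{symplcones} and \ref{cones_col} compute. Spelling this out would make your argument fully rigorous. The second fix works cleanly for $D_1,D_2,D_3$, which are restrictions of hypersurfaces in $\mathbb P^N$, but does not directly handle $S_3(\mathcal V_2^5)$: this divisor is not cut out on $X_6$ by a hypersurface of $\mathbb P^N$ (rather $\sec_5(\mathcal V_2^5)$ restricts to $3\cdot S_3$, by Proposition \ref{mult}), so to run the embedding argument for $S_3$ you would have to compute the $\mathbb P^N$-multiplicities of $\sec_5(\mathcal V_2^5)$ along the rank-one and rank-two loci and divide through by $3$, or simply revert to your first fix for that case.
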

\begin{proof}
Recall, that the first blow-up $f_1:X_6^{(1)}\rightarrow X_6$ in Construction \ref{ccssf} is the blow-up of $X_6$ along the Veronese variety $\mathcal{V}_2^5$ which by Proposition \ref{symplcones} is the singular locus of $X_6$. Hence, in this case we can not use Remark \ref{stpb} to compute the discrepancies of the relevant divisors with respect to $E_1$. In order to do this we consider the line $L = \{z_{1,1}-z_{0,1}=z_{1,1}-z_{2,2}=z_{0,2}=z_{0,3}=z_{0,4}=z_{0,5}=z_{1,2}=z_{1,3}=z_{1,4}=z_{1,5}=z_{2,3}=z_{2,4}=z_{2,5}=z_{3,3}=z_{3,4}=z_{3,5}=z_{4,4}=z_{4,5}=z_{5,5} = 0\}$ and let $\widetilde{L}$ be its strict transform in $X_6^{(1)}$. Slightly abusing the notation we will denote by $D_i$ also the strict transform in $X_6^{(1)}$ of the divisor $H_i^3$ in Proposition \ref{cones_col} for $i = 1,2,3$ and by $H$ the pull-back of the hyperplane section to $X_6^{(1)}$. Clearly, $D_1\sim H$.

Now, let us write $D_2\sim 2H-aE_1$. Note that the line $L$ intersects $\mathcal{V}_2^5$ just at the point $p = [1:0\dots:0]$, and by Remark \ref{equations} and Proposition \ref{symplcones} $L\subset X_6$. By Proposition \ref{symplcones} the tangent cone of $X_6$ at $p$ is a cone over $X_4\cong\mathbb{G}(1,4)$ with $5$-dimensional vertex and $\widetilde{L}$ intersects $E_1$ just at the point $q = [1:0:0:0:1:0:\dots :0]$ of $X_4$. Hence $\widetilde{L}\cdot E_1 = 1$. The divisor $H_2^3$ intersects $L$ in $p$ and in another point not lying on $\mathcal{V}_2^5$. Moreover, by Proposition \ref{cones_col} the tangent cone of $H_2^3$ at $p$ is a hyperplane section of $X_4$ not passing through $q$. Then $\widetilde{L}\cdot D_2 = 1$. By the projection formula we have
$$1 = \widetilde{L}\cdot D_2 = 2\widetilde{L}\cdot H-a\widetilde{L}\cdot E_1 = 2L\cdot H_1^3-a = 2-a$$
and hence $a = 1$. So we may write $D_2\sim 2H-E_1$.

Now, write $D_3\sim 3H-bE_1$. The divisor $H_3^3$ intersects $L$ in $p$ with multiplicity two and in another point not lying on $\mathcal{V}_2^5$. By Proposition \ref{cones_col} the tangent cone of $H_3^3$ at $p$ is a quadratic section of $X_4$ not passing through $q$. Hence
$$1 = \widetilde{L}\cdot D_3 = 3\widetilde{L}\cdot H-a\widetilde{L}\cdot E_1 = 3L\cdot H_1^3-a = 3-a$$
and $a = 2$. Then $D_3\sim 3H-2E_1$.

We will denote by $S_3$ the strict transform of $S_3(\mathcal{V}_2^5)$ in $X_6^{(1)}$. Let $R\subset X_4\cong\mathbb{G}(1,4)$ be a general line. Note that $R$ is contracted by the blow-down morphism and hence 
$$1 = R\cdot D_2 = 2R\cdot H-R\cdot E_1 = -R\cdot E_1$$
yields $R\cdot E_1 = -1$. By Proposition \ref{mult} we may write $S_3\sim 2H-cE_1$ and since by Proposition \ref{symplcones} the tangent cone of $S_3(\mathcal{V}_2^5)$ at a point of $\mathcal{V}_2^5$ is a quadratic section of $X_4$ we have $R\cdot S_3 = 2$. This yields 
$$2 = R\cdot S_3 = 2R\cdot H-cR\cdot E_1 = -cR\cdot E_1 = c$$
and $S_3\sim 2H-2E_1$. 

Now, by Proposition \ref{symplcones} the morphism $f_2:\mathcal{S}_6\rightarrow X_6^{(1)}$ in Construction \ref{ccssf} is the blow-up of a smooth variety along a smooth subvariety. So we can apply Remark \ref{stpb} in order to compute the discrepancies of the divisors with respect to $E_2$. Finally, again by Proposition \ref{symplcones} we get the claim. 
\end{proof}

\begin{thm}\label{dec_S6}
The Cox ring of $\mathcal{S}_6$ is generated by the sections of $D_1,D_2,D_3,E_1,E_2,S_3^{(2)}(\mathcal{V}_2^5)$. The Mori chamber decomposition of the effective cone of $\mathcal{S}_6$ has nine chambers as displayed in the following $2$-dimensional section of $\Eff(\mathcal{S}_6)$:
$$
\definecolor{uuuuuu}{rgb}{0.26666666666666666,0.26666666666666666,0.26666666666666666}
\begin{tikzpicture}[xscale=5.5,yscale=4.65][line cap=round,line join=round,>=triangle 45,x=5.0cm,y=5.0cm]
\clip(-1.4,-0.03) rectangle (1.2,1.06);
\fill[line width=0.4pt,fill=black,fill opacity=0.10000000149011612] (-0.1111111111111111,0.4444444444444444) -- (0.,0.5) -- (0.2,0.4) -- cycle;
\draw [line width=0.1pt] (-1.,0.)-- (0.,1.);
\draw [line width=0.1pt] (1.,0.)-- (0.,1.);
\draw [line width=0.1pt] (-1.,0.)-- (1.,0.);
\draw [line width=0.1pt] (-0.1111111111111111,0.4444444444444444)-- (0.,0.5);
\draw [line width=0.1pt] (0.,0.5)-- (0.2,0.4);
\draw [line width=0.1pt] (0.2,0.4)-- (-0.1111111111111111,0.4444444444444444);
\draw [line width=0.1pt] (0.,0.5)-- (0.,1.);
\draw [line width=0.1pt] (0.2,0.4)-- (1.,0.);
\draw [line width=0.1pt] (-0.1111111111111111,0.4444444444444444)-- (-1.,0.);
\draw [line width=0.1pt] (-0.1111111111111111,0.4444444444444444)-- (1.,0.);
\draw [line width=0.1pt] (0.2,0.4)-- (-1.,0.);
\draw [line width=0.1pt] (-0.1111111111111111,0.4444444444444444)-- (0.,1.);
\draw [line width=0.1pt] (0.,1.)-- (0.2,0.4);
\begin{scriptsize}
\draw [fill=black] (-1.,0.) circle (0.1pt);
\draw[color=black] (-1.107,0.04) node {$S_3^{(2)}(\mathcal{V}_2^5)$};
\draw [fill=black] (0.,1.) circle (0.1pt);
\draw[color=black] (0.016094824612243562,1.03) node {$E_2$};
\draw [fill=black] (1.,0.) circle (0.1pt);
\draw[color=black] (1.035,0.021295712735575137) node {$E_1$};
\draw [fill=black] (-0.1111111111111111,0.4444444444444444) circle (0.0pt);
\draw[color=black] (-0.16,0.46729156695787455) node {$D_3$};
\draw [fill=black] (0.,0.5) circle (0.1pt);
\draw[color=black] (0.05,0.5217643430460943) node {$D_2$};
\draw [fill=black] (0.2,0.4) circle (0.0pt);
\draw[color=black] (0.25,0.419) node {$D_1$};
\draw [fill=uuuuuu] (0.09090909090909086,0.36363636363636365) circle (0.0pt);
\draw[color=uuuuuu] (0.102,0.319) node {$P$};
\end{scriptsize}
\end{tikzpicture}
$$
where $P\sim 3H-E_1-E_2$ and $\Mov(\mathcal{S}_6)$ is generated by $D_1,D_2,D_2$ and $P$.
\end{thm}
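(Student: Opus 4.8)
The plan is to produce an upper bound on the number of chambers via a toric (GKZ) refinement, and then to show that this refinement is not strict by pinning down the nef, effective and movable cones together with the unique flip. The statement on the Cox ring is immediate from Corollary \ref{Cox_gen}. Using Lemma \ref{rel_S6} I would write the six generators in the basis $H,E_1,E_2$ of $\Pic(\mathcal{S}_6)$ as $D_1=H$, $D_2=2H-E_1$, $D_3=3H-2E_1-E_2$, $E_1$, $E_2$, $S_3^{(2)}(\mathcal{V}_2^5)=2H-2E_1-2E_2$. By Remarks \ref{toric} and \ref{gen_cox} the Mori chamber decomposition of $\Eff(\mathcal{S}_6)$ is a coarsening of the GKZ decomposition of the toric variety $\mathcal{T}$ whose Cox ring is the polynomial ring on the Cox generators of $\mathcal{S}_6$; since the GKZ chambers depend only on the set of degree rays, the extra sections attached to each color $D_i$ in Remark \ref{gen_cox} add no walls, and only the six classes above enter the computation.

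Next I would compute this GKZ decomposition on a two–dimensional section of the cone $\Eff(\mathcal{S}_6)\subset N^1(\mathcal{S}_6)$. Its walls are the traces of the two–dimensional cones spanned by pairs of the six rays. The relations $2D_1=D_2+E_1$ and $2D_3=2D_2+S_3^{(2)}(\mathcal{V}_2^5)$ place $D_1$ on $\Cone(D_2,E_1)$ and $D_3$ on $\Cone(D_2,S_3^{(2)}(\mathcal{V}_2^5))$, so these two pairs contribute no new wall, while the class $P=3H-E_1-E_2$ is the unique interior intersection $\Cone(D_1,S_3^{(2)}(\mathcal{V}_2^5))\cap\Cone(D_3,E_1)$, reflecting $P=D_3+E_1=2D_1+\tfrac12 S_3^{(2)}(\mathcal{V}_2^5)$. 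The resulting planar arrangement has the seven vertices $E_1,E_2,S_3^{(2)}(\mathcal{V}_2^5),D_1,D_2,D_3,P$ and fifteen edges, so Euler's formula gives $15-7+2=10$ faces, that is exactly nine bounded faces, namely the nine cones drawn in the figure. Hence the Mori chamber decomposition has at most nine chambers.

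It then remains to show that no two chambers are Mori equivalent. By Proposition \ref{eff_nef}, $\Nef(\mathcal{S}_6)=\langle D_1,D_2,D_3\rangle$ and $\Eff(\mathcal{S}_6)=\langle E_1,E_2,S_3^{(2)}(\mathcal{V}_2^5)\rangle$, which pins the central chamber and the three extremal effective rays. I would then prove $\Mov(\mathcal{S}_6)=\langle D_1,D_2,D_3,P\rangle$: the $D_i$ are nef, hence movable; the boundary divisors $E_1,E_2,S_3^{(2)}(\mathcal{V}_2^5)$ are rigid, so any class lying outside this quadrilateral has one of them as a fixed component and fails to be movable, giving $\Mov(\mathcal{S}_6)\subseteq\langle D_1,D_2,D_3,P\rangle$, while the reverse inclusion amounts to the movability of $P$. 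The only GKZ wall interior to $\Mov(\mathcal{S}_6)$ is $\langle D_1,D_3\rangle$, and because $\Nef(\mathcal{S}_6)\subsetneq\Mov(\mathcal{S}_6)$ the corresponding small modification — the flip with nef cone $\langle D_1,D_3,P\rangle$ — is nontrivial, so the two movable chambers are genuinely distinct. The seven remaining chambers are then separated by their stable base loci: each meets $\overline{\Mov(\mathcal{S}_6)}$ in a distinct face and carries a distinct fixed part among $\{E_1,E_2,S_3^{(2)}(\mathcal{V}_2^5)\}$, the only coincidence being that $\langle D_1,D_2,E_2\rangle$ and $\langle D_2,D_3,E_2\rangle$ share the fixed divisor $E_2$ but meet $\overline{\Mov(\mathcal{S}_6)}$ in the different facets $\langle D_1,D_2\rangle$ and $\langle D_2,D_3\rangle$. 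No coarsening is therefore possible, which yields exactly nine chambers and $\Mov(\mathcal{S}_6)=\langle D_1,D_2,D_3,P\rangle$.

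The main obstacle is this exactness step, and inside it the movability of $P=3H-E_1-E_2$, equivalently the existence of the flipping contraction across $\langle D_1,D_3\rangle$. I expect to settle it either by analyzing directly the linear system of cubic sections of $X_6$ singular along $\mathcal{V}_2^5$ and along the strict transform of $S_2(\mathcal{V}_2^5)$, using the tangent–cone descriptions of Proposition \ref{symplcones} and the discrepancy bookkeeping of Remark \ref{stpb} to show that it has no fixed divisorial component, or by computing the movable cone $\Mov(\mathcal{T})=\langle D_1,D_2,D_3,P\rangle$ of the ambient toric variety as the intersection of the cones obtained by discarding each of the one–dimensional boundary generators $E_1,E_2,S_3^{(2)}(\mathcal{V}_2^5)$, and transporting movability back to $\mathcal{S}_6$.
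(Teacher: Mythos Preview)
Your approach is essentially the same as the paper's: obtain the nine-chamber GKZ picture from the six Cox generators via Remarks \ref{toric} and \ref{gen_cox}, then argue no coarsening occurs by pinning down the nef cone (Proposition \ref{eff_nef}) and separating the remaining chambers by their divisorial fixed parts. The differences are minor. First, what you flag as the main obstacle, the movability of $P$ and the determination of $\Mov(\mathcal{S}_6)=\langle D_1,D_2,D_3,P\rangle$, is dispatched in the paper by a single citation to \cite[Proposition 3.3.2.3]{ADHL15}: the movable cone of a Mori dream space is the intersection, over the Cox generators $f_i$, of the cones spanned by the remaining degrees; since each color has a multi-dimensional space of sections, only the three boundary divisors $E_1,E_2,S_3^{(2)}(\mathcal{V}_2^5)$ contribute nontrivial intersections, and one reads off the quadrilateral directly. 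This is precisely your second proposed route via $\Mov(\mathcal{T})$, so no further work is needed. Second, to split the $E_2$-region into the two chambers $\langle D_1,D_2,E_2\rangle$ and $\langle D_2,D_3,E_2\rangle$, the paper simply observes that the stable base locus on both is $E_2$ but the union is not convex (since $D_2$ lies in the interior of the triangle $\langle D_1,D_3,E_2\rangle$), and Mori chambers are convex; your ``different facets of $\overline{\Mov}$'' argument reaches the same conclusion but the convexity observation is shorter.
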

\begin{proof}
The computation of the movable cone follows from \cite[Proposition 3.3.2.3]{ADHL15}, Proposition \ref{bound_col} and Remark \ref{gen_cox}, and the statement on the generators of $\Cox(\mathcal{S}_6)$ follows from Corollary \ref{Cox_gen}.

Furthermore, by Lemma \ref{rel_S6}, Proposition \ref{bound_col} and Remarks \ref{toric}, \ref{gen_cox} the Mori chamber decomposition of $\Eff(\mathcal{S}_6)$ is a, possibly trivial, coarsening of the decomposition in the statement. 

Note that the stable base loci of a divisor in the interior of chamber delimited by $S_3^{(2)}(\mathcal{V}_2^5),P,E_1$; $S_3^2(\mathcal{V}_2^5),P,D_3$; $S_3^{(2)}(\mathcal{V}_2^5),D_3,E_2$; $D_2,D_3,D_1,E_2$; $E_1,D_1,E_2$,; $P,D_1,E_1$ are respectively given by $S_3^{(2)}(\mathcal{V}_2^5)\cup E_1$; $S_3^2(\mathcal{V}_2^5)$; $E_2\cup S_3^{(2)}(\mathcal{V}_2^5)$; $E_2$; $E_1\cup E_2$; $E_1$. Furthermore, since Mori chambers are convex the stable base locus chamber delimited by $D_2,D_3,D_1,E_2$ must be divided in two Mori chambers by the wall joining $D_2$ and $E_2$. Hence the decomposition in the statement gives the Mori chamber decomposition of $\Eff(\mathcal{S}_6)$ outside of the movable cone. 

Finally, note that the only modifications we could perform inside the movable cone are removing the wall joining $D_1$ and $D_3$ and adding a wall joining $D_2$ and $P$. However, both these modifications are not allowed since by Proposition \ref{eff_nef} the chamber delimited by $D_1,D_2,D_3$ is the nef cone of $\mathcal{S}_6$. 
\end{proof}

\section{Moduli spaces of conics in Lagrangian Grassmannians}\label{K_Grass}
An $n$-pointed rational pre-stable curve $(C,(x_{1},...,x_{n}))$ is a projective, connected, reduced rational curve with at most nodal singularities of arithmetic genus zero, with $n$ distinct and smooth marked points $x_1,...,x_n\in C$. We will refer to the marked and the singular points of $C$ as special points.

Let $X$ be a homogeneous variety. A map $(C,(x_{1},...,x_{n}),\alpha)$, where $\alpha:C\rightarrow X$ is a morphism from an $n$-pointed rational pre-stable curve to $X$, is stable if any component $E\cong\mathbb{P}^{1}$ of $C$ contracted by $\alpha$ contains at least three special points.

Now, let us fix a class $\beta\in H_2(X,\mathbb{Z})$. By \cite[Theorem 2]{FP} there exists a smooth, proper, and separated Deligne-Mumford stack $\overline{\mathcal{M}}_{0,n}(X,\beta)$ parametrizing isomorphism classes of stable maps $[C,(x_{1},...,x_{n}),\alpha]$ such that $\alpha_{*}[C] = \beta$. Furthermore, by \cite[Corollary 1]{KP} the coarse moduli space $\overline{M}_{0,n}(X,\beta)$ associated to the stack $\overline{\mathcal{M}}_{0,n}(X,\beta)$ is a normal, irreducible, projective variety with at most finite quotient singularities of dimension
$$
\dim(\overline{M}_{0,n}(X,\beta)) = \dim(X)+\beta\cdot c_1(T_X)+n-3.
$$
The variety $\overline{M}_{0,n}(X,\beta)$ is called the \textit{moduli space of stable maps}, or the \textit{Kontsevich moduli space} of stable maps of class $\beta$ from a rational pre-stable $n$-pointed curve to $X$. 

\subsubsection*{Kontsevich spaces of conics in Grassmannians}
We will denote by $\overline{M}_{0,0}(\mathbb{G}(k,n),2)$ the moduli space of degree two stable maps to the Grassmannian $\mathbb{G}(k,n)$ parametrizing $k$-planes in $\mathbb{P}^n$ embedded via the Pl\"ucker embedding. Now, following \cite[Section 2]{CC10} we are going to describe divisor classes on $\overline{M}_{0,0}(\mathbb{G}(k,n),2)$.  Fix projective subspaces $\Pi^{n-k},\Pi^{n-k-2}\subset\mathbb{P}^n$ of dimension $n-k$ and $n-k-2$, and consider the Schubert cycles
$$
\begin{array}{lll}
\sigma_{1,1}^{k,n} & = & \{W\in\mathbb{G}(k,n)\: | \: \dim(W\cap\Pi^{n-k})\geq 1\};\\ 
\sigma_{2}^{k,n} & = & \{W\in\mathbb{G}(k,n)\: | \: \dim(W\cap\Pi^{n-k-2})\geq 0\}.
\end{array} 
$$
Let $\pi:\overline{M}_{0,1}(\mathbb{G}(k,n),2)\rightarrow\overline{M}_{0,0}(\mathbb{G}(k,n),2)$ be the forgetful morphism and $ev:\overline{M}_{0,1}(\mathbb{G}(k,n),2)\rightarrow\mathbb{G}(k,n)$ the evaluation morphism. We define
$$H_{\sigma_{1,1}}^{k,n} = \pi_{*}ev^{*}\sigma_{1,1}, \: H_{\sigma_2}^{k,n} = \pi_{*}ev^{*}\sigma_2.$$
Furthermore, we will denote by $T^{k,n}$ the class of the divisor of conics that are tangent to a fixed hyperplane section of $\mathbb{G}(k,n)$.

Let $D_{deg}^{k,n}$ be the class of the divisor of maps $[C,\alpha]\in \overline{M}_{0,0}(\mathbb{G}(k,n),2)$ such that the projection of the span of the linear spaces parametrized by $\alpha(C)$ from a fixed subspace of dimension $n-k-2$ has dimension less than $k+2$.

Next we define the divisor class $D_{unb}^{k,n}$. A stable map $\alpha:\mathbb{P}^1\rightarrow\mathbb{G}(k,n)$ induces  a rank $k+1$ subbundle $\mathcal{E}_{\alpha}\subset \mathcal{O}_{\mathbb{P}^1}\otimes K^{n+1}$. If $k = 1$ we define $D_{unb}^{k,n}$ as the closure of the locus of maps $[\mathbb{P}^1,\alpha]\in \overline{M}_{0,0}(\mathbb{G}(k,n),2)$ such that $\mathcal{E}_{\alpha}\neq \mathcal{O}_{\mathbb{P}^1}(-1)^{\oplus 2}$. If $k\geq 2$ there is a trivial subbundle $\mathcal{O}_{\mathbb{P}^1}^{\oplus k-1}\subset\mathcal{E}_{\alpha}$ which induces a $(k-2)$-dimensional subspace $H_{\alpha}\subset\mathbb{P}^n$. In this way we get a map
$$
\begin{array}{lclc}
\xi: & \overline{M}_{0,0}(\mathbb{G}(k,n),2) & \dashrightarrow & \mathbb{G}(k-2,n)\\ 
 & [\mathbb{P}^1,\alpha] & \mapsto & H_{\alpha}
\end{array} 
$$
We define $D_{unb}^{k,n} = \xi^{*}\mathcal{O}_{\mathbb{G}(k-2,n)}(1)$ that is $D_{unb}^{k,n}$ is the closure of the locus of maps $[\mathbb{P}^1,\alpha]\in \overline{M}_{0,0}(\mathbb{G}(k,n),2)$ such that $H_{\alpha}$ intersects a fixed $(n-k+1)$-dimensional subspace of $\mathbb{P}^n$.

Finally, we denote by $\Delta^{k,n}$ the boundary divisor parametrizing stable maps with reducible domain. 

The connection between $\overline{M}_{0,0}(\mathbb{G}(1,3),2)$ and the space of complete quadrics $\mathcal{Q}(3)$ is due to \cite[Lemma 21]{Ce15} which states that there is a finite morphism of degree two
\stepcounter{thm}
\begin{equation}\label{2to1}
\phi:\overline{M}_{0,0}(\mathbb{G}(1,3),2)\rightarrow\mathcal{Q}(3)
\end{equation}
which maps a smooth conic $C\subset \mathbb{G}(1,3)$ to the quadric surface $\bigcup_{[L]\in C}L\subset\mathbb{P}^3$.

\subsubsection*{Kontsevich spaces of conics in Lagrangian Grassmannians}\label{K_Lag}
The Lagrangian Grassmannian $LG(r,2r)\subset\mathbb{G}(r-1,2r-1)$ parametrizes $r$-dimensional subspaces of $K^{2r}$ which are isotropic with respect to the standard symplectic form $\Omega$ in (\ref{symform}). By \cite[Section 2.1]{Te05} $LG(r,2r)$ is an irreducible variety of dimension $\frac{r(r+1)}{2}$ and of Picard rank one. Moreover, the restriction of the Pl\"ucker embedding of $\mathbb{G}(r-1,2r-1)$ yields the minimal homogeneous embedding of $LG(r,2r)$.

In this section we will study the moduli space $\overline{M}_{0,0}(LG(r,2r),2)$ parametrizing conics in $LG(r,2r)$. Let $\mathcal{E}$ be the universal quotient bundle on $\mathbb{G}(r-1,2r-1)$. The Lagrangian Grassmannian $LG(r,2r)\subset\mathbb{G}(r-1,2r-1)$ is the zero locus of a section of $\bigwedge^2\mathcal{E}$ which has first Chern class $(r-1)c_1(\mathcal{O}_{\mathbb{G}(r-1,2r-1)}(1))$. Hence the canonical bundle of $LG(r,2r)$ is given by $\omega_{LG(r,2r)}\cong \mathcal{O}_{LG(r,2r)}(-r-1)$, and $\dim(\overline{M}_{0,0}(LG(r,2r),2)) = \frac{r^2+5r-2}{2}$.

\begin{Remark}\label{comlg}
We recall some facts about the cohomology of $LG(r,2r)$. For details we refer to \cite[Section 3]{BKT03}. Consider a flag $F^1 \subset F^2 \subset \dots \subset F^r \subset K^{2r}$, where $F^j$ are isotropic subspaces of $K^{2r}$ of dimension $j$. Let $\mathcal{D}_r$ be the set of strict partitions $\lambda=(\lambda_1, \dots, \lambda_l)$ with $0 < \lambda_l < \dots < \lambda_1 \le r$ and denote by $\left|\lambda \right|= \lambda_1 + \dots + \lambda_l$ the weight of $\lambda$. For each $\lambda \in \mathcal{D}_r$ there is a codimension $\left|\lambda \right|$ Schubert variety $\Sigma^r_\lambda \subseteq LG(r,2r)$ defined by 
$$\Sigma^r_\lambda:=\{W \in LG(r,2r),\, \dim(W \cap F^{r+1-\lambda_i}) \ge i,\; i=1, \dots,l\}.$$
The class of the Schubert variety $\Sigma^r_\lambda$ in the cohomology ring $H^*(LG(r,2r),\mathbb{Z})$ will be denoted by $\sigma^r_\lambda$. We have that
$$H^*(LG(r,2r),\mathbb{Z})= \bigoplus_{\lambda \in \mathcal{D}_r}\mathbb{Z}\cdot \sigma^r_\lambda$$
with the following relations:
\stepcounter{thm}
\begin{equation}\label{relcomlg}
(\sigma_i^{r})^{2}+2\sum_{k=1}^{r-i}(-1)^{k}\sigma^r_{i+k}\sigma^r_{i-k}=0
\end{equation}
where by convention $\sigma^r_0=1$ and $\sigma^r_i=0$ for $i < 0$.
\end{Remark}

Now, we define divisor classes on $\overline{M}_{0,0}(LG(r,2r),2)$. We denote by $\Delta^r$, the boundary divisor parametrizing stable maps with reducible domain, this is the restriction to $\mml$ of the divisor $\Delta^{r-1,2r-1}$ on $\overline{M}_{0,0}(\mathbb{G}(r-1,2r-1),2)$.

Fix an isotropic subspace $F^{r-1}$ of dimension $r-1$, and consider the divisor $H_{\sigma_2}^r = \pi_{*}ev^{*}\sigma^r_2$, where $\pi: \overline{M}_{0,1}(LG(r,2r),2) \rightarrow \mml$ is the forgetful morphism, $ev: \overline{M}_{0,1}(LG(r,2r),2) \rightarrow LG(r,2r)$ is the evaluation morphism, and $\sigma_2^r$ is the Schubert cycle corresponding to the Schubert variety
$$\Sigma^r_2:=\{W \in LG(r,2r), \dim(W \cap F^{r-1}) \ge 1\}.$$ 
By Remark \ref{comlg}, in $LG(r,2r)$ the only Schubert cycle of codimension two is $\sigma^r_2$, so by \cite[Theorem 1]{Op05} we get that $\Delta^r$ and $H_{\sigma_2}^r$ generate the Picard group of $\mml$. Furthermore, we have that both the divisors $H^{r-1,2r-1}_{\sigma_{1,1}}$ and $H^{r-1,2r-1}_{\sigma_2}$ of $\overline{M}_{0,0}(\mathbb{G}(r-1,2r-1),2)$ restrict to $H_{\sigma_2}^r$ on $\mml$. Then, also $D_{deg}^{r-1,2r-1}$ and $D_{unb}^{r-1,2r-1}$ restrict to the same divisor $D_{unb}^r$ on $\mml$.

Finally, we will denote by $T^r$ the restriction of the divisor $T^{r-1,2r-1}$ to $\mml$, this is the class of the divisor of conics that are tangent to a fixed hyperplane section of $LG(r,2r)$.

\begin{Proposition}\label{propemblg}
Consider the subspaces $H=\{x_2= \dots = x_{r-1}=x_{r+2}=\dots=x_{2r-1} =0\}$ and $\Pi^{r-3}=\{x_0 = \dots = x_{r+1}=0\}$ in $\mathbb{P}^{2r-1}$. There is an embedding
$$
\begin{array}{ccll}
i:& LG(2,H) & \hookrightarrow & LG(r,2r) \\ 
  & L & \mapsto & \langle L , \Pi^{r-3} \rangle 
\end{array} 
$$
which induces an embedding $j:\mmle \rightarrow \mml$. Moreover, the pull-back map $j^{*}:\Pic(\mml)\rightarrow\Pic(\mmle)$ is an isomorphism.
\end{Proposition}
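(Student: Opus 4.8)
The plan is to first check that the linear map $i$ is a well-defined closed embedding compatible with the Plücker polarizations, then to invoke functoriality of Kontsevich spaces to produce $j$, and finally to identify $j^{*}$ on an explicit $\mathbb{Z}$-basis of the (rank two) Picard groups.

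First I would verify that $i$ is well defined. The four vectors $e_0,e_1,e_r,e_{r+1}$ spanning $H$ form a symplectic subspace, $\Omega$ pairing $e_0$ with $e_r$ and $e_1$ with $e_{r+1}$, so $LG(2,H)\cong LG(2,4)$ makes sense; the vectors $e_{r+2},\dots,e_{2r-1}$ spanning $\Pi^{r-3}$ are isotropic and $\Omega$-orthogonal to $H$. Hence for $L\in LG(2,H)$ the span $\langle L,\Pi^{r-3}\rangle$ is an $r$-dimensional isotropic subspace, i.e. a point of $LG(r,2r)$, and $L=\langle L,\Pi^{r-3}\rangle\cap H$ is recovered, so $i$ is injective. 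On Plücker coordinates $\wedge^r\langle L,\Pi^{r-3}\rangle=(\wedge^2L)\wedge(\wedge^{r-2}\Pi^{r-3})$ is the image of $\wedge^2L$ under a fixed linear map, so $i$ is the restriction of a linear embedding of Grassmannians; in particular it is a closed embedding with $i^{*}\mathcal{O}_{LG(r,2r)}(1)=\mathcal{O}_{LG(2,4)}(1)$, so $i$ preserves the line class. Since $i$ is a closed embedding preserving the polarization, composing with $i$ sends a degree two stable map to $LG(2,4)$ to a degree two stable map to $LG(r,2r)$ and preserves stability (a component contracted by $i\circ\alpha$ is contracted by $\alpha$). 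By functoriality of Kontsevich spaces under closed immersions this gives the closed embedding $j$, the universal curves are compatible, and the square
$$
\begin{array}{ccc}
\overline{M}_{0,1}(LG(2,4),2) & \longrightarrow & \overline{M}_{0,1}(LG(r,2r),2)\\
\big\downarrow & & \big\downarrow\\
\mmle & \longrightarrow & \mml
\end{array}
$$
is Cartesian, with horizontal maps $\widetilde{j},j$, vertical maps $\pi$, and $ev\circ\widetilde{j}=i\circ ev$.

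For the final assertion I would compute $j^{*}$ on the generators $\Delta^r, H^r_{\sigma_2}$ of $\Pic(\mml)$. Since the domain curve of $i\circ\alpha$ equals that of $\alpha$, the locus of reducible domains pulls back to the locus of reducible domains with multiplicity one (the node-smoothing coordinate is literally the same by the Cartesian square), so $j^{*}\Delta^r=\Delta^2$. For $H^r_{\sigma_2}=\pi_{*}ev^{*}\sigma^r_2$, I first observe that $i^{*}\sigma^r_2=\sigma^2_2$: indeed $i^{*}\sigma^r_1=\sigma^2_1$ since $i^{*}\mathcal{O}(1)=\mathcal{O}(1)$, while relation (\ref{relcomlg}) with $i=1$ gives $(\sigma^r_1)^2=2\sigma^r_2$ in both Lagrangian Grassmannians, whence $i^{*}\sigma^r_2=\tfrac12(i^{*}\sigma^r_1)^2=\tfrac12(\sigma^2_1)^2=\sigma^2_2$. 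Flat base change along the Cartesian square, together with $ev\circ\widetilde{j}=i\circ ev$, then yields
$$
j^{*}H^r_{\sigma_2}=j^{*}\pi_{*}ev^{*}\sigma^r_2=\pi_{*}\widetilde{j}^{*}ev^{*}\sigma^r_2=\pi_{*}ev^{*}i^{*}\sigma^r_2=\pi_{*}ev^{*}\sigma^2_2=H^2_{\sigma_2}.
$$
As both Picard groups are free of rank two with $\mathbb{Z}$-bases $\{\Delta^r,H^r_{\sigma_2}\}$ and $\{\Delta^2,H^2_{\sigma_2}\}$ (by Remark \ref{comlg} and \cite[Theorem 1]{Op05}, two generators of $\mathbb{Z}^2$ forming a basis), the map $j^{*}$ sends a basis to a basis and is an isomorphism.

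The step requiring most care is $j^{*}H^r_{\sigma_2}=H^2_{\sigma_2}$: one must justify both the cohomological restriction $i^{*}\sigma^r_2=\sigma^2_2$ (which I handle through the quadratic relation among the special Schubert classes, avoiding a direct Schubert-calculus computation) and that the evaluation--forgetful diagram is Cartesian with $\pi$ flat, so that base change for $\pi_{*}ev^{*}$ applies. The remaining points, namely that $j^{*}\Delta^r=\Delta^2$ holds with multiplicity one and that the listed divisors are genuine $\mathbb{Z}$-bases rather than mere generating sets, are comparatively routine.
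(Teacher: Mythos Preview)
Your proof is correct and follows essentially the same route as the paper's: verify $i$ is well defined (isotropy of $\Pi^{r-3}$ and disjointness from $H$), observe $j^{*}\Delta^{r}=\Delta^{2}$, compute $i^{*}\sigma_{2}^{r}=\sigma_{2}^{2}$ to get $j^{*}H_{\sigma_2}^{r}=H_{\sigma_2}^{2}$, and conclude via \cite[Theorem 1]{Op05}. Your version is simply more explicit—in particular, deriving $i^{*}\sigma_{2}^{r}=\sigma_{2}^{2}$ from the relation $(\sigma_{1}^{r})^{2}=2\sigma_{2}^{r}$ and spelling out the Cartesian diagram and base change for $\pi_{*}ev^{*}$—where the paper asserts these steps without further comment.
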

\begin{proof}
Since $\Pi^{r-3}$ is the projectivization of an isotropic subspace of $K^{2r}$, and disjoint from $H$, the map $i$ is well-defined. By \cite[Theorem 1]{Op05} the Picard group of $\mml$ is generated by $\Delta^r$ and $H_{\sigma_{2}}^r$.

Furthermore, we have that $i^*(\sigma_2^r)= \sigma_2^2$ and then $j^*(H^r_{\sigma_2^r})=H^2_{\sigma_2^2}$. Finally, since $j^*(\Delta^r)=\Delta^2$ we conclude that the pull-back map is an isomorphism.
\end{proof}

\begin{Lemma}\label{sym_symp}
Let $C_1, C_2\subset\mathbb{G}(1,3)$ be two smooth conics corresponding to the rulings $\bigcup_{[L]\in C_1}L$ and $\bigcup_{[L]\in C_2}L$ of a smooth quadric $Q\subset\mathbb{P}^3$. The following are equivalent:
\begin{itemize}
\item[(a)] $C_1$ is contained in $LG(2,4)$ but $C_2$ is not;
\item[(b)] the lines in the ruling $\bigcup_{[L]\in C_1}L$ are Lagrangian while the general line in the ruling $\bigcup_{[L]\in C_2}L$ is not;
\item[(c)] the matrix of $Q$ has a scalar multiple that is symplectic.
\end{itemize}
\end{Lemma}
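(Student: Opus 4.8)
The central object of my proof will be the endomorphism $\Phi := Z^{-1}\Omega$ of $V=K^4$, where $Z$ is the symmetric matrix of $Q$ (invertible, since $Q$ is smooth) and $\Omega$ is the symplectic form in \eqref{symform}. Writing $B_Z(x,y)=x^tZy$ and $B_\Omega(x,y)=x^t\Omega y$, the defining relation is $B_\Omega(x,y)=B_Z(x,\Phi y)$, and from $Z^t=Z$, $\Omega^t=-\Omega$ one gets $\Phi^tZ+Z\Phi=\Omega^t+\Omega=0$, i.e. $\Phi$ is skew-self-adjoint for $B_Z$. The equivalence (a)$\iff$(b) I would dispose of first, as it is essentially definitional: the points of $C_i$ are exactly the lines $L$ of the $i$-th ruling, and $LG(2,4)\subset\mathbb{G}(1,3)$ is precisely the locus of Lagrangian lines; since $C_2$ is irreducible and $LG(2,4)$ is closed, "$C_2\not\subseteq LG(2,4)$" is the same as "the general line of the second ruling is not Lagrangian."

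The heart is (a)$\iff$(c), and I would base it on two facts. \emph{Fact 1:} a line $L$ on $Q$, i.e. a $B_Z$-isotropic $2$-plane $W$, is Lagrangian if and only if $W$ is $\Phi$-invariant. Indeed $W$ is a maximal $B_Z$-isotropic subspace, so $W^{\perp_Z}=W$, and $\Omega|_W=0\iff B_Z(\Phi\,\cdot\,,\cdot)|_W=0\iff \Phi W\subseteq W^{\perp_Z}=W$. \emph{Fact 2:} condition (c), namely $Z\Omega Z=\mu\Omega$ for some $\mu\in K^*$, is equivalent to $\Phi^2=cI$ for some $c\in K^*$. Using $\Omega^2=-I$, the relation $Z\Omega Z=\mu\Omega$ rearranges to $Z^{-1}\Omega Z^{-1}=\mu^{-1}\Omega$, whence $\Phi^2=(Z^{-1}\Omega Z^{-1})\Omega=\mu^{-1}\Omega^2=-\mu^{-1}I$; reading the computation backwards gives the converse, with $c\neq 0$ forced because $\Phi$ is invertible.

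For (c)$\Rightarrow$(a): if $\Phi^2=cI$ with $c\neq 0$, then $\Phi$ is diagonalizable with eigenvalues $\pm\sqrt{c}$, and skew-self-adjointness makes the two eigenspaces $V_\pm$ each $2$-dimensional and $B_Z$-isotropic, hence lines on $Q$. The $\Phi$-invariant $B_Z$-isotropic $2$-planes are then $V_+$, $V_-$, and the planes $\ell_+\oplus\ell_-$ with $\ell_\pm\subset V_\pm$ one-dimensional and $B_Z$-orthogonal; the latter form an irreducible $\mathbb{P}^1$ inside the isotropic locus $C_1\sqcup C_2$, hence an entire ruling, which by Fact 1 is Lagrangian, while $V_+,V_-$ are not of the form $\ell_+\oplus\ell_-$ and so lie in the other ruling as its only $\Phi$-invariant members. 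This yields (a) after labelling the Lagrangian ruling $C_1$. For (a)$\Rightarrow$(c) I would pick two skew lines $W_0,W_\infty$ of the Lagrangian ruling, so that $V=W_0\oplus W_\infty$ with $Z=\left(\begin{smallmatrix}0&I\\ I&0\end{smallmatrix}\right)$ and $\Phi=\operatorname{diag}(P,S)$; the remaining ruling members are graphs of $\lambda J\colon W_0\to W_\infty$ with $J=\left(\begin{smallmatrix}0&1\\ -1&0\end{smallmatrix}\right)$, and their $\Phi$-invariance forces $JP=SJ$, while skew-self-adjointness forces $S=-P^t$. Taking traces in $JPJ^{-1}=-P^t$ gives $\operatorname{tr}P=0$, so Cayley--Hamilton yields $P^2=-\det(P)I$ and $S^2=-\det(P)I$, whence $\Phi^2=-\det(P)I$ with $\det P\neq 0$; by Fact 2 this is (c).

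The main obstacle is the direction (a)$\Rightarrow$(c): converting the geometric input "an entire one-parameter family of lines on $Q$ is Lagrangian" into the algebraic rigidity $\Phi^2\in K^*\!\cdot\! I$. The clean route is the block computation above, whose decisive point is the identity $\operatorname{tr}P=0$ obtained by combining the $\Phi$-invariance of a third ruling member with the $B_Z$-skew-self-adjointness of $\Phi$; this is exactly what collapses $\Phi^2$ to a scalar, and it also explains the asymmetry in (a), since the forward direction shows that once one ruling is Lagrangian the other automatically contains only the two lines $V_\pm$.
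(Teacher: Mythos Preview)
Your proof is correct and takes a genuinely different route from the paper's. The paper argues by $Sp(4)$-equivariance: since the conditions (a), (b), (c) are $Sp(4)$-invariant, it exhibits a single smooth quadric $Q=\{x_0^2+x_1^2-x_2^2-x_3^2=0\}$, checks by hand that $iM_Q$ is symplectic, and computes the two rulings explicitly in Pl\"ucker coordinates to see that exactly one lies in $LG(2,4)$. Your argument is instead intrinsic linear algebra: you introduce the endomorphism $\Phi=Z^{-1}\Omega$, identify Lagrangian lines on $Q$ with $\Phi$-invariant $B_Z$-isotropic $2$-planes (Fact~1), and identify (c) with $\Phi^2\in K^*\!\cdot I$ (Fact~2), then run a clean eigenspace analysis for (c)$\Rightarrow$(a) and a block computation for (a)$\Rightarrow$(c).

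What each buys: the paper's approach is short and ties directly to the group action that drives the rest of the section, but as written it only transparently proves the direction (c)$\Rightarrow$(a),(b); the converse (a)$\Rightarrow$(c) still requires knowing that a smooth quadric \emph{not} in the $Sp(4)$-orbit of the identity cannot have a ruling entirely in $LG(2,4)$, which the explicit example does not by itself show. Your argument handles both directions uniformly and without appeal to orbit structure, and the key identity $JPJ^{-1}=-P^{t}$ (from combining $\Phi$-invariance of a third ruling member with $B_Z$-skew-self-adjointness) giving $\operatorname{tr}P=0$ is exactly the missing algebraic content. Your proof also makes visible the structural fact that in case (c) the ``other'' ruling contains precisely two Lagrangian lines, namely the eigenspaces $V_\pm$ of $\Phi$.
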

\begin{proof}
The actions of $Sp(4)$ on $\overline{M}_{0,0}(LG(2,4),2)$ in (\ref{actKLG}) and on $\mathcal{S}_4$ in (\ref{acsimp}) are compatible. Therefore, it is enough to prove that the equivalence of the conditions in statement holds for a particular smooth quadric. 

Consider the quadric $Q = \{x_0^2+x_1^2-x_2^2-x_3^2 = 0\}\subset\mathbb{P}^3$. If $M_{Q}$ is the matrix of $Q$ we have $M_{Q}^t\Omega M_{Q} = -\Omega$, and hence $iM_{Q}$ is symplectic. 

Now, one of the rulings of $Q$ is given by the following lines
$$L_{s,t} = \left\langle (t,-s,-t,s), (s,t,s,t)\right\rangle$$  
with $[s:t]\in\mathbb{P}^1$. Note that $L_{s,t}$ is Lagrangian with respect to $\Omega$ for all $[s:t]\in\mathbb{P}^1$. 

Fix homogeneous coordinates $[Z_0:\dots :Z_5]$ on $\mathbb{P}^5$. The Lagrangian Grassmannian $LG(2,4)$ is cut out on the Grassmannian $\mathbb{G}(1,3)$ by the hyperplane $H = \{Z_1+Z_4 = 0\}$. Via the Pl\"ucker embedding the ruling $L_{s,t}$ corresponds to the conic given by the image of the following morphism
$$
\begin{array}{ccl}
\mathbb{P}^1 & \longrightarrow & \mathbb{G}(1,3)\\ 
(s,t) & \longmapsto & [t^2+s^2:2st:t^2-s^2:-s^2+t^2:-2st:-t^2-s^2] 
\end{array} 
$$
which therefore is contained in $H\cap\mathbb{G}(1,3) = LG(2,4)$. The other ruling of $Q$ is given by 
$$R_{u,v} = \left\langle (u,-v,u,v), (v,u,-v,u)\right\rangle$$  
with $[u:v]\in\mathbb{P}^1$. The corresponding conic is given by the image of  
$$
\begin{array}{ccl}
\mathbb{P}^1 & \longrightarrow & \mathbb{G}(1,3)\\ 
(u,v) & \longmapsto & [u^2+v^2:-2uv:u^2-v^2:v^2-u^2:-2uv,u^2+v^2] 
\end{array} 
$$
which is not contained in $H\cap\mathbb{G}(1,3) = LG(2,4)$. Hence, the general line in the ruling $R_{u,v}$ is not Lagrangian.
\end{proof}

\begin{Lemma}\label{sphKLG}
The following $Sp(4)$-action on $\overline{M}_{0,0}(LG(2,4),2)$ 
\stepcounter{thm}
\begin{equation}\label{actKLG}
\begin{array}{cll}
Sp(4) \times \overline{M}_{0,0}(LG(2,4),2) & \longrightarrow & \overline{M}_{0,0}(LG(2,4),2) \\ 
(M, [C, \alpha]) & \longmapsto & [C, \wedge^{r}M\circ\alpha] 
\end{array} 
\end{equation}
gives to $\overline{M}_{0,0}(LG(2,4),2)$ a structure of spherical variety. 
\end{Lemma}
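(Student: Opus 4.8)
The plan is to verify the two defining properties of a spherical variety for the datum $(\mmle, Sp(4), \mathscr{B}, x_0)$: normality, and the existence of a base point whose $\mathscr{B}$-orbit is dense and open. Normality (indeed irreducibility and projectivity) is already guaranteed for Kontsevich spaces over homogeneous targets by \cite{KP}, as recalled at the start of this section, so the entire content reduces to producing a dense Borel orbit. I would do this by identifying, $Sp(4)$-equivariantly, the open locus of $\mmle$ parametrising smooth conics with the open $Sp(4)$-orbit $O_4\subset X_4$, and then importing the dense Borel orbit from the wonderful — hence spherical — compactification $\mathcal{S}_4$.

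First I would make precise the assignment $[\mathbb{P}^1,\alpha]\mapsto \bigcup_{[L]\in\alpha(\mathbb{P}^1)}L$ sending a smooth conic $C=\alpha(\mathbb{P}^1)\subset LG(2,4)\subset\mathbb{G}(1,3)$ to the quadric surface it rules in $\mathbb{P}^3$. Over the open locus $U\subseteq\mmle$ of maps with smooth image this is a morphism to $\mathbb{P}^9$, since the union of the lines in the tautological family over the universal conic is a flat family of quadrics. By Lemma \ref{sym_symp} the condition $C\subset LG(2,4)$ is equivalent to the matrix of the resulting quadric admitting a symplectic scalar multiple; hence the image of $U$ lands in the open orbit $O_4\subset X_4$. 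Because every smooth symplectic quadric carries a \emph{unique} Lagrangian ruling, this assignment is bijective onto $O_4$, and I would check that it is in fact an isomorphism of varieties $U\xrightarrow{\sim}O_4$. Equivariance is exactly the compatibility of the two actions noted at the beginning of the proof of Lemma \ref{sym_symp}: the action (\ref{actKLG}) by $\wedge^2 M$ on Pl\"ucker coordinates corresponds, on the ruled quadrics, to the action (\ref{acsimp}) given by $Z\mapsto MZM^t$, which preserves $O_4$.

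With this identification in hand I would invoke Theorem \ref{main1}: $\mathcal{S}_4$ is wonderful, so by Luna's theorem it is spherical and carries a dense open $\mathscr{B}$-orbit for a Borel subgroup $\mathscr{B}\subset Sp(4)$. Since the blow-down $\mathcal{S}_4\to X_4$ is an $Sp(4)$-isomorphism over the open orbit $O_4$, and the dense $\mathscr{B}$-orbit is necessarily contained in the open $Sp(4)$-orbit, that Borel orbit lies in $O_4$ and is dense there. Transporting it through the equivariant isomorphism $U\cong O_4$ yields a dense $\mathscr{B}$-orbit inside $U$, which is dense in $\mmle$; this exhibits $\mmle$ as a spherical variety. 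A dimension count supports the picture, as $\dim\mmle=6=\dim X_4=\dim\mathscr{B}$, so a dense Borel orbit is precisely what one expects.

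The main obstacle I anticipate is the second step: promoting the set-theoretic bijection $U\to O_4$ produced by Lemma \ref{sym_symp} to a genuine $Sp(4)$-equivariant isomorphism of varieties, and in particular writing down the family of ruled quadrics over $U$ carefully enough to see that it is a morphism with the stated equivariance. Once the equivariant isomorphism $U\cong O_4$ is secured, the transfer of the dense Borel orbit from $\mathcal{S}_4$ is formal.
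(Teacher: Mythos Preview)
Your proposal is correct, but it takes a genuinely different route from the paper's proof. The paper argues directly: using Lemma \ref{sym_symp} it picks the conic in $LG(2,4)$ coming from a ruling of the specific quadric $Q=\{x_0^2+x_1^2-x_2^2-x_3^2=0\}$, and then computes by hand that the stabilizer of $Q$ in the Borel subgroup $\mathscr{B}\subset Sp(4)$ of Remark \ref{borelsym} is the finite group $\{\diag(a,b,a^{-1},b^{-1}):a^2=b^2=1\}$. Since $\dim\mathscr{B}=6=\dim\mmle$, the $\mathscr{B}$-orbit of this point is dense, and normality is supplied by \cite{KP} as you note.

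The trade-offs are as follows. The paper's explicit stabilizer computation is entirely self-contained: it does not invoke Theorem \ref{main1}, Luna's theorem, or any equivariant identification with $O_4$, so Lemma \ref{sphKLG} becomes logically independent of the wonderful machinery and of the isomorphism in Proposition \ref{isor2} that follows it. Your argument is cleaner conceptually and avoids matrix manipulations, but it shifts the work into establishing the equivariant isomorphism $U\xrightarrow{\sim}O_4$, which you rightly flag as the delicate point and which is essentially a fragment of the later Proposition \ref{isor2}. Once that is granted, your transport of the dense Borel orbit from $\mathcal{S}_4$ via Theorem \ref{main1} and Luna is formal and valid. Both approaches succeed; the paper's choice keeps the dependency graph lighter at the cost of a short explicit calculation.
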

\begin{proof}
By Lemma \ref{sym_symp} a ruling of the quadric $Q = \{x_0^2 + x_1^2 - x_2^2 - x_3^2 =0\}$ yields a conic in $LG(2,4)$. Let $\mathscr{B}\subset Sp(4)$ be the Borel subgroup of the symplectic group in Remark \ref{borelsym}. Note that $\dim(\mathscr{B})=6$. The stabilizer of $Q$ in $\mathscr{B}$ is given by
$$\begin{pmatrix}
A_{2,2} & 0_{2,2} \\
B_{2,2} & A_{2,2}^{-t}
\end{pmatrix}  \begin{pmatrix}
I_{2,2} & 0_{2,2} \\
0_{2,2} & -I_{2,2}
\end{pmatrix} \begin{pmatrix}
A_{2,2}^t & B_{2,2}^t \\
0_{2,2} & A_{2,2}^{-1}
\end{pmatrix} = \begin{pmatrix}
A_{2,2}^tA_{2,2} & A_{2,2}B^t_{2,2} \\
B_{2,2}A^t_{2,2} & B_{2,2}B^t_{2,2} - A_{2,2}^{-t}A_{2,2}^{-1}
\end{pmatrix}$$
So, we get $B_{2,2}=0_{2,2}$ and $A_{2,2}^tA_{2,2} = I_{2,2}$. Then 
$$Stab_{\mathscr{B}}(Q) = \left\lbrace M = \begin{pmatrix}
a & 0 & 0 & 0 \\
0 & b &0 & 0 \\
0 & 0 & \frac{1}{a} & 0 \\
0 & 0 & 0 & \frac{1}{b} \\
\end{pmatrix}; \text{ with } a^2 = b^2 = 1 \right\rbrace$$
and $\dim(Stab_{\mathscr{B}}(Q))=0$.
\end{proof}

\begin{Proposition}\label{isor2}
The restriction of the map in (\ref{2to1}) to $\mmle$ yields an isomorphism
\stepcounter{thm}
\begin{equation}\label{maplg}
\varphi : \mmle \rightarrow \mathcal{S}_4
\end{equation}
where $\mathcal{S}_4$ is the wonderful compactification of the space of symplectic quadrics of $\mathbb{P}^3$.
\end{Proposition}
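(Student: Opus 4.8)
The plan is to realize $\varphi$ as the restriction to $\mmle$ of the degree two morphism $\phi$ in (\ref{2to1}), and to use Lemma \ref{sym_symp} to collapse the two-to-one behaviour of $\phi$ into a bijection. First I would fix the target by identifying $\mathcal{S}_4$ with a closed subvariety of the space of complete quadric surfaces $\mathcal{Q}(3)$ of Construction \ref{ccq}. Indeed, in $\mathbb{P}^9 = \mathbb{P}(\Sym^2 K^4)$ the variety $X_4$ meets the first blow-up centre $\mathcal{V}_2^3$ of $\mathcal{Q}(3)$ along $\mathcal{V}_2^3$ itself and the locus $\sec_2(\mathcal{V}_2^3)$ along the divisor $S_2(\mathcal{V}_2^3) = Y_2$ (Propositions \ref{x4g14} and \ref{fun}); hence the strict transform of $X_4$ after the first blow-up is the blow-up of $X_4 \cong \mathbb{G}(1,4)$ along $\mathcal{V}_2^3$, namely $\mathcal{S}_4$ by Theorem \ref{A}, and the second blow-up, whose centre meets $\mathcal{S}_4$ in the Cartier divisor $S_2^{(1)}(\mathcal{V}_2^3)$, leaves it unchanged. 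This yields a closed immersion $\mathcal{S}_4 \hookrightarrow \mathcal{Q}(3)$.

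Next I would compose the closed embedding $\mmle \hookrightarrow \overline{M}_{0,0}(\mathbb{G}(1,3),2)$ induced by $LG(2,4) \subset \mathbb{G}(1,3)$ with $\phi$. On the dense locus of smooth conics $C$ this sends $C$ to the smooth quadric $\bigcup_{[L]\in C} L$, which by Lemma \ref{sym_symp} admits a symplectic scalar multiple and hence is a point of $X_4$; passing to closures inside $\mathcal{Q}(3)$ shows that $\varphi := \phi|_{\mmle}$ factors through a morphism $\varphi : \mmle \to \mathcal{S}_4$. Both source and target are irreducible projective varieties of dimension $6$, using $\dim \mmle = \frac{r^2+5r-2}{2} = 6$ and $\dim \mathcal{S}_4 = r(r+1) = 6$ for $r = 2$.

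The crux is that $\varphi$ is birational. A general fibre of $\phi$ consists of the two rulings of a smooth quadric, and Lemma \ref{sym_symp} asserts that over a general symplectic quadric exactly one of these rulings is Lagrangian, that is, lies in $LG(2,4)$; therefore $\varphi$ is generically injective and dominant, hence birational onto $\mathcal{S}_4$. Moreover $\varphi$ is proper (both varieties are projective) and quasi-finite (its fibres are contained in those of the finite morphism $\phi$), so it is finite. A finite birational morphism onto the normal variety $\mathcal{S}_4$ — normal because it is smooth by Theorem \ref{main1} — is an isomorphism, which finishes the proof; explicitly the inverse assigns to a symplectic quadric the conic swept out by its unique Lagrangian ruling.

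The step I expect to be the main obstacle is the surjectivity of $\varphi$ onto $\mathcal{S}_4$, needed to upgrade \emph{dominant to $X_4$} into \emph{birational onto $\mathcal{S}_4$}: one must track the two successive blow-ups defining $\mathcal{Q}(3)$ against the single blow-up defining $\mathcal{S}_4$, and check that the boundary loci of $\mmle$ (reducible conics and multiple covers) map onto the exceptional divisor $E_1$ and the boundary divisor $S_2^{(1)}(\mathcal{V}_2^3)$ rather than into smaller strata. Invoking the finite-birational-to-normal principle is precisely what lets me bypass a delicate stratum-by-stratum comparison of the two boundaries.
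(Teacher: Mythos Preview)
Your proposal is correct and follows essentially the same route as the paper: use Lemma \ref{sym_symp} to see that the restriction of $\phi$ is generically injective, compare dimensions, and conclude via normality of the target (the paper phrases this as Zariski's main theorem, you as ``finite birational onto normal''). You are more explicit than the paper in justifying the embedding $\mathcal{S}_4 \hookrightarrow \mathcal{Q}(3)$ and the finiteness of $\varphi$, and your self-identified ``obstacle'' is correctly dissolved by the very principle you invoke.
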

\begin{proof}
By Lemma \ref{sym_symp} the restriction of the map in (\ref{2to1}) to $\mmle$ yields a $1$-to-$1$ morphism which is surjective since both $\mmle$ and $\mathcal{S}_4$ are $6$-dimensional. 

Finally, since $\mathcal{S}_4$ is smooth and $\mmle$ is normal Zariski's main theorem \cite[Chapter 3, Section 9]{Mum99} yields that the morphism in (\ref{maplg}) is an isomorphism. 
\end{proof}

\begin{Lemma}\label{boucollg}
The divisor classes $\Delta^{2}, D_{unb}^{2}$ and the divisor classes $H_{\sigma_2}^{2},T^{2}$ are respectively the classes of the boundary divisors and the colors of the spherical variety $\overline{M}_{0,0}(LG(2,4),2)$.
\end{Lemma}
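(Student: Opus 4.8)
The plan is to transport the entire spherical structure through the isomorphism $\varphi:\mmle\to\mathcal{S}_4$ of Proposition \ref{isor2}. First I would record that $\varphi$ is $Sp(4)$-equivariant: this is exactly the compatibility of the action (\ref{actKLG}) on $\mmle$ with the action (\ref{acsimp}) on $\mathcal{S}_4$ used in the proof of Lemma \ref{sym_symp}. An equivariant isomorphism of spherical varieties carries boundary divisors to boundary divisors and colors to colors, so by Proposition \ref{bound_col} (which lists the two boundary divisors $E_1,S_2^{(1)}(\mathcal{V}_2^3)$ and the two colors $D_1,D_2$ of $\mathcal{S}_4$) the variety $\mmle$ has exactly two boundary divisors and exactly two colors. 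The whole problem thus reduces to sorting the four classes $\Delta^2,D_{unb}^2,H_{\sigma_2}^2,T^2$ into the $Sp(4)$-invariant ones (boundary) and the $\mathscr{B}$-invariant but not $Sp(4)$-invariant ones (colors).

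Next I would check that $\Delta^2$ and $D_{unb}^2$ are $Sp(4)$-invariant. The action (\ref{actKLG}) sends $[C,\alpha]$ to $[C,\wedge^2 M\circ\alpha]$: it leaves the domain curve $C$ unchanged and post-composes $\alpha$ with the automorphism of $LG(2,4)$ induced by $M$. Hence it preserves the reducibility of $C$, so $\Delta^2$ is invariant; and it carries the subbundle $\mathcal{E}_\alpha\subset\mathcal{O}_{\mathbb{P}^1}\otimes K^4$ isomorphically onto $\mathcal{E}_{\wedge^2 M\circ\alpha}$, so the splitting type is preserved and $D_{unb}^2$ is invariant as well. Since $\Delta^2$ and $D_{unb}^2$ are distinct prime divisors, they are precisely the two boundary divisors of $\mmle$.

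It then remains to exhibit $H_{\sigma_2}^2$ and $T^2$ as colors, which I expect to be the delicate step. Taking $F^1$ to be the isotropic line fixed by $\mathscr{B}$ makes the Schubert variety $\Sigma_2^2=\{W:F^1\subset W\}$, and hence the divisor $H_{\sigma_2}^2=\pi_{*}ev^{*}\sigma_2^2$ obtained by pushforward along the $\mathscr{B}$-equivariant maps $\pi$ and $ev$, a $\mathscr{B}$-invariant prime divisor; likewise, choosing the hyperplane section of $LG(2,4)$ cut out by a $\mathscr{B}$-eigenvector of $H^0(LG(2,4),\mathcal{O}(1))$ makes $T^2$ $\mathscr{B}$-invariant. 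Neither divisor is $Sp(4)$-invariant, since $Sp(4)$ acts transitively on the lines of $K^4$ and moves the chosen hyperplane section. Therefore $H_{\sigma_2}^2$ and $T^2$ are two distinct colors, and by the count above they are all the colors. The main obstacle is precisely this last step: verifying that $T^2$, defined by the more opaque tangency condition, has a genuinely $\mathscr{B}$-invariant prime representative and fails to be $Sp(4)$-invariant. As a cross-check one may instead match divisor classes through $\varphi^{*}$, using $D_1\sim H$, $D_2\sim 2H-E_1$ and $S_2^{(1)}(\mathcal{V}_2^3)\sim 2H-2E_1$ from Proposition \ref{mcd_4} to confirm the assignment.
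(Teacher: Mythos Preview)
Your proposal is correct and follows essentially the same route as the paper's proof: transport the spherical structure through the $Sp(4)$-equivariant isomorphism $\varphi$, invoke Proposition~\ref{bound_col} to know there are exactly two boundary divisors and two colors, check directly that $\Delta^2,D_{unb}^2$ are $Sp(4)$-invariant, and exhibit $\mathscr{B}$-invariant representatives for $H_{\sigma_2}^2,T^2$. The paper resolves your ``main obstacle'' concretely by choosing the isotropic flag $\{x_0=x_1=0\}\subset\{x_0=0\}$, which is stabilized by the Borel subgroup of Remark~\ref{borelsym}; and rather than arguing non-$Sp(4)$-invariance of $H_{\sigma_2}^2,T^2$ directly, it identifies the pullbacks $\varphi^{-1}(S_2^{(1)}(\mathcal{V}_2^3)),\varphi^{-1}(E_1),\varphi^{-1}(D_1),\varphi^{-1}(D_2)$ with $\Delta^2,D_{unb}^2,H_{\sigma_2}^2,T^2$ respectively---what you list as a cross-check is in fact the paper's main mechanism for the sorting.
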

\begin{proof}
The actions (\ref{actKLG}) and (\ref{acsimp}) are equivariant with respect to the map $\varphi$ in (\ref{maplg}). So boundary divisors and colors of $\mmle$ are mapped by $\varphi$ to boundary divisors and colors of $\mathcal{S}_4$ respectively. By Proposition \ref{bound_col}, in $\mathcal{S}_4$ the colors are $D_1,D_2$ and the boundary divisors are $E_1,S_2^{(1)}(\mathcal{V}_2^{3})$. Moreover, $\Delta^{2}, D_{unb}^{2}$ are stabilized by the $Sp(4)$-action in (\ref{actKLG}) and choosing the flag of isotropic linear subspaces $ \{x_0 = x_1 = 0\}\subset\{x_0 = 0\}$ we see that $H_{\sigma_2}^{2},T^{2}$ are stabilized by the action of the Borel subgroup of $Sp(4)$ in Remark \ref{borelsym}. Moreover, it is straightforward to see that the inverse image via the morphism $\varphi$ in (\ref{maplg}) of $S_2^{(1)}(\mathcal{V}_2^{3}), E_1, D_1,D_2$ are divisors of classes $\Delta^{2}, D_{unb}^{2},H_{\sigma_2}^{2},T^{2}$. Now, assume to have another boundary divisor in $\mmle$. Then, $\varphi$ maps this divisor to a boundary divisor of $\mathcal{S}_4$, but the only boundary divisors of $\mathcal{S}_4$ are $S_2^{(1)}(\mathcal{V}_2^{3}), E_1$. Then, the only boundary divisors of $\mmle$ are $\Delta^{2}, D_{unb}^{2}$, and similarly the only colors of $\mmle$ are $H_{\sigma_2}^{2},T^{2}$.
\end{proof}

We denote by $\overline{M}_{0,0}(LG(r,2r),2,1)$ the moduli space of weighted stable maps to $LG(r,2r)$. In this space degree one tails of a stable map are replaced by their attaching point. We refer to \cite{MM07} for the construction of moduli of weighted stable maps. 

\begin{Proposition}\label{effneflg}
The divisors $\Delta^{r}, D_{unb}^{r}$ generate the effective cone of $\mml$, and the divisors $H_{\sigma_2}^r, T^r$ generate the nef cone of $\mml$. 

The divisor $H_{\sigma_2}^r$ induces a birational morphism 
$$f_{H_{\sigma_2}^r}:\mml\rightarrow \widetilde{Chow}(LG(r,2r),2)$$ 
which is an isomorphism away form the locus $Q^r(1)$ of double covers of a line in $LG(r,2r)$, and contracts $Q^r(1)$ so that the locus of double covers with the same image maps to
a point, where $\widetilde{Chow}(LG(r,2r),2)$ is the normalization of the Chow variety of conics in $LG(r,2r)$.

The divisor $T^r$ induces a morphism 
$$f_{T^r}:\mml\rightarrow \overline{M}_{0,0}(LG(r,2r),2,1)$$ 
which is an isomorphism away from $\Delta^r$ and contracts the locus of maps with reducible domain $[C_1\cup C_2,\alpha]$ to $\alpha(C_1\cap C_2)$. Hence, $f_{T^r}$ contracts the divisor $\Delta^r$ onto $LG(r,2r)\subset \overline{M}_{0,0}(LG(r,2r),2,1)$. 
\end{Proposition}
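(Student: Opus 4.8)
The plan is to work throughout in the rank-two group $\Pic(\mml)$, which by \cite[Theorem 1]{Op05} is generated by $\Delta^r$ and $H_{\sigma_2}^r$, and to combine two sources of information: the restriction to the $r=2$ locus and the two natural contractions of $\mml$. First I would fix the position of the four classes $\Delta^r, D_{unb}^r, H_{\sigma_2}^r, T^r$ in the plane $\Pic(\mml)_{\mathbb{R}}$. Using the embedding $j:\mmle\rightarrow\mml$ of Proposition \ref{propemblg}, whose pullback $j^{*}$ is an isomorphism on Picard groups, and checking from the definitions that $j^{*}$ carries each of the four classes to its $r=2$ analogue (this is recorded for $H_{\sigma_2}^r$ and $\Delta^r$ just after Remark \ref{comlg}, and is routine for $T^r$ and $D_{unb}^r$), the linear relations $H_{\sigma_2}^r\sim\frac{1}{2}(\Delta^r+2D_{unb}^r)$ and $T^r\sim\Delta^r+D_{unb}^r$ follow by injectivity of $j^{*}$ from the corresponding relations on $\mmle\cong\mathcal{S}_4$, which are read off from Proposition \ref{mcd_4} through the isomorphism $\varphi$ of Proposition \ref{isor2}. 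These relations already force $D_{unb}^r$ and $\Delta^r$ to lie outside, and on opposite sides of, the plane angle spanned by $H_{\sigma_2}^r$ and $T^r$, so the whole problem reduces to deciding which rays are nef and which are merely effective.

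The geometric heart is the construction of the two contractions. For $H_{\sigma_2}^r$ I would use the cycle (Chow) morphism sending a stable map to its image $1$-cycle, landing in the normalization $\widetilde{Chow}(LG(r,2r),2)$ of the Chow variety of conics; this is an isomorphism over the locus of maps birational onto their image and collapses the locus $Q^r(1)$ of double covers of a line, since all degree-two covers of a fixed line $L\subset LG(r,2r)$ carry the same image cycle $2[L]$. For $T^r$ I would use the reduction morphism to the moduli space $\overline{M}_{0,0}(LG(r,2r),2,1)$ of weighted stable maps of \cite{MM07}, which replaces a degree-one tail of a reducible map $[C_1\cup C_2,\alpha]$ by its attaching point $\alpha(C_1\cap C_2)$, contracting $\Delta^r$ onto the copy of $LG(r,2r)$ inside the target. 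Both morphisms exhibit $H_{\sigma_2}^r$ and $T^r$ as semiample, hence nef, and since each is a nontrivial contraction (of $Q^r(1)$, respectively of $\Delta^r$) neither class is ample; as these are two distinct boundary rays of the full-dimensional cone $\Nef(\mml)$ in a rank-two group, they are its extremal rays, giving $\Nef(\mml)=\langle H_{\sigma_2}^r,T^r\rangle$.

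For the effective cone, $\Delta^r$ and $D_{unb}^r$ are effective by construction, and the relations of the first paragraph place them outside $\Nef(\mml)$ on opposite sides, so it only remains to show they are extremal in $\Eff(\mml)$. The divisor $\Delta^r$ is irreducible and contracted by the birational morphism $f_{T^r}$, hence rigid, so it spans a boundary ray. For $D_{unb}^r$ I would argue through the rational map $\xi:\mml\dashrightarrow SG(r-2,2r)$ induced by the trivial subbundle $\mathcal{O}_{\mathbb{P}^1}^{\oplus r-2}\subset\mathcal{E}_{\alpha}$, of which $D_{unb}^r$ is a pullback: a curve sweeping out a fibre of $\xi$ meets $D_{unb}^r$ in degree zero while meeting $H_{\sigma_2}^r$ positively, which pins $D_{unb}^r$ onto the boundary of $\Eff(\mml)$ opposite to $\Delta^r$. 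For $r=2$ this specializes, under $\varphi$, to $D_{unb}^2$ corresponding to the exceptional divisor $E_1$ of the blow-down $\mathcal{S}_4\rightarrow\mathbb{G}(1,4)$; cf.\ Proposition \ref{mcd_4}. Alternatively one may restrict the known effective cone of $\overline{M}_{0,0}(\mathbb{G}(r-1,2r-1),2)$ from \cite{CC10}, \cite{CC11} along the compatibilities recorded before the statement, since $D_{unb}^{r-1,2r-1}$ and $\Delta^{r-1,2r-1}$ restrict to $D_{unb}^r$ and $\Delta^r$. Together with the previous paragraph this yields $\Eff(\mml)=\langle\Delta^r,D_{unb}^r\rangle$.

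The main obstacle is the uniform, $r$-independent control of the two morphisms: that $f_{H_{\sigma_2}^r}$ is birational with exceptional locus exactly $Q^r(1)$, collapsing each family of double covers of a fixed line to one point, and that $f_{T^r}$ contracts $\Delta^r$ precisely onto $LG(r,2r)$. Restriction along $j$ controls only the Picard lattice and the linear relations, not the cones or the contractions, so the semiampleness of $H_{\sigma_2}^r$ and $T^r$ genuinely requires building the cycle map and the weighted-stable-map morphism and analysing their fibres; here I would lean on the analogous analysis for Grassmannians in \cite{CC10}, \cite{CC11} and on the valuative description of the cycle map. The extremality of $D_{unb}^r$ for $r\geq 3$, where it is a fibration-type rather than an exceptional boundary divisor, is the most delicate point, and is exactly what prevents the effective-cone computation from being a formal consequence of the $r=2$ case.
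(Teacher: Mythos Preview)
Your approach is essentially the same as the paper's: both establish the linear relations via the embedding $j$ of Proposition~\ref{propemblg} and the identification $\mmle\cong\mathcal{S}_4$, both import the two contractions $f_{H_{\sigma_2}^r}$ and $f_{T^r}$ from \cite{CC10} (specifically \cite[Proposition~3.7 and Theorem~3.8]{CC10}) via the inclusion into $\overline{M}_{0,0}(\mathbb{G}(r-1,2r-1),2)$, and both show $D_{unb}^r$ is extremal in $\Eff$ by producing a moving curve $\Gamma$ with $D_{unb}^r\cdot\Gamma=0$. The paper's curve is exactly of the type you describe: a pencil of conics in a fixed copy of $LG(2,4)\subset LG(r,2r)$, which lies in a fibre of your $\xi$.

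One point deserves sharpening. Having a curve $\Gamma$ with $D_{unb}^r\cdot\Gamma=0$ and $H_{\sigma_2}^r\cdot\Gamma>0$ does not by itself place $D_{unb}^r$ on the boundary of $\Eff$; you need that $\Gamma$ is a \emph{moving} curve (its deformations dominate $\mml$) and then invoke the duality between the pseudoeffective cone and the cone of moving curves \cite[Theorem~2.2]{BDPP13}. The paper makes this explicit. Your alternative of restricting the effective cone from $\overline{M}_{0,0}(\mathbb{G}(r-1,2r-1),2)$ is not safe as stated: effective cones do not in general restrict to effective cones, and the ambient Picard rank is larger, so extremality there does not transfer automatically. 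Stick with the moving-curve argument and cite BDPP.
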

\begin{proof}
By \cite[Proposition 4.5.4.4]{ADHL15} and Lemma \ref{boucollg} the effective cone of $\overline{M}_{0,0}(LG(2,4),2)$ is generated by $\Delta^{2},D_{unb}^{2}, H_{\sigma_2}^{2}, T^{2}$. Consider the isomorphism $\varphi$ in (\ref{maplg}). We have
$$
\varphi^{*}E_1 =D_{unb}^{2},\: \varphi^{*}S_2^{(1)}(\mathcal{V}_2^3) = \Delta^{2},\: \varphi^{*}D_1 = H_{\sigma_2}^{2},\: \varphi^{*}D_2= T^{2}.
$$
Now, the relations among the boundary divisors and the colors of $\mathcal{S}_4$ in Proposition \ref{mcd_4} yield the following relations in the Picard group of $\mmle$:
\stepcounter{thm}
\begin{equation}\label{relPiclg}
H_{\sigma_2}^{2}\sim\frac{\Delta^{2}+2D_{unb}^2}{2},\:  T^{2} \sim \Delta^{2}+D_{unb}^2
\end{equation}
and the statement in the case $r = 2$ follows from Propositions \ref{eff_nef} and \ref{isor2}.

Now, consider the case $r > 2$. Since $T^r$ is the pull-back of $T^{r-1,2r-1}$ via the embedding $\overline{M}_{0,0}(LG(r,2r),2)\hookrightarrow\overline{M}_{0,0}(\mathbb{G}(r-1,2r-1),2)$ \cite[Theorem 3.8]{CC10} yields that $T^r$ induces a morphism 
$$f_{T^r}:\mml\rightarrow \overline{M}_{0,0}(LG(r,2r),2,1)$$ 
which is an isomorphism away from $\Delta^r$ and contracts the locus of maps with reducible domain $[C_1\cup C_2,\alpha]$ to $\alpha(C_1\cap C_2)$. Hence, $f_{T^r}$ contracts the divisor $\Delta^r$ onto $LG(r,2r)\subset \overline{M}_{0,0}(LG(r,2r),2,1)$. So $\Delta^r$ generates an extremal ray of the effective cone, and $T^r$ generates an extremal ray of the nef cone.

Similarly, \cite[Proposition 3.7]{CC10} yields the morphism $f_{H_{\sigma_2}^r}:\mml\rightarrow \widetilde{Chow}(LG(r,2r),2)$, and hence $H_{\sigma_2}^r$ generates the other extremal ray of the nef cone. 

Now, following the proof of \cite[Lemma 3.4]{CC10} we define the class of a curve $\Gamma$ in $\overline{M}_{0,0}(LG(r,2r),2)$ whose deformations cover the whole of $\overline{M}_{0,0}(LG(r,2r),2)$. Consider a general hyperplane section $Z$ of $LG(2,4)\subset\mathbb{P}^4$, and a general line in this hyperplane section. The planes containing the line cut out a pencil of conics on $Z\subset LG(2,4)$. Hence we get a rational curve $C\subset \overline{M}_{0,0}(LG(2,4),2)$ parametrizing these conics. Let $\Gamma$ be the image of $C$ via the embedding in Proposition \ref{propemblg}. Then $H_{\sigma_2}^r\cdot\Gamma = 1$, and $\Delta^r\cdot\Gamma = 2$ since there are two reducible conics in a general pencil of conics in the quadric surface $Z$. Now, by (\ref{relPiclg}) we get that $D_{unb}^r\cdot\Gamma = 0$, and by \cite[Theorem 2.2]{BDPP13} we conclude that $D_{unb}^r$ generates the other extremal ray of the effective cone. 
\end{proof}

\begin{Remark}\label{contr4}
Note that $Q^r(1)$ is a divisor in $\overline{M}_{0,0}(LG(r,2r),2)$ if and only if $r = 2$. By Proposition \ref{isor2} we have $\mmle\cong\mathcal{S}_4$ which by Proposition \ref{x4g14} is the blow-up of $\mathbb{G}(1,4)$ along the Veronese $\mathcal{V}_2^{3}$. In this case 
$$f_{H_{\sigma_2}^2}:\mmle\rightarrow \widetilde{Chow}(LG(2,4),2)$$ 
is nothing but the blow-down morphism $\mathcal{S}_4\rightarrow\mathbb{G}(1,4)$. Indeed, since $LG(2,4)\subset\mathbb{P}^4$ is a quadric hypersurface and hence does not contain any plane we have that all planes in $\mathbb{P}^4$ cut out a conic on $LG(2,4)$. Hence, we may identify the Chow variety of conics in $LG(2,4)$ with $\mathbb{G}(2,4)\cong\mathbb{G}(1,4)$.

Furthermore, by Proposition \ref{mcd_4} the morphism 
$$f_{T^2}:\mmle\rightarrow \overline{M}_{0,0}(LG(2,4),2,1)$$ 
is induced by the strict transform of the restriction to $\mathbb{G}(1,4)$ of the linear system of quadrics in $\mathbb{P}^9$ containing $\mathcal{V}_2^{3}$. In this way we realize $\overline{M}_{0,0}(LG(2,4),2,1)$ as a $6$-fold of degree $40$ in $\mathbb{P}^{14}$ which is singular along a $3$-fold isomorphic to $LG(2,4)$. 
\end{Remark}

\begin{thm}\label{mcd_lg}
The Mori chamber decomposition of $\Eff(\mml)$ has three chambers as displayed in the following picture:
$$
\begin{tikzpicture}[line cap=round,line join=round,>=triangle 45,x=1.0cm,y=1.0cm]
xmin=2.5,
xmax=7.5,
ymin=1.7032313370047312,
ymax=5.301934941656083,
xtick={2.5,3.0,...,5.5},
ytick={2.0,2.5,...,5.0},]
\clip(2.1,2.0) rectangle (7.5,5.4);
\draw [->,line width=0.1pt] (3.,4.) -- (3.,5.); 
\draw [->,line width=0.1pt] (3.,4.) -- (4.,4.); 
\draw [->,line width=0.1pt] (3.,4.) -- (5.,3.); 
\draw [->,line width=0.1pt] (3.,4.) -- (5.,2.); 
\draw [shift={(3.,4.)},line width=0.4pt,fill=black,fill opacity=0.15000000596046448]  (0,0) --  plot[domain=-0.46364760900080615:0.,variable=\t]({1.*0.6965067581669063*cos(\t r)+0.*0.6965067581669063*sin(\t r)},{0.*0.6965067581669063*cos(\t r)+1.*0.6965067581669063*sin(\t r)}) -- cycle ;
\begin{scriptsize}
\draw[color=black] (3.085808915158281,5.2) node {$D_{unb}^r$};
\draw[color=black] (4.4,4) node {$H_{\sigma_2}^r$};
\draw[color=black] (5.25,3) node {$T^r$};
\draw[color=black] (5.3,2.1) node {$\Delta^r$};
\end{scriptsize}
\end{tikzpicture}
$$
where $H_{\sigma_2}^{r}\sim \frac{1}{2}(\Delta^r+2D_{unb}^r)$ and $T^r\sim\Delta^r+D_{unb}^r$. Furthermore, $\Mov(\overline{M}_{0,0}(LG(r,2r),2))$ is generated by $T^r$ and $D_{unb}^r$ if $r > 2$, while $\Mov(\overline{M}_{0,0}(LG(2,4),2))$ is generated by $T^2$ and $H_{\sigma_2}^2$. The Cox ring $\Cox(\overline{M}_{0,0}(LG(2,4),2))$ is generated by the sections of $\Delta^2,D_{unb}^2,H_{\sigma_2}^2,T^2$. 

The birational model $X_r$ corresponding to the chamber delimited by $H_{\sigma_2}^r$ and $D_{unb}^r$ is a fibration $X_r\rightarrow SG(r-2,2r)$ with fibers isomorphic to $\mathbb{G}(2,4)$, where $SG(r-2,2r)$ is the symplectic Grassmannian parametrizing isotropic subspaces of dimension $r-2$. Finally, $D_{unb}^r$ contracts $\overline{M}_{0,0}(LG(r,2r),2)$ onto $SG(r-2,2r)$.
\end{thm}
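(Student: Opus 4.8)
The plan is to exploit the fact that $\rho(\mml)=2$, so that $\overline{\Eff}(\mml)$ is a planar cone and its Mori chamber decomposition is merely a subdivision into finitely many two-dimensional subcones separated by rays. Thus the whole statement reduces to: (i) locating the four named classes inside the plane, (ii) attaching a birational model to each of the two interior walls, and (iii) checking that no further subdivision occurs. Proposition \ref{effneflg} already supplies the extremal data, namely $\Eff=\langle\Delta^r,D_{unb}^r\rangle$ and $\Nef=\langle H^r_{\sigma_2},T^r\rangle$, together with the two boundary contractions $f_{H^r_{\sigma_2}}$ to $\widetilde{Chow}(LG(r,2r),2)$ and $f_{T^r}$ to $\overline{M}_{0,0}(LG(r,2r),2,1)$.

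First I would establish the relations $H^r_{\sigma_2}\sim\tfrac12(\Delta^r+2D_{unb}^r)$ and $T^r\sim\Delta^r+D_{unb}^r$. For $r=2$ these are exactly (\ref{relPiclg}), obtained by transporting the relations $D_1\sim H$, $D_2\sim 2H-E_1$, $S_2^{(1)}(\mathcal{V}_2^3)\sim 2H-2E_1$ of Proposition \ref{mcd_4} through the isomorphism $\varphi$ of (\ref{maplg}). For $r>2$ I would pull back along the embedding $j:\mmle\hookrightarrow\mml$ of Proposition \ref{propemblg}: since $j^{*}$ is an isomorphism of the rank-two Picard groups carrying each named class to its $r=2$ counterpart — $j^{*}\Delta^r=\Delta^2$ and $j^{*}H^r_{\sigma_2}=H^2_{\sigma_2}$ from Proposition \ref{propemblg}, and $j^{*}T^r=T^2$, $j^{*}D_{unb}^r=D_{unb}^2$ by compatibility of the tangency and unbalancedness conditions with $i$ — the relations descend verbatim; the coefficient $\tfrac12$ can alternatively be read off from $H^r_{\sigma_2}\cdot\Gamma=1$, $\Delta^r\cdot\Gamma=2$, $D_{unb}^r\cdot\Gamma=0$ for the covering curve $\Gamma$ of Proposition \ref{effneflg}. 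The Cox-ring statement for $r=2$ then follows by transporting the generators $D_1,D_2,E_1,S_2^{(1)}(\mathcal{V}_2^3)$ of $\Cox(\mathcal{S}_4)$ through $\varphi$.

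Next I would identify the chambers. The nef cone $\langle H^r_{\sigma_2},T^r\rangle$ is the chamber of $\mml$ itself. Crossing $T^r$, the map $f_{T^r}$ is divisorial with exceptional divisor $\Delta^r$, so every class in the interior of $\langle T^r,\Delta^r\rangle$ has Zariski decomposition with semiample movable part inducing $f_{T^r}$ and fixed part a multiple of $\Delta^r$; this is a single Mori chamber with model $\overline{M}_{0,0}(LG(r,2r),2,1)$ and stable base locus $\Delta^r$. Crossing $H^r_{\sigma_2}$, the map $f_{H^r_{\sigma_2}}$ is \emph{small} for $r>2$ (by Remark \ref{contr4}, $Q^r(1)$ has codimension $>1$), hence it is flipped to a model $X_r$ and $\langle H^r_{\sigma_2},D_{unb}^r\rangle$ is the pullback of $\Nef(X_r)$; for $r=2$ the same contraction is the divisorial blow-down $\mathcal{S}_4\to\mathbb{G}(1,4)$ of Remark \ref{contr4}, so $\langle H^2_{\sigma_2},D_{unb}^2\rangle$ lies outside $\Mov$ with stable base locus $D_{unb}^2$. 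This dichotomy yields the movable cone, $\Mov=\langle H^2_{\sigma_2},T^2\rangle=\Nef$ when $r=2$ and $\Mov=\langle D_{unb}^r,T^r\rangle$ when $r>2$, via \cite[Proposition 3.3.2.3]{ADHL15}. Exactly three chambers remain once one checks that neither $\langle T^r,\Delta^r\rangle$ nor $\langle H^r_{\sigma_2},D_{unb}^r\rangle$ subdivides further — the former because $\Delta^r$ is an irreducible exceptional divisor and the model is fixed, the latter because the single flip already carries $D_{unb}^r$ to a fibre-type class on $X_r$ — and the Mori-dream property for all $r$ then follows from this explicit finite list of models and contractions, with no appeal to weak-Fano-ness.

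Finally, for the model $X_r$ I would analyze the rational map $\xi:\mml\dashrightarrow SG(r-2,2r)$ sending $[\mathbb{P}^1,\alpha]$ to the $(r-2)$-dimensional isotropic subspace $H_\alpha$ cut out by the trivial subbundle $\mathcal{O}_{\mathbb{P}^1}^{\oplus r-2}\subset\mathcal{E}_\alpha$; by construction $\xi$ is the rational map attached to $D_{unb}^r$. Fixing $W=H_\alpha$, the symplectic reduction $W^{\perp}/W$ is a four-dimensional symplectic space and the conics with $H_\alpha=W$ are exactly the conics of $LG(2,W^{\perp}/W)\cong LG(2,4)$; passing to the flipped model $X_r$ replaces the Kontsevich fibre $\mmle\cong\mathcal{S}_4$ by its contraction across $H^r_{\sigma_2}$, namely $\widetilde{Chow}(LG(2,4),2)\cong\mathbb{G}(1,4)\cong\mathbb{G}(2,4)$. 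This exhibits $X_r\to SG(r-2,2r)$ as a fibration with fibre $\mathbb{G}(2,4)$ (the case $r=2$ being the blow-down $\mathcal{S}_4\to\mathbb{G}(2,4)$ over $SG(0,4)=\mathrm{pt}$) and shows $D_{unb}^r$ contracts $\mml$ onto $SG(r-2,2r)$. I expect this last step to be the crux: one must verify that the resolved image of $\xi$ is all of $SG(r-2,2r)$, that the generic splitting type of $\mathcal{E}_\alpha$ makes $\xi$ dominant with the stated fibres, and — most delicately — that flipping the small locus $Q^r(1)$ of double covers turns the fibrewise Kontsevich model into the fibrewise Chow model, so that the fibre is $\mathbb{G}(2,4)$ rather than $\mathcal{S}_4$.
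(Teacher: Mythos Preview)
Your approach is essentially the paper's: the relations are pulled back from $r=2$ via $j^{*}$ (Proposition \ref{propemblg} and (\ref{relPiclg})), the extremal rays of $\Eff$ and $\Nef$ come from Proposition \ref{effneflg}, the $T^r$-wall is divisorial, and for $r=2$ the $H^2_{\sigma_2}$-wall is the blow-down of Remark \ref{contr4}. The Cox-ring statement for $r=2$ is likewise handled by transporting Proposition \ref{mcd_4} through $\varphi$.

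The one substantive difference is how you produce $X_r$ for $r>2$. You take $X_r$ to be the abstract flip of the small contraction $f_{H^r_{\sigma_2}}$ and then argue, via symplectic reduction along $W=H_{\alpha}$, that its fibres over $SG(r-2,2r)$ are $\mathbb{G}(2,4)$; you correctly flag as delicate the verification that the flip replaces the fibrewise $\mathcal{S}_4$ by the fibrewise $\mathbb{G}(2,4)$. The paper goes in the opposite direction and thereby dissolves this delicacy: it \emph{constructs} $X_r$ first, as the relative Hilbert scheme $\Hilb_2(LG(2,\mathcal{Q}_r))\to SG(r-2,2r)$, where $\mathcal{Q}_r=\mathcal{U}_r^{\perp}/\mathcal{U}_r$ is the universal rank-four symplectic quotient bundle and the fibres are $\mathbb{G}(2,4)$ because $LG(2,4)\subset\mathbb{P}^4$ is a quadric threefold containing no plane. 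A dimension count and the evident birational correspondence---an isomorphism off $Q^r(1)$ on one side and off the locus of double lines on the other, both of codimension $\geq 2$ when $r>2$---then exhibit $X_r$ as a small modification of $\mml$, and one reads off directly that $H^r_{\sigma_2}$ (send a conic to its reduced support) and $D^r_{unb}$ (the structure map to $SG(r-2,2r)$) span $\Nef(X_r)$. This buys the existence of the flip and the identification of its nef cone simultaneously, avoiding the circularity you were concerned about: rather than needing the flip to exist before identifying $X_r$, the paper builds $X_r$ first and the flip is then visible by inspection.
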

\begin{proof}
First consider the case $r = 2$. The statement on the generators of the Cox ring follows from Proposition \ref{effneflg} and Remark \ref{gen_cox}. Furthermore, by Remarks \ref{toric} and \ref{gen_cox} the Mori chamber decomposition of $\Eff(\overline{M}_{0,0}(LG(2,4),2))$ is a, possibly trivial, coarsening of the decomposition in the statement. Since by Proposition \ref{effneflg} the effective cone $\Eff(\overline{M}_{0,0}(LG(2,4),2))$ is generated by $\Delta^{2}$ and $D_{unb}^{2}$, and $H_{\sigma_2}^2, T^2$ generate $\Nef(\overline{M}_{0,0}(LG(2,4),2))$ no ray can be removed, and the Mori chamber decomposition is as in the statement. The relations $H_{\sigma_2}^{r}\sim \frac{1}{2}(\Delta^r+2D_{unb}^r)$ and $T^r\sim\Delta^r+D_{unb}^r$ follow from the proof of Proposition \ref{propemblg} and (\ref{relPiclg}).

Now, consider the case $r >2$. By Proposition \ref{effneflg} the wall-crossing of $T^r$ induces a divisorial contraction, and a divisor inside the chamber delimited by $T^r$ and $H^r_{\sigma_2}$ is ample. By Proposition \ref{effneflg} the wall-crossing of $H_{\sigma_2}^r$ yields a birational contraction whose exceptional locus is the variety $Q^r(1)$ of double covers of a line in $LG(r,2r)$.

Next, we will construct the birational model of $\mml$ corresponding to the chamber delimited by $H_{\sigma_2}^r$ and $D_{unb}^r$. Let $H\subset\mathbb{P}^{2r-1}$ be an $(r+1)$-plane containing an isotropic $(r-1)$-plane $\Pi\subset\mathbb{P}^{2r-1}$. Then $\Pi = \Pi^{\perp}\supset H^{\perp}$. So $H^{\perp}\subset H$. Now, the $(r+1)$-planes containing their orthogonal are in bijection with the $(r-3)$-planes of $\mathbb{P}^{2r-1}$ that are isotropic. The variety parametrizing such $(r-3)$-planes is the symplectic Grassmannian $SG(r-2,2r)$. Let $\mathcal{U}_r$ be the universal bundle on $SG(r-2,2r)$, $\mathcal{U}_r^{\perp}\subset \mathcal{U}_r$ its orthogonal, and $\mathcal{Q}_r = \mathcal{U}_r/\mathcal{U}_r^{\perp}$ the quotient bundle. Then $\mathcal{Q}_r$ has rank four, and we may consider the relative Lagrangian Grassmannian $LG(2,\mathcal{Q}_r)\rightarrow SG(r-2,2r)$, and the relative Hilbert scheme $\Hilb_2(LG(2,\mathcal{Q}_r))\rightarrow SG(r-2,2r)$. Note that since $LG(2,4)$ does not contain planes the fibers of $\Hilb_2(LG(2,\mathcal{Q}_r))\rightarrow SG(r-2,2r)$ are isomorphic to $\mathbb{G}(2,4)$. Indeed, we can associate to a plane in $\mathbb{P}^4$ the conic it cuts out on $LG(2,4)$. Set $X_r := \Hilb_2(LG(2,\mathcal{Q}_r))\rightarrow SG(r-2,2r)$. Note that
$$\dim(X_r) = \dim(SG(r-2,2r))+6 = 2r^2-4r-\frac{3(r-2)^2-r+2}{2} + 6 = \frac{r^2+5r-2}{2} = \dim(\mml)$$
and there is a birational transformation $\mml\dasharrow X_r$ inducing an isomorphism between the complement of $Q^r(1)$ in $\mml$ and the complement of the locus of double lines in $X_r$. Since $r> 2$ both these loci are in codimension greater that one. Furthermore, $H_{\sigma_2}^r$ induces a morphism on $X_r$ associating to a conic the reduced curve on which it is supported. Hence, this morphism is birational and contracts the locus of double lines. Finally $D_{unb}^r$ induces on $X_r$ the fibration $X_r \rightarrow SG(r-2,2r)$. Indeed, this fibration yields the rational fibration $\mml\dasharrow SG(r-2,2r)$ associating to a stable map that is not $2$-to-$1$ onto a line the orthogonal of the $(r+1)$-plane in $\mathbb{P}^{2r-1}$ generated by the $(r-1)$-planes parametrized by the image of the map. Hence, the cone generated by $H_{\sigma_2}^r$ and $D_{unb}^r$ is the nef cone of $X_r$. 

Finally, the claim about the movable cones follows from Remark \ref{contr4} since $H_{\sigma_2}^2$ induces a divisorial contraction, while for $r > 2$ the divisor $H_{\sigma_2}^2$ yields a small contraction and $D_{unb}^r$ induces a non trivial fibration. 
\end{proof}

We now study the positivity of the anti-canonical divisor of $\overline{M}_{0,0}(LG(r,2r),2)$.

\begin{Proposition}\label{can}
Let $\overline{\mathcal{M}}_{0,0}(LG(r,2r),2)$ be the smooth Deligne-Mumford stack of degree two stable maps to $LG(r,2r)$, $\overline{H}_{\sigma_2}^r,\overline{T}^r,\overline{\Delta}^r,\overline{D}_{unb}^r$ the divisors on $\overline{\mathcal{M}}_{0,0}(LG(r,2r),2)$ corresponding to $H_{\sigma_2}^r,T^r,\Delta^r,D_{unb}^r$ respectively.

The anti-canonical divisor of the stack $\overline{\mathcal{M}}_{0,0}(LG(r,2r),2)$ is given by
$$-K_{\overline{\mathcal{M}}_{0,0}(LG(r,2r),2)} = 5\overline{H}_{\sigma_2}^r +\frac{r-7}{2}\overline{D}_{unb}^r$$ 
for $r> 2$, while $-K_{\overline{\mathcal{M}}_{0,0}(LG(2,4),2)} = 5\overline{H}_{\sigma_2}^2 -5\overline{D}_{unb}^2$. Furthermore, the anti-canonical divisor of $\mmle$ is given by 
$$-K_{\overline{M}_{0,0}(LG(r,2r),2)} = 5H_{\sigma_2}^r +\frac{r-7}{2}D_{unb}^r$$
for $r>2$, while for $r = 2$ we have that
$$-K_{\mmle} = 5H_{\sigma_2}^2 -2 D_{unb}^2.$$
\end{Proposition}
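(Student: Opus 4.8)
The plan is to treat the coarse space and the stack in parallel, and within each to separate the exceptional case $r=2$ from the uniform behaviour for $r>2$. The key structural observation is that the only source of extra automorphisms in $\overline{\mathcal{M}}_{0,0}(LG(r,2r),2)$ is the locus $Q^r(1)$ of double covers of a line, whose generic point carries the $\mathbb{Z}/2\mathbb{Z}$ deck involution; away from $Q^r(1)$ the stack and its coarse space coincide. By Remark \ref{contr4} the locus $Q^r(1)$ is a divisor precisely when $r=2$. Hence for $r>2$ it has codimension $\ge 2$, contributes nothing to a divisor-class computation, and $-K_{\overline{\mathcal{M}}_{0,0}(LG(r,2r),2)}$ and $-K_{\overline{M}_{0,0}(LG(r,2r),2)}$ have the same expression in $\Pic\otimes\mathbb{Q}$. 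So I only need one formula valid for $r>2$, plus a separate analysis at $r=2$.

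For $r=2$ I would compute the coarse class from the blow-up description. By Propositions \ref{isor2} and \ref{x4g14}, $\mmle\cong\mathcal{S}_4$ is the blow-up of $\mathbb{G}(1,4)$ along the smooth threefold $\mathcal{V}_2^3$, of codimension three; since $K_{\mathbb{G}(1,4)}=\mathcal{O}(-5)$, the standard blow-up formula gives $-K_{\mathcal{S}_4}=5H-2E_1$. Transporting this through $\varphi$ using $\varphi^*E_1=D_{unb}^2$ and $\varphi^*H=\varphi^*D_1=H_{\sigma_2}^2$ (from Proposition \ref{mcd_4} and the proof of Proposition \ref{effneflg}) yields $-K_{\mmle}=5H_{\sigma_2}^2-2D_{unb}^2$. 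For the stack I would pass to the coarse map $\pi$ and analyze the $\mathbb{Z}/2\mathbb{Z}$ stabilizer along $D_{unb}^2=Q^2(1)$: the deck involution acts by $-1$ on the normal direction to the double-cover divisor, so the local model is $[\mathbb{A}^1/(\mathbb{Z}/2\mathbb{Z})]$, giving $\pi^*D_{unb}^2=2\overline{D}_{unb}^2$ and a discrepancy $K_{\overline{\mathcal{M}}}=\pi^*K_{\overline{M}}+\overline{D}_{unb}^2$. Combined with $\pi^*H_{\sigma_2}^2=\overline{H}_{\sigma_2}^2$, this turns $5H_{\sigma_2}^2-2D_{unb}^2$ into $5\overline{H}_{\sigma_2}^2-5\overline{D}_{unb}^2$, as claimed.

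For $r>2$ I would obtain the formula by a Grothendieck--Riemann--Roch computation on the universal conic $ev:\overline{\mathcal{M}}_{0,1}(LG(r,2r),2)\to LG(r,2r)$ with $\pi:\overline{\mathcal{M}}_{0,1}\to\overline{\mathcal{M}}_{0,0}$; equivalently, one restricts the canonical class of $\overline{M}_{0,0}(\mathbb{G}(r-1,2r-1),2)$ (from the divisor theory of \cite{CC10}) along the inclusion cut out by the section of $\wedge^2\mathcal{E}$ defining $LG(r,2r)\subset\mathbb{G}(r-1,2r-1)$ and corrects by the first Chern class of the associated bundle via adjunction. Either way the output has two terms: an $ev^*(-K_{LG(r,2r)})$-type contribution, using $\omega_{LG(r,2r)}\cong\mathcal{O}(-r-1)$, which produces a multiple of $H_{\sigma_2}^r$, and a contribution of the relative dualizing sheaf $\omega_\pi$, which produces the tangency class $T^r$. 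Collecting coefficients gives, in the nef basis, $-K=(r-2)H_{\sigma_2}^r+\tfrac{7-r}{2}T^r$, and rewriting through the relations $T^r\sim\Delta^r+D_{unb}^r$ and $H_{\sigma_2}^r\sim\tfrac12\Delta^r+D_{unb}^r$ of Theorem \ref{mcd_lg} (so that $T^r\sim 2H_{\sigma_2}^r-D_{unb}^r$) yields $-K=5H_{\sigma_2}^r+\tfrac{r-7}{2}D_{unb}^r$. As an independent anchor for the coefficient of $H_{\sigma_2}^r$ I would intersect with the moving curve $\Gamma$ of the proof of Proposition \ref{effneflg}, for which $H_{\sigma_2}^r\cdot\Gamma=1$ and $D_{unb}^r\cdot\Gamma=0$: writing $-K=aH_{\sigma_2}^r+bD_{unb}^r$ forces $a=-K\cdot\Gamma$, and a direct count on the pencil of conics on the quadric surface $Z\subset LG(2,4)$ gives $-K\cdot\Gamma=5$.

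The main obstacle is the Grothendieck--Riemann--Roch bookkeeping for $r>2$: isolating the coefficient of $T^r$ requires the Todd-class term of $\omega_\pi$ together with the node corrections supported on $\Delta^r$, and one must check that the two distinct Grassmannian divisors restricting to $H_{\sigma_2}^r$ (respectively to $D_{unb}^r$) are accounted for consistently. A secondary but genuinely delicate point is the $r=2$ stack computation: one must verify that the deck involution acts by $-1$, and not trivially, on the normal direction to $Q^2(1)$, since precisely this sign converts the coarse coefficient $-2$ into the stacky coefficient $-5$.
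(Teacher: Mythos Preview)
Your overall architecture matches the paper's: the stack and coarse space differ only along $Q^r(1)$, which is a divisor exactly when $r=2$; the ramification formula $K_{\overline{\mathcal{M}}}=\pi^*K_{\overline{M}}+\overline{D}_{unb}^2$ together with $\pi^*D_{unb}^2=2\overline{D}_{unb}^2$ handles the $r=2$ discrepancy. Your computation of the coarse anti-canonical at $r=2$ via the blow-up formula for $\mathcal{S}_4=\Bl_{\mathcal{V}_2^3}\mathbb{G}(1,4)$ is correct, and the paper itself notes this alternative in the Remark following the proof. The direction of travel differs, though: the paper computes the \emph{stack} canonical first, uniformly in $r$, and then descends to the coarse space; you compute the coarse class directly at $r=2$ and lift.

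The substantive difference is in how the $r>2$ formula is obtained. You propose a GRR computation on the universal conic, or an adjunction from $\overline{M}_{0,0}(\mathbb{G}(r-1,2r-1),2)$, but you do not actually carry it out: the line ``collecting coefficients gives, in the nef basis, $-K=(r-2)H_{\sigma_2}^r+\tfrac{7-r}{2}T^r$'' is an assertion, not a derivation, and you yourself flag the Todd and node terms as the main obstacle. The paper sidesteps this entirely by invoking the de Jong--Starr formula \cite[Theorem 1.1]{dJS17}, which packages the GRR computation once and for all and outputs $K_{\overline{\mathcal{M}}_{0,0}(X,2)}$ directly in terms of $c_1(T_X)$, $c_2(T_X)$, and $\overline{\Delta}$. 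The only work is then to compute $c_1(T_{LG(r,2r)})=(r+1)\sigma_1^r$ and $c_2(T_{LG(r,2r)})=(r^2+2r)\sigma_2^r$ from $T_{LG(r,2r)}\cong\Sym^2 S^\vee$ via the splitting principle, use $(\sigma_1^r)^2=2\sigma_2^r$, and plug in. This gives
\[
K_{\overline{\mathcal{M}}_{0,0}(LG(r,2r),2)}=-\tfrac{2r+6}{4}\,\overline{H}_{\sigma_2}^r+\tfrac{r-7}{4}\,\overline{\Delta}^r,
\]
and the change of basis to $\overline{H}_{\sigma_2}^r,\overline{D}_{unb}^r$ (with the extra factor of $2$ in $\overline{\Delta}^2=2\overline{H}_{\sigma_2}^2-4\overline{D}_{unb}^2$ at $r=2$) finishes. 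So your GRR plan would work in principle, but the de Jong--Starr formula is the right tool here and replaces the bookkeeping you were worried about with a two-line Chern-class computation. Your test-curve check that $-K\cdot\Gamma=5$ is a nice sanity check but does not pin down the $D_{unb}^r$-coefficient.
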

\begin{proof}
We will compute the canonical divisor of $\overline{\mathcal{M}}_{0,0}(LG(r,2r),2)$ using the formula in \cite[Theorem 1.1]{dJS17}. Hence, we need the Chern classes $c_1(T_{LG(r,2r)}), c_2(T_{LG(r,2r)})$, where $T_{LG(r,2r)}$ is the tangent bundle of $LG(r,2r)$. Recall that $T_{LG(r,2r)}\cong \Sym^2(S^{\vee})$, where $S$ is the universal bundle.

Let us pretend that $S^{\vee} = L_1\oplus\dots \oplus L_r$ splits as direct sum of line bundles. We will then use Whitney's formula along with the splitting principle to compute the Chern classes of $\Sym^2(S^{\vee})$. Set $c_1(L_i) = \alpha_i$ for $i = 1,\dots, r$. Then 
$$c(S^{\vee}) = \prod_{i = 1}^{r}(1+\alpha_i)$$
and hence
\begin{equation}\label{chern1}
c_1(S^{\vee}) = \alpha_1+\dots+\alpha_r,\quad c_2(S^{\vee}) = \alpha_1\alpha_2+\dots+\alpha_1\alpha_r+\alpha_2\alpha_3+\dots+\alpha_{r-1}\alpha_r.
\end{equation}
Furthermore 
$$\Sym^2(S^{\vee}) = L_1^{\otimes 2}\oplus (L_1\otimes L_2)\oplus\dots\oplus (L_1\otimes L_r)\oplus L_2^{\otimes 2}\oplus\dots \oplus L_r^{\otimes 2}
$$
yields 
$$
\begin{array}{ll}
c(\Sym^2(S^{\vee})) = & (1+2\alpha_1)(1+\alpha_1+\alpha_2)\dots (1+\alpha_1+\alpha_r)(1+2\alpha_2)\dots (1+2\alpha_r) =\\ 
 & 1+(r+1)\sum_{i=1}^r\alpha_i+\frac{r^2+r-2}{2}\sum_{i=1}^{r}\alpha_i^2+(r^2+2r)(\alpha_1\alpha_2+\dots+\alpha_{r-1}\alpha_r)+\dots=\\
 & 1 +(r+1)\sum_{i=1}^r\alpha_i + \frac{r^2+r-2}{2}(\sum_{i=1}^r\alpha_i)^2+(r+2)(\alpha_1\alpha_2+\dots+\alpha_{r-1}\alpha_r)+\dots=\\
 & 1 +(r+1)c_1(S^{\vee}) + \frac{r^2+r-2}{2}c_1(S^{\vee})^2+(r+2)c_2(S^{\vee})+\dots
\end{array} 
$$
where in the last equality we plugged-in the formulas in (\ref{chern1}). Recall that $c_1(S^{\vee}) = \sigma_1^r$, $c_2(S^{\vee}) = \sigma_{2}^r$ and that by (\ref{relcomlg}) we have $(\sigma_1^r)^2 = 2\sigma_{2}^r$. Hence
$$c_1(T_{LG(r,2r)}) = (r+1)\sigma_1^r,\quad c_2(T_{LG(r,2r)}) = (r^2+2r)\sigma_{2}^r.$$
Now, plugging-in these formulas in \cite[Theorem 1.1]{dJS17} we get
$$
K_{\overline{\mathcal{M}}_{0,0}(LG(r,2r),2)} = -\frac{2r+6}{4}\overline{H}_{\sigma_2}^r+\frac{r-7}{4}\overline{\Delta}^r.
$$
Let $\pi:\overline{\mathcal{M}}_{0,0}(LG(r,2r),2)\rightarrow\overline{M}_{0,0}(LG(r,2r),2)$ be the canonical morphism from $\overline{\mathcal{M}}_{0,0}(LG(r,2r),2)$ to its coarse moduli space. Note that $\pi:\overline{\mathcal{M}}_{0,0}(LG(r,2r),2)\rightarrow\overline{M}_{0,0}(LG(r,2r),2)$ is an isomorphism in codimension one for all $r > 2$, while for $r=2$ it is ramified on the divisor $D_{unb}^2$. When $r = 2$ the stack has non trivial inertia along the divisor $\overline{D}_{unb}^2$ since a general stable map in $\overline{D}_{unb}^2$ has automorphism group $\mathbb{Z}/2\mathbb{Z}$. Taking this into account we get that $\pi^{*}D_{unb}^2 = 2\overline{D}_{unb}^2$, and hence Theorem \ref{mcd_lg} yields $\overline{\Delta}^r = 2\overline{H}_{\sigma_2}^r-2\overline{D}_{unb}^r$ if $r > 2$, and $\overline{\Delta}^2 = 2\overline{H}_{\sigma_2}^2-4\overline{D}_{unb}^2$. So, in terms of $\overline{H}_{\sigma_2}^r$ and $\overline{D}_{unb}^r$ the canonical divisor of the stack is given by 
$$K_{\overline{\mathcal{M}}_{0,0}(LG(r,2r),2)} = -5\overline{H}_{\sigma_2}^r-\frac{r-7}{2}\overline{D}_{unb}^r$$
if $r> 2$ , and $K_{\overline{\mathcal{M}}_{0,0}(LG(2,4),2)} = -5\overline{H}_{\sigma_2}^2+5\overline{D}_{unb}^2$.
Furthermore, when $r>2$ the formula above gives the expression of the canonical divisor of $\overline{M}_{0,0}(LG(r,2r),2)$ in the statement since $\overline{M}_{0,0}(LG(r,2r),2)$ and $\overline{\mathcal{M}}_{0,0}(LG(r,2r),2)$ are isomorphic in codimension one for $r > 2$. 

However, when $r = 2$ we have that 
$$K_{\overline{\mathcal{M}}_{0,0}(LG(2,4),2)} = \pi^{*}K_{\overline{M}_{0,0}(LG(2,4),2)}+\overline{D}_{unb}^2.$$ 
Let us write $K_{\overline{M}_{0,0}(LG(2,4),2)} = -5H_{\sigma_2}^2+a D_{unb}^2$. Recalling that $\pi^{*}D_{unb}^2 = 2\overline{D}_{unb}^2$ we get
$$-5\overline{H}_{\sigma_2}^2+5\overline{D}_{unb}^2 = K_{\overline{\mathcal{M}}_{0,0}(LG(2,4),2)} =  \pi^{*}(-5H_{\sigma_2}^2+a D_{unb}^2)+\overline{D}_{unb}^2 = -5\overline{H}_{\sigma_2}^2+(2a+1)\overline{D}_{unb}^2.$$
Hence, $a = 2$ and $K_{\overline{M}_{0,0}(LG(2,4),2)} = -5H_{\sigma_2}^2+2D_{unb}^2$.
\end{proof}

\begin{Remark}
Since $\omega_{\mathbb{G}(1,4)} = \mathcal{O}_{\mathbb{G}(1,4)}(-5)$ and $\codim_{\mathbb{G}(1,4)}(\mathcal{V}_2^3)=3$ the formula $K_{\overline{M}_{0,0}(LG(2,4),2)} = -5H_{\sigma_2}^2+2D_{unb}^2$ can also be deduced from the description of $\overline{M}_{0,0}(LG(2,4),2)$ as the blow-up of $\mathbb{G}(1,4)$ along $\mathcal{V}_2^3$ in Proposition \ref{isor2}.
\end{Remark}

\begin{Corollary}\label{Fano}
The moduli space $\mml$ is Fano for $2\leq r\leq 6$, weak Fano for $r = 7$, and $-K_{\mml}$ is not ample for $r\geq 8$.
\end{Corollary}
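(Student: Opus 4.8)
The plan is to reduce the entire statement to elementary bookkeeping inside the two-dimensional Néron–Severi space of $\mml$, whose nef cone is already completely described. First I would recall from Proposition \ref{can} the anti-canonical class: for $r>2$ we have $-K_{\mml}\sim 5H_{\sigma_2}^r+\frac{r-7}{2}D_{unb}^r$, while for $r=2$ we have $-K_{\mmle}\sim 5H_{\sigma_2}^2-2D_{unb}^2$. By Theorem \ref{mcd_lg} the cone $\Nef(\mml)$ is the closed cone spanned by the two extremal rays $H_{\sigma_2}^r$ and $T^r$. Thus the whole problem is to locate $-K_{\mml}$ relative to this cone: interior means ample (Fano), an extremal ray with bigness means weak Fano, and a position outside the cone means not nef and hence not ample.

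Next I would change basis so that $-K_{\mml}$ is expressed in the nef generators $H_{\sigma_2}^r$ and $T^r$. Using the relations $H_{\sigma_2}^r\sim\frac{1}{2}(\Delta^r+2D_{unb}^r)$ and $T^r\sim\Delta^r+D_{unb}^r$ from Theorem \ref{mcd_lg}, one solves for $D_{unb}^r$ and obtains $D_{unb}^r\sim 2H_{\sigma_2}^r-T^r$. Substituting into the anti-canonical formula for $r>2$ yields
$$-K_{\mml}\sim(r-2)H_{\sigma_2}^r+\tfrac{7-r}{2}T^r,$$
while for $r=2$ the analogous substitution gives $-K_{\mmle}\sim H_{\sigma_2}^2+2T^2$.

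From these two expressions the three regimes fall out by inspection. For $2\le r\le 6$ both coefficients of $-K_{\mml}$ with respect to the extremal rays $H_{\sigma_2}^r,T^r$ are strictly positive (for $r=2$ read off the second formula, for $3\le r\le 6$ the first), so $-K_{\mml}$ lies in the interior of $\Nef(\mml)$ and is ample, i.e.\ $\mml$ is Fano. For $r=7$ the coefficient of $T^r$ vanishes and $-K_{\mml}\sim 5H_{\sigma_2}^7$ sits on the extremal ray generated by $H_{\sigma_2}^7$; it is therefore nef but not ample. For $r\ge 8$ the coefficient $\frac{7-r}{2}$ is negative, so $-K_{\mml}$ lies strictly outside the cone spanned by $H_{\sigma_2}^r$ and $T^r$, hence is not nef and in particular not ample.

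The only point needing more than cone arithmetic is the bigness required for the weak Fano assertion at $r=7$, and this is the main (if minor) obstacle. I would argue that $H_{\sigma_2}^r$ is big because, by Proposition \ref{effneflg}, it induces the birational morphism $f_{H_{\sigma_2}^r}:\mml\to\widetilde{Chow}(LG(r,2r),2)$; thus $H_{\sigma_2}^r$ is the pullback of an ample class under a birational morphism and is therefore big. Consequently $-K_{\mml}\sim 5H_{\sigma_2}^7$ is nef and big, which is precisely the weak Fano condition, completing the case $r=7$ and hence the corollary.
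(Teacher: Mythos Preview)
Your proof is correct and follows essentially the same approach as the paper: both locate $-K_{\mml}$ relative to the nef cone using the anti-canonical formula from Proposition \ref{can} and the description of $\Nef(\mml)$ from Theorem \ref{mcd_lg}. Your version is in fact slightly more explicit than the paper's, which simply asserts that $-K_{\mml}$ lies in the interior of $\Nef(\mml)$ for $2\le r\le 6$, is a multiple of $H_{\sigma_2}^r$ for $r=7$, and lies in the chamber bounded by $H_{\sigma_2}^r$ and $D_{unb}^r$ for $r\ge 8$; you carry out the change of basis and supply the bigness argument for $r=7$ that the paper leaves implicit.
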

\begin{proof}
By Propositions \ref{mcd_lg} and \ref{can} we have that $-K_{\mml}$ is a multiple of $H_{\sigma_2}^r$ if $r = 7$. Furthermore, $-K_{\mml}$ lies in the interior of $\Nef(\mml)$ for $2\leq r\leq 6$, while for $r\geq 8$ we have that $-K_{\mml}$ lies in the interior of the cone generated by $H_{\sigma_2}^r$ and $D_{unb}^r$.
\end{proof}

Finally, the following result on automorphisms of $\mmle$ is at hand. 
\begin{Corollary}\label{aut}
The automorphism group of $\mmle$ is given by
$$\PsAut(\mmle)\cong \Aut(\mmle) \cong PSp(4)$$
where $PSp(4)$ is the projective symplectic group, and $\PsAut(\mmle)$ is the group of birational self-maps of $\mmle$ inducing automorphisms in codimension one. 
\end{Corollary}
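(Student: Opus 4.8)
The plan is to reduce the determination of $\PsAut(\mmle)$ to that of $\Aut(\mmle)$, and then to pin down $\Aut(\mmle)$ by combining the lower bound coming from the $Sp(4)$-action with an upper bound extracted from the blow-up description $\mmle\cong\mathcal{S}_4=\Bl_{\mathcal{V}_2^3}\mathbb{G}(1,4)$ of Propositions \ref{isor2} and \ref{x4g14}.

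First I would prove that $\PsAut(\mmle)=\Aut(\mmle)$. Since $\mmle$ is spherical, hence a Mori dream space, the closure of its movable cone is tiled by the nef cones of its small $\mathbb{Q}$-factorial modifications, and $\PsAut(\mmle)$ acts on $N^1(\mmle)$ permuting these chambers, with $\Aut(\mmle)$ being exactly the stabilizer of the chamber $\Nef(\mmle)$. By Proposition \ref{mcd_4} we have $\Mov(\mmle)=\Nef(\mmle)$, so there is a single chamber and $\mmle$ is its own unique small modification; hence every pseudo-automorphism $f$ satisfies $f^{*}\Nef(\mmle)=\Nef(\mmle)$, pulls back ample classes to ample classes, and is therefore a genuine automorphism.

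Next I would establish $PSp(4)\subseteq\Aut(\mmle)$. The $Sp(4)$-action of (\ref{actKLG}) is regular, and an element acts trivially exactly when it acts trivially on $LG(2,4)$; the kernel of the resulting homomorphism $Sp(4)\to PGL(\wedge^2 K^4)$ restricted to $LG(2,4)$ is the center $\{\pm I\}$, this being the exceptional isogeny $Sp(4)\to SO(5)$, so $PSp(4)=Sp(4)/\{\pm I\}$ acts faithfully. For the reverse inclusion I would use the two nef contractions of Proposition \ref{effneflg}: by Remark \ref{contr4} the morphism $f_{H_{\sigma_2}^2}$ contracts $E_1=D_{unb}^2$ onto the \emph{smooth} variety $\mathbb{G}(1,4)$, while $f_{T^2}$ contracts $\Delta^2=S_2^{(1)}(\mathcal{V}_2^3)$ onto the \emph{singular} $6$-fold $\overline{M}_{0,0}(LG(2,4),2,1)$. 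As these two targets are not isomorphic, no automorphism can exchange the nef rays $H_{\sigma_2}^2$ and $T^2$; every $\phi\in\Aut(\mmle)$ therefore fixes each of them, hence fixes the class of the contracted divisor $E_1$, and, $E_1$ being rigid and extremal in $\Eff(\mmle)$, satisfies $\phi(E_1)=E_1$. Consequently $\phi$ descends along the blow-down $f_{H_{\sigma_2}^2}$ to an automorphism of $\mathbb{G}(1,4)$ preserving $\mathcal{V}_2^3$, and conversely any such automorphism lifts to the blow-up of the smooth center, giving $\Aut(\mmle)\cong\{\,g\in\Aut(\mathbb{G}(1,4)) : g(\mathcal{V}_2^3)=\mathcal{V}_2^3\,\}$.

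The main obstacle is then the group-theoretic computation of this stabilizer. Since $\mathbb{G}(1,4)=\mathbb{G}(2,5)$ is not self-dual, $\Aut(\mathbb{G}(1,4))\cong PGL(5)$. I would identify $\mathcal{V}_2^3$ with the Fano variety $F_1(Q)$ of lines of a smooth quadric threefold $Q\subset\mathbb{P}^4$: this variety of lines is abstractly $\mathbb{P}^3$, the Pl\"ucker line bundle restricts on it to $\mathcal{O}_{\mathbb{P}^3}(2)$, and the resulting image has degree $2^3=8$, matching $\mathcal{V}_2^3$. Because $Q$ is covered by its lines it is reconstructed from $\mathcal{V}_2^3$, so any $g\in PGL(5)$ stabilizing $\mathcal{V}_2^3$ preserves $Q$, whence the stabilizer equals $\Aut(Q)\cong PGO(5)\cong SO(5)$. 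The exceptional isomorphism $PSp(4)\cong SO(5)$, which can also be read off directly from the $Sp(4)$-action on $\Sym^2 K^4\cong\wedge^2 K^5$ carrying $X_4$ to $\mathbb{G}(1,4)$, then identifies this stabilizer with $PSp(4)$, and combining the three steps yields $\PsAut(\mmle)\cong\Aut(\mmle)\cong PSp(4)$. The delicate point to justify carefully is precisely this identification of $\mathcal{V}_2^3$ with $F_1(Q)$ for the quadric fixed by $PSp(4)=SO(5)$, i.e.\ that the $SO(5)$-invariant Veronese threefold is exactly the locus of lines on the invariant quadric.
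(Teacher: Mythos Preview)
Your proof is correct and structurally parallel to the paper's, but the key steps are carried out by genuinely different arguments.

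For $\PsAut(\mmle)=\Aut(\mmle)$ you argue directly from $\Mov(\mmle)=\Nef(\mmle)$, so that $\mmle$ is its own unique small $\mathbb{Q}$-factorial modification. The paper instead invokes that $\mmle$ is smooth Fano (Corollary~\ref{Fano}) and cites \cite[Proposition~7.2]{Ma18a}. Your argument is more self-contained here.

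For ruling out the swap of the two nef rays, you use that the two contractions have non-isomorphic images (one smooth, one singular by Remark~\ref{contr4}); the paper instead observes that $T^2$ has more sections than $H_{\sigma_2}^2$. Both work; yours is perhaps more geometric, the paper's is quicker to verify numerically.

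The main divergence is in computing the stabilizer of $\mathcal{V}_2^3$ inside $\Aut(\mathbb{G}(1,4))\cong PGL(5)$. The paper restricts an automorphism of $\mathbb{G}(1,4)\subset\mathbb{P}^9$ to $\mathcal{V}_2^3\cong\mathbb{P}^3$, shows this gives a surjection onto $PSp(4)$ (using that the induced automorphism of $\mathbb{P}^3$ must preserve symplectic matrices in the ambient description of $X_4$), and checks injectivity because $\mathcal{V}_2^3$ spans $\mathbb{P}^9$. You instead identify $\mathcal{V}_2^3$ with the Fano variety of lines $F_1(Q)$ of the $SO(5)$-invariant smooth quadric $Q\subset\mathbb{P}^4$, so that the stabilizer is $\Aut(Q)\cong PGO(5)\cong SO(5)\cong PSp(4)$ by the exceptional isogeny. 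Your route makes the classical geometry of the quadric threefold do the work and explains conceptually why $PSp(4)$ appears; the paper's route stays closer to the symplectic-matrix model and avoids having to verify that the particular Veronese in question is $F_1(Q)$. You correctly flag this identification as the point needing care; it follows for instance because the $Sp(4)$-action on $\mathbb{G}(1,4)$ factors through the $SO(5)$-action on $\mathbb{P}^4$ via the change of variables in Proposition~\ref{x4g14}, and the unique $SO(5)$-invariant closed subvariety of dimension three in $\mathbb{G}(1,4)$ is $F_1(Q)$.
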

\begin{proof}
By Propositions \ref{x4g14}, \ref{isor2} we have that $\mmle$ is isomorphic to the blow-up of $\mathbb{G}(1,4)$ along the Veronese $\mathcal{V}_2^3$. Let $\varphi\in \Aut(\mmle)$ be an automorphism. Then either $\phi$ preserves the two extremal rays of $\Eff(\mmle)$ in Theorem \ref{mcd_lg} or it swaps them. In the second case $\phi$ must swap also the extremal rays of $\Nef(\mmle)$ but this is not possible since for instance $T^2$ has more sections than $H^2_{\sigma_2}$. Therefore, $\phi$ stabilizes the exceptional divisor $D_{unb}^2$ of the blow-up and then it induces an automorphism $\overline{\phi}$ of $\mathbb{G}(1,4)$ that stabilizes $\mathcal{V}_2^3$.

Now, the automorphism group of $\mathbb{G}(1,4)$ is isomorphic to $PGL(5)$ and all these automorphisms are induced by automorphisms of the ambient projective space $\mathbb{P}^9$ \cite[Theorem 1.1]{Co89}. The restriction of $\overline{\phi}$ to $\mathcal{V}_2^3$ yields an automorphism $\overline{\phi}_{|\mathcal{V}_2^3}$ of $\mathbb{P}^3$. Since $\overline{\phi}$ is an automorphism of $\mathbb{G}(1,4)$, which we interpret as the closure of the space of symplectic and symmetric matrices modulo scalar, the restriction $\overline{\phi}_{|\mathcal{V}_2^3}\in PGL(4)$ must map symplectic matrices to symplectic matrices. Hence, $\overline{\phi}_{|\mathcal{V}_2^3}\in PSp(4)$. So, we get a morphism of groups
$$
\begin{array}{ccll}
\chi :& \Aut(\mmle) & \rightarrow & PSp(4)\\ 
  & \phi & \mapsto & \overline{\phi}_{|\mathcal{V}_2^3}
\end{array}
$$ 
which is surjective. Now, if $\overline{\phi}_{|\mathcal{V}_2^3}$ is the identity it must be the restriction of the identity automorphism of the ambient projective space $\mathbb{P}^9$ in which both $\mathcal{V}_2^3$ and $\mathbb{G}(1,4)$ are embedded. Since  $\mathbb{G}(1,4)$ and $\mmle$ are birational we get that $\overline{\phi}_{|\mathcal{V}_2^3}$ must come from the identity of $\Aut(\mmle)$, and hence $\chi$ is an isomorphism. Finally, since by Proposition \ref{isor2} and Corollary \ref{Fano} $\mmle$ is a smooth Fano variety the result on $\PsAut(\mmle)$ follows from \cite[Proposition 7.2]{Ma18a}.
\end{proof}


\bibliographystyle{amsalpha}
\bibliography{Biblio}

\end{document}